\newtheorem{thm}{Theorem}[section]
\newtheorem{prop}{Proposition}[section]
\newtheorem{lem}{Lemma}[section]
\newtheorem{prob}{Problem}[section]
\newtheorem{algo}{Algorithm}[section]
\newtheorem{assum}{Assumption}[section]
\newcommand{\argmin}{\operatornamewithlimits{argmin}}
\numberwithin{equation}{section}
\begin{document}
\makeatletter

\begin{center}
\large{\bf Almost Sure Convergence of Random Projected Proximal and Subgradient Algorithms for Distributed Nonsmooth Convex Optimization}\\
\small{This work was supported by the Japan Society for the Promotion of Science through a Grant-in-Aid for
Scientific Research (C) (15K04763).}
\end{center}\vspace{3mm}

\begin{center}
\textsc{Hideaki Iiduka}\\
Department of Computer Science, 
Meiji University,
1-1-1 Higashimita, Tama-ku, Kawasaki-shi, Kanagawa 214-8571 Japan. 
(iiduka@cs.meiji.ac.jp)
\end{center}

\vspace{2mm}

\footnotesize{
\noindent\begin{minipage}{14cm}
{\bf Abstract:}
Two distributed algorithms are described that enable all users connected over a network to cooperatively solve the problem of minimizing the sum of all users' objective functions over the intersection of all users' constraint sets, where each user has its own private nonsmooth convex objective function and closed convex constraint set, which is the intersection of a number of simple, closed convex sets. 
One algorithm enables each user to adjust its estimate by using a proximity operator of its objective function and the metric projection onto one set randomly selected from the simple, closed convex sets. 
The other is a distributed random projection algorithm that determines each user's estimate by using a subgradient of its objective function instead of the proximity operator. 
Investigation of the two algorithms' convergence properties for a diminishing step-size rule revealed that, under certain assumptions, the sequences of all users generated by each of the two algorithms converge almost surely to the same solution. 
Moreover, convergence rate analysis of the two algorithms is provided, and desired choices of the step size sequences such that the two algorithms have fast convergence are discussed. 
Numerical comparisons for concrete nonsmooth convex optimization support the convergence analysis and demonstrate the effectiveness of the two algorithms.
\end{minipage}
 \\[5mm]

\noindent{\bf Keywords:} {almost sure convergence, distributed nonsmooth convex optimization, metric projection, proximity operator, random projection algorithm, subgradient}\\
\noindent{\bf Mathematics Subject Classification:} {90C15, 90C25, 90C30}

\hbox to14cm{\hrulefill}\par


\section{Introduction}\label{sec:1}
Future network models have attracted a great deal of attention. The concept of the network model is completely different from that of a conventional client-server network model. While a conventional client-server network model explicitly distinguishes hosts providing services (servers) from hosts receiving services (clients), the network model considered here does not assign fixed roles to hosts. Hosts composing the network, referred to here as users, can be both servers and clients. Hence, the network can function as an {\em autonomous}, {\em distributed}, and {\em cooperative} system. Although there are several forms of networks (e.g., hybrid peer-to-peer networks) in which some operations are intentionally centralized, here, we focus on networks that do not have any centralized operations. Therefore, we need to use distributed mechanisms that can work in cooperation with each user and neighboring users for controlling the network. 

{\em Distributed optimization} (see, e.g., \cite[Chapter 8]{bert}, \cite[Parts 1 and 2]{bert1997}, \cite[Part II]{casa2009}, \cite[PART II]{censor1998}, \cite{nedic}, \cite{sri} and references therein) plays a crucial role in making future networks \cite{cai2007,gold2005,ken2011,pop2012}, such as wireless, sensor, and peer-to-peer networks, stable and highly reliable. One approach \cite[Chapter 5]{ken2011}, \cite{mail}, \cite[Chapter 2]{sri} to reach this goal is to model the utility and strategy of each user as a concave utility function and convex constraint set and solve the problem of maximizing the overall utility of all users over the intersection of all users' constraint sets.
This paper focuses on a constrained convex minimization problem, where each user has its own private {\em nonsmooth convex} objective function (i.e., minus nonsmooth concave utility function) and closed convex constraint set that is the intersection of a number of simple, closed convex sets (e.g., the intersection of affine subspaces, half-spaces, and hyperslabs). The constrained nonsmooth convex optimization problem covers the important situations in which each user's objective function is differentiable with a non-Lipschitz continuous gradient or not differentiable (e.g., the $L^1$-norm) and includes, for instance, the problem of minimizing the total variation of a signal over a convex set, Tykhonov-like problems with $L^1$-norms \cite[I. Introduction]{comb2007}, the classifier ensemble problem with sparsity and diversity learning \cite[Subsection 2.2.3]{yin2014_2}, \cite[Subsection 3.2.4]{yin2014}, which is expressed as $L^1$-norm minimization, and the minimal antenna-subset selection problem \cite[Subsection 17.4]{yamada2011}. The main objective of the present paper is to solve the constrained nonsmooth convex optimization problem including many real-world problems by using distributed optimization techniques.

Two distributed optimization algorithms are presented for solving the constrained convex minimization problem described above. At each iteration of one algorithm, each user first calculates the weighed average of its estimate and the estimates received from its neighboring users and then updates its estimate by using the weighted average, the {\em proximity operator} of its own private nonsmooth convex function, and the metric projection onto one constraint set randomly selected from a number of simple, closed convex sets. The other algorithm is obtained by replacing the proximity operator in the first algorithm with a {\em subgradient} of each user's nonsmooth convex function.

The two algorithms are performed on the basis of a framework \cite[(2a)]{lee2013} on local user communications and random observations \cite[(2b)]{lee2013}, \cite[(2)]{nedic2011} of the local constraints, which ensures that each user can observe one simple, closed convex set onto which the metric projection can be efficiently calculated. Accordingly, the two algorithms can be applied to two complicated cases: (Case 1) Each user does not know the whole form of its private constraint set in advance and can observe only one simple, closed convex set at each instance; (Case 2) Each user knows the whole form of its private constraint set in advance, and the constraint set is the intersection of a huge number of simple, closed convex sets, which means that a metric projection onto the constraint set cannot be calculated easily. See \cite[Section I]{lee2013}, \cite[Section 1]{nedic2011}, and references therein for details on applications of the two cases, including collaborative filtering for recommender systems and text classification problems.

Related random projection algorithms have been proposed for convex optimization over the intersection of a number of closed convex sets. The most relevant to the work in this paper is the first distributed random projected gradient algorithm \cite{lee2013} that was proposed for solving a constrained smooth convex minimization problem when each user's objective function is convex with a Lipschitz continuous gradient. The centralized random projected gradient and subgradient algorithms \cite{nedic2011} were proposed for minimizing a single objective function over the intersection of an arbitrary family of convex inequalities. The incremental constraint projection-proximal algorithm \cite{wang2015} uses both random subgradient updates and random constraint updates. While there have been no reports on distributed random projection algorithms for nonsmooth convex optimization, thanks to the useful ideas in \cite{lee2013,wang2015}, we can devise a distributed random projected proximal algorithm (Algorithm \ref{algo:2}). Moreover, on the basis of \cite{lee2013,nedic2011}, we can devise a distributed random projected subgradient algorithm (Algorithm \ref{algo:1}) that is a generalization of the first distributed random projected gradient algorithm \cite[(2a), (2b)]{lee2013}. 

This paper makes two contributions that build on previously reported results. 
It presents two novel distributed random projection algorithms for constrained nonsmooth convex optimization that are based on each user's local communications. This means that they can be implemented independently of the network topology and that each user can calculate the weighted average of its estimate and the neighboring users' estimates. The algorithms proceed by performing a proximal or subgradient step for each user's objective function at the weighted average and projecting onto one simple, closed convex set that is randomly selected from each user's local constraint sets. Since the metric projection is a special case of nonexpansive mapping, the algorithms are connected to previous fixed point optimization algorithms \cite{iiduka_siopt2013,iiduka_mp2014,iiduka_mp2015,iiduka_hishinuma_siopt2014} for convex optimization over fixed point sets of nonexpansive mappings. The previous algorithms work for convex optimization over the intersection of convex constraint sets that are not always simple. However, since they work only when each user makes the best use of its own private information, they cannot be applied to the case where each user does not know the explicit form of its constraint set in advance. In contrast, the proposed algorithms work even when each user randomly sets one projection selected from many projections (see also (Case 1)). Therefore, the proposed algorithms have wider applications than previous fixed point optimization algorithms.
 
The other contribution is an analysis of the proposed proximal and subgradient algorithms. In contrast to the convergence analysis \cite{lee2013} of the first distributed random projected gradient algorithm, smooth convex analysis, which has tractable properties due to the use of Lipschitz continuous gradients, cannot be applied to the convergence analyses of the two proposed algorithms, which optimize nonsmooth convex functions. However, convergence analyses of the two algorithms can be performed by using useful properties \cite[Propositions 12.16, 12.27, and 16.14]{b-c} (Proposition \ref{prop:1}) of the proximity operators and the subgradients of nonsmooth convex objective functions. Thanks to the supermartingale convergence theorem \cite[Proposition 8.2.10]{bert} (Proposition \ref{prop:2}) and the portmanteau lemma \cite[Lemma 2.2]{vaa2007} (Proposition \ref{prop:3}), it is guaranteed that, under certain assumptions, the sequences of all users generated by each of the two algorithms converge almost surely to the same solution to the constrained nonsmooth convex optimization problem considered in this paper (Theorems \ref{thm:2} and \ref{thm:1}). Moreover, the rates of convergence of the two algorithms (Proposition \ref{prop:thm:2}, \eqref{rate}, Proposition \ref{prop:thm:1}, and \eqref{rate1}) are provided to illustrate the two algorithms' efficiency. The convergence rate analysis leads to the choices of the step size sequences such that the two algorithms have fast convergence. The numerical results for the two algorithms are also provided to support the convergence and convergence rate analyses.

This paper is organized as follows. Section \ref{sec:2} gives the mathematical preliminaries and states the main problem. Section \ref{sec:4} presents the proposed random projected proximal algorithm for solving the main problem and describes its convergence properties for a diminishing step size. Section \ref{sec:3} presents the proposed random projected subgradient algorithm for solving the main problem and describes its convergence properties for a diminishing step size. Section \ref{sec:5} considers concrete nonsmooth convex optimization and numerically compares the behaviors of the two algorithms. Section \ref{sec:6} concludes the paper with a brief summary and mentions future directions for improving the proposed algorithms.

\section{Preliminaries}\label{sec:2}
\subsection{Definitions and propositions}\label{subsec:2.1}
Let $\mathbb{R}^d$ be a $d$-dimensional Euclidean space with inner product $\langle \cdot,\cdot \rangle$ and its induced norm $\| \cdot \|$, and let $\mathbb{R}_+^d := \{(x_1,x_2,\ldots,x_d) \in \mathbb{R}^d \colon x_i \geq 0 \text{ } (i=1,2,\ldots,d) \}$. Let $[W]_{ij}$ denote the $(i,j)$th entry of a matrix $W$. Let $\mathrm{Pr}\{X\}$ and $\mathrm{E}[X]$ denote the probability and the expectation of a random variable $X$.

The metric projection onto a nonempty, closed convex set $C \subset \mathbb{R}^d$ is denoted by $P_C$,
and it is defined for all $x\in \mathbb{R}^d$ by $P_C(x)\in C$ and
\begin{align*}
\left\| x - P_C(x)  \right\| = \mathrm{d}\left(x,C\right) := \inf \left\{\left\| x-y \right\| \colon y\in C \right\}. 
\end{align*}
The mapping $P_C$ satisfies the firm nonexpansivity condition \cite[Proposition 4.8]{b-c}; i.e., 
$\| P_C(x) - P_C(y) \|^2 + \| (x-P_C(x)) - (y-P_C(y)) \|^2 \leq \|x-y\|^2$ $(x,y \in \mathbb{R}^d)$.
The subdifferential \cite[Definition 16.1]{b-c} of $f \colon \mathbb{R}^d \to \mathbb{R}$ is the set-valued operator 
defined for all $x\in \mathbb{R}^d$ by 
\begin{align*}
\partial f \left(x \right) := \left\{ u\in \mathbb{R}^d \colon f\left(y\right) \geq f\left(x\right) + \langle y-x,u\rangle
\text{ } \left(y\in \mathbb{R}^d \right)  \right\}.
\end{align*}
The proximity operator \cite[Definition 12.23]{b-c} of a convex function $f \colon \mathbb{R}^d \to \mathbb{R}$, denoted by $\mathrm{prox}_f$, maps every $x\in \mathbb{R}^d$ to the unique minimizer of 
$f(\cdot) + (1/2) \| x - \cdot \|^2$; i.e.,
\begin{align*}
\mathrm{prox}_f (x) \in  \argmin_{y\in \mathbb{R}^d} \left[ f(y) + \frac{1}{2} \left\|x-y\right\|^2  \right]
\text{ } \left(x\in \mathbb{R}^d \right).
\end{align*}
The uniqueness and existence of $\mathrm{prox}_f(x)$ are guaranteed for all $x\in \mathbb{R}^d$ \cite[Definition 12.23]{b-c}.

\begin{prop}\label{prop:1}
Let $f\colon \mathbb{R}^d \to \mathbb{R}$ be convex.
Then, the following hold:
\begin{enumerate}
\item[{\em (i)}]
$\partial f(x)$ is nonempty for all $x\in \mathbb{R}^d$.
\item[{\em(ii)}]
Let $x,p\in \mathbb{R}^d$. $p = \mathrm{prox}_f (x)$ if and only if $x-p \in \partial f (p)$
(i.e., $\langle y-p,x-p  \rangle + f(p) \leq f(y)$ for all $y\in \mathbb{R}^d$).
\item[{\em(iii)}]
$\mathrm{prox}_f$ is firmly nonexpansive. 
\item[{\em (iv)}]
Let $L > 0$.
Then, $f$ is Lipschitz continuous with a Lipschitz constant $L$ 
if and only if $\|u\| \leq L$ for all $x\in \mathbb{R}^d$ and for all $u\in \partial f(x)$. 
\end{enumerate}
\end{prop}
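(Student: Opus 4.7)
The plan is to treat the four parts separately, since each is a standard fact in convex analysis but rests on the subgradient inequality in a distinct way. Throughout, the key point is that $f$ is convex and finite-valued on all of $\mathbb{R}^d$, hence automatically continuous, so no constraint-qualification subtleties arise.

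For (i) I would argue via a supporting hyperplane for the epigraph $\mathrm{epi}\, f$. Since $f$ is finite and continuous, $\mathrm{epi}\, f$ is a closed convex set in $\mathbb{R}^{d+1}$, and $(x,f(x))$ lies on its boundary. A supporting hyperplane at $(x,f(x))$ has a normal that cannot be purely horizontal (otherwise it would separate $(x, f(x)+1)$ from the epigraph, contradicting that point's membership), so after normalization it has the form $(u,-1)$, and the supporting-hyperplane inequality translates exactly into $f(y)\geq f(x)+\langle y-x,u\rangle$ for all $y$; thus $u\in\partial f(x)$.

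For (ii) I would use Fermat's rule applied to the strongly convex function $\phi(y):=f(y)+(1/2)\|x-y\|^2$. By uniqueness, $p=\mathrm{prox}_f(x)$ iff $p$ minimizes $\phi$, iff $0\in\partial\phi(p)$. Because $y\mapsto(1/2)\|x-y\|^2$ is finite and differentiable everywhere with gradient $p-x$ at $p$, the sum rule for subdifferentials applies without qualification, yielding $0\in\partial f(p)+(p-x)$, i.e.\ $x-p\in\partial f(p)$. The parenthetical variational inequality is then just the subgradient inequality at $p$ with the subgradient $x-p$. For (iii) I would combine (ii) with the subgradient inequality twice: setting $p=\mathrm{prox}_f(x)$ and $q=\mathrm{prox}_f(y)$, apply $f(q)\geq f(p)+\langle q-p,x-p\rangle$ and $f(p)\geq f(q)+\langle p-q,y-q\rangle$, add, and simplify to obtain $\|p-q\|^2\leq\langle p-q,x-y\rangle$, which by Cauchy--Schwarz implies $\|p-q\|\leq\|x-y\|$ and, via the standard identity, yields the firm nonexpansivity inequality in the form stated for $P_C$.

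For (iv), in the direction ``bounded subgradients $\Rightarrow$ Lipschitz,'' I would pick arbitrary $x,y\in\mathbb{R}^d$, take $u\in\partial f(x)$ and $v\in\partial f(y)$ (nonempty by (i)), and combine the two subgradient inequalities $f(y)\geq f(x)+\langle y-x,u\rangle$ and $f(x)\geq f(y)+\langle x-y,v\rangle$ with Cauchy--Schwarz and $\|u\|,\|v\|\leq L$ to sandwich $f(y)-f(x)$ between $\pm L\|y-x\|$. For the converse, given $u\in\partial f(x)$ with $u\neq 0$, substitute $y=x+tu/\|u\|$ ($t>0$) into the subgradient inequality to get $f(y)-f(x)\geq t\|u\|$, then use the Lipschitz estimate $f(y)-f(x)\leq Lt$ to conclude $\|u\|\leq L$; the case $u=0$ is trivial. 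The only mild obstacle I anticipate is justifying the subdifferential sum rule used in (ii), but this is immediate here because the quadratic term is everywhere finite and differentiable, so I do not expect any technical complications; indeed all four items are standard results in \cite[Chapter 12, 16]{b-c}.
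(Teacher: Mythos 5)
Your proposal is correct in substance, but it diverges from the paper in an instructive way: the paper proves only (iv) from scratch and disposes of (i)--(iii) by citing Propositions 16.14(ii), 12.26, and 12.27 of \cite{b-c}, whereas you reprove all three by the standard textbook route (supporting hyperplane to $\mathrm{epi}\, f$ for (i), Fermat's rule plus the subdifferential sum rule for (ii), and the characterization in (ii) combined with two subgradient inequalities for (iii)). Your reduction of firm nonexpansivity to $\|p-q\|^2\le\langle p-q,x-y\rangle$ is indeed equivalent to the form stated for $P_C$, so (iii) is fine. For (iv) your argument is essentially the paper's: the forward direction is identical (two subgradient inequalities plus Cauchy--Schwarz), and in the converse you substitute $y=x+tu/\|u\|$ and let the Lipschitz bound force $\|u\|\le L$, which is a direct version of the paper's contradiction argument with $t=1$. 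What your route buys is self-containedness; what the paper's buys is brevity, since these are exactly the cited results in \cite{b-c}.

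One local justification in (i) does not work as written: if the supporting hyperplane at $(x,f(x))$ had a purely horizontal normal $(w,0)$, the point $(x,f(x)+1)$ would lie \emph{on} that hyperplane (its horizontal coordinate is $x$), so it is not separated from the epigraph and no contradiction arises from that point. The correct reason the normal cannot be horizontal is that $f$ is finite on all of $\mathbb{R}^d$: the supporting inequality with normal $(w,0)$ reads $\langle w,y-x\rangle\le 0$ for every $y\in\mathbb{R}^d$ (apply it to $(y,f(y))\in\mathrm{epi}\, f$), which forces $w=0$ and contradicts the nontriviality of the normal. With this repair, the rest of your argument for (i), and the remaining parts, go through.
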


\begin{proof}
(i)--(iii)
Propositions 16.14(ii) and 12.26 in \cite{b-c} lead to (i) and (ii).
Moreover, Proposition 12.27 in \cite{b-c} implies (iii). 

(iv)
Let $x,y\in \mathbb{R}^d$ and choose $u_x \in \partial f(x)$ and $u_y \in \partial f(y)$.
The definition of $\partial f$ guarantees that 
$\langle x-y, u_x \rangle \geq f(x) - f(y) \geq \langle x-y,u_y \rangle$, which, together with the Cauchy-Schwarz inequality, 
implies that $\| u_x \| \|x-y\| \geq f(x) - f(y) \geq - \| u_y \| \|x-y\|$.
If $\|u\| \leq L$ for all $z\in\mathbb{R}^d$ and for all $u\in \partial f(z)$, 
$f$ is Lipschitz continuous. 

Suppose that $f$ is Lipschitz continuous with a constant $L$ and assume that there exist $x\in \mathbb{R}^d$ and $u\in\partial f(x)$ such that $\|u\| > L$. Define $y := x + u/\|u\|$.
The definition of $\partial f$ means that $f(y) \geq f(x) + \langle y-x,u \rangle = f(x) + \|u\| > f(x) + L$.
Hence, we have a contradiction:
$L < f(y) - f(x) \leq L \|x-y\| = L$. 
Accordingly, $\|u\| \leq L$ for all $x\in \mathbb{R}^d$ and for all $u\in \partial f(x)$.
\qed
\end{proof}

The following propositions are needed to prove the main theorems.

\begin{prop}\label{prop:2}{\em [The supermartingale convergence theorem \cite[Proposition 8.2.10]{bert}]}
Let $(Y_k)_{k\geq 0}$, $(Z_k)_{k\geq 0}$, and $(W_k)_{k\geq 0}$ be sequences of nonnegative random variables and 
let $\mathcal{F}_k$ ($k\geq 0$) denote the collection $Y_0, Y_1, \ldots, Y_k$, $Z_0, Z_1, \ldots, Z_k$, and $W_0, W_1, \dots, W_k$.
Suppose that $\sum_{k=0}^\infty W_k < \infty$ almost surely and that almost surely, for all $k\geq 0$, 
\begin{align*}
\mathrm{E} \left[ Y_{k+1} | \mathcal{F}_k  \right] \leq Y_k - Z_k + W_k.
\end{align*}
Then, $\sum_{k=0}^\infty Z_k < \infty$ almost surely and $(Y_k)_{k\geq 0}$ converges almost surely to a nonnegative random variable $Y$.
\end{prop}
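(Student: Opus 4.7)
The plan is to convert the hypothesis into a supermartingale inequality and then apply the classical nonnegative supermartingale convergence theorem, using a stopping-time localization to tame the cumulative $W_j$-sum. First, I would introduce the auxiliary process
\begin{equation*}
U_k := Y_k + \sum_{j=0}^{k-1} (Z_j - W_j),
\end{equation*}
which is $(\mathcal{F}_k)$-adapted. A one-line calculation using $\mathrm{E}[Y_{k+1}\mid\mathcal{F}_k] \leq Y_k - Z_k + W_k$ yields $\mathrm{E}[U_{k+1}\mid\mathcal{F}_k] \leq U_k$, so $(U_k)$ is a supermartingale.

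The obstacle is that $U_k$ is not manifestly bounded below: the term $-\sum_{j<k} W_j$ can be arbitrarily negative in principle, and no $L^1$-control of $\sum W_j$ is provided by the hypothesis, so neither the nonnegative nor the $L^1$-bounded supermartingale convergence theorem applies directly to $U_k$. To bypass this, I would introduce the stopping times
\begin{equation*}
\tau_N := \inf\left\{k \geq 0 : \sum_{j=0}^{k} W_j > N\right\} \quad (N \in \mathbb{N}),
\end{equation*}
which are $(\mathcal{F}_k)$-stopping times because each $W_j$ is $\mathcal{F}_j$-measurable. By the defining property of $\tau_N$, on $\{k \leq \tau_N\}$ one has $\sum_{j=0}^{k-1} W_j \leq N$, so the stopped and shifted process satisfies
\begin{equation*}
U_{k \wedge \tau_N} + N \;\geq\; Y_{k \wedge \tau_N} + \sum_{j=0}^{(k \wedge \tau_N)-1} Z_j \;\geq\; 0.
\end{equation*}
Since stopping preserves the supermartingale property, $(U_{k \wedge \tau_N} + N)_{k \geq 0}$ is a nonnegative supermartingale and hence converges almost surely.

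Because $\sum_{j=0}^\infty W_j < \infty$ almost surely, $\tau_N \to \infty$ almost surely as $N \to \infty$; on $\{\tau_N = \infty\}$ the stopped and unstopped trajectories coincide, so taking the union over $N$ gives almost sure convergence of $U_k$ itself. Combining this with the almost sure finiteness of $\sum_{j=0}^\infty W_j$ yields almost sure convergence of $Y_k + \sum_{j=0}^{k-1} Z_j$. Since $Y_k \geq 0$ and the partial sums of $Z_j$ are nondecreasing, finiteness of the limit forces $\sum_{j=0}^\infty Z_j < \infty$ almost surely, which in turn gives $Y_k \to Y$ almost surely for some nonnegative random variable $Y$.

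The main technical obstacle is the one flagged above: with only one-step conditional control of $Y_{k+1}$, the accumulated $W_j$ in the definition of $U_k$ cannot be absorbed globally. The localization by $\tau_N$ is the essential maneuver, replacing an ill-defined global shift by the deterministic constant $N$ and then exhausting the probability space as $N \to \infty$; once this reduction is in place, everything else is the classical nonnegative supermartingale convergence theorem together with the monotonicity of $\sum_{j<k} Z_j$.
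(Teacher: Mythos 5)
Your argument is correct, but note that the paper does not prove this proposition at all: it is quoted verbatim as a known result, with the proof delegated to the cited reference (Bertsekas, Nedi\'c, and Ozdaglar, Proposition 8.2.10), and it is then used as a black box in the proofs of Lemmas 3.2 and 4.2 and of the two main theorems. What you have supplied is a self-contained proof, and it is the classical Robbins--Siegmund argument: compensate $Y_k$ by $\sum_{j<k}(Z_j-W_j)$ to get a supermartingale $U_k$, localize with the stopping times $\tau_N=\inf\{k\colon \sum_{j\le k}W_j>N\}$ so that $U_{k\wedge\tau_N}+N$ is a nonnegative supermartingale, let $N\to\infty$ using $\sum_k W_k<\infty$ a.s.\ to recover a.s.\ convergence of $U_k$, and then read off $\sum_k Z_k<\infty$ and the convergence of $Y_k$ from monotonicity of the $Z$-partial sums. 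Each step checks out: $\tau_N$ is a stopping time because $\{\tau_N\le k\}=\{\sum_{j\le k}W_j>N\}\in\mathcal{F}_k$, the stopped process is indeed bounded below by $-N$ since $\sum_{j\le (k\wedge\tau_N)-1}W_j\le N$, and the final bookkeeping with the nondecreasing sums is sound. The only caveat worth flagging is that invoking the classical convergence theorem for the nonnegative supermartingale $U_{k\wedge\tau_N}+N$ tacitly uses integrability of the variables (e.g.\ $\mathrm{E}[Y_0]<\infty$, and finiteness of the relevant expectations at each step); this assumption is equally implicit in the statement as quoted in the paper, so it is a shared convention rather than a gap in your argument.
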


\begin{prop}{\em [The portmanteau lemma \cite[Lemma 2.2]{vaa2007}]}\label{prop:3}
Let $(Y_k)_{k\geq 0}$ be a sequence of random variables that converges in law to a random variable $Y$.
Then, $\limsup_{k\to\infty} \mathrm{Pr} \{ Y_k \in F \} \leq \mathrm{Pr} \{ Y \in  F\}$ for every closed set $F$.
\end{prop}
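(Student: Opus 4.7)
The plan is to bound the closed-set probability from above by the expectation of a bounded continuous function that approximates the indicator of $F$, then invoke the defining property of convergence in law; the closedness of $F$ enters only at the final squeeze.

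First, for each $\epsilon > 0$ I would introduce the Lipschitz function $f_\epsilon(x) := \max\{0, 1 - \epsilon^{-1} \mathrm{d}(x, F)\}$, which takes values in $[0,1]$, equals $1$ on $F$, and vanishes outside the open $\epsilon$-neighborhood $F^\epsilon := \{x \in \mathbb{R}^d \colon \mathrm{d}(x, F) < \epsilon\}$. Since $f_\epsilon$ is bounded and continuous, the hypothesis that $Y_k$ converges in law to $Y$ yields $\mathrm{E}[f_\epsilon(Y_k)] \to \mathrm{E}[f_\epsilon(Y)]$. Writing $\chi_A$ for the indicator of a set $A$, the pointwise sandwich $\chi_F \leq f_\epsilon \leq \chi_{F^\epsilon}$ gives $\mathrm{Pr}\{Y_k \in F\} \leq \mathrm{E}[f_\epsilon(Y_k)]$ and $\mathrm{E}[f_\epsilon(Y)] \leq \mathrm{Pr}\{Y \in F^\epsilon\}$, so taking $\limsup_{k\to\infty}$ on the left-hand side and chaining these bounds produces
\begin{equation*}
\limsup_{k\to\infty} \mathrm{Pr}\{Y_k \in F\} \leq \mathrm{Pr}\{Y \in F^\epsilon\}.
\end{equation*}

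Second, I would let $\epsilon \downarrow 0$. Because $F$ is closed, any point in $\bigcap_{\epsilon > 0} F^\epsilon$ has distance zero from $F$ and therefore belongs to $F$, so $\bigcap_{\epsilon > 0} F^\epsilon = F$. The events $\{Y \in F^\epsilon\}$ form a decreasing family of finite-measure sets (each bounded by $1$), hence continuity from above of the probability measure yields $\mathrm{Pr}\{Y \in F^\epsilon\} \downarrow \mathrm{Pr}\{Y \in F\}$, which delivers the desired inequality.

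The argument is essentially elementary, and the only subtle point is verifying that $f_\epsilon$ is an admissible test function for the working definition of convergence in law. Under the standard characterization that expectations against bounded continuous functions converge this is immediate, since $f_\epsilon$ is Lipschitz with constant $1/\epsilon$ and bounded by $1$. The main obstacle I anticipate is purely notational, namely keeping the order of the limits in $k$ and in $\epsilon$ straight so that the two are taken successively rather than simultaneously.
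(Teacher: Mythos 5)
Your proof is correct: the Lipschitz approximation $f_\epsilon(x) = \max\{0,\,1-\epsilon^{-1}\mathrm{d}(x,F)\}$ of the indicator of the closed set, followed by the limit in $k$ and then $\epsilon \downarrow 0$ via continuity from above, is the standard portmanteau argument. The paper itself gives no proof of this proposition --- it is quoted directly from the cited reference (van der Vaart, Lemma 2.2) --- and your argument is essentially the same one used there, so there is nothing to reconcile.
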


A directed graph $G := (V,E)$ is a finite nonempty set $V$ of nodes (users) and a collection $E$ of ordered pairs of distinct nodes from $V$ \cite[p. 394]{b-g1987}.
A directed graph is said to be strongly connected if, for each pair of nodes $i$ and $l$, there exists a directed path from $i$ to $l$
\cite[p. 394]{b-g1987}.

\subsection{Main problem and assumptions} 
Let us consider a constrained nonsmooth convex optimization problem that is distributed over a network of $m$ users, indexed by 
\begin{align*}
V := \left\{1,2,\ldots,m \right\}.
\end{align*}
User $i$ $(i\in V)$ has its own private function $f_i \colon \mathbb{R}^d \to \mathbb{R}$ and constraint set $X_i \subset \mathbb{R}^d$.
On the basis of \cite[Section II]{lee2013}, let us define the constraint set $X := \bigcap_{i=1}^m X_i$
for the whole network. 
Suppose that $X$ is the intersection of $n$ closed convex sets. 
Let $I := \{ 1,2,\ldots,n \}$, and let $I_i$ $(i\in V)$ be the partition of $I$ such that $I = \bigcup_{i=1}^m I_i$ and $I_i \cap I_l = \emptyset$ for all $i,l\in V$ with $i\neq l$.
Then, we can define $X_i$ by the intersection of closed convex sets $X_i^j$ $(j\in I_i)$; i.e.,
\begin{align*}
X_i := \bigcap_{j\in I_i} X_i^j \text{ } \left(i\in V \right) \text{ and } 
X := \bigcap_{i\in V} X_i = \bigcap_{i=1}^m \bigcap_{j\in I_i} X_i^j.
\end{align*}
Throughout this paper, the following is assumed.

\begin{assum}\label{assum:1}
Suppose that 
\begin{enumerate}
\item[{\em(A1)}]
$X_i^j \subset \mathbb{R}^d$ ($i\in V, j\in I_i$) is a closed convex set onto which the metric projection $P_{X_i^j}$ can be efficiently computed and $X := \bigcap_{i\in V} X_i \neq \emptyset$;
\item[{\em(A2)}]
$f_i \colon \mathbb{R}^d \to \mathbb{R}$ ($i\in V$) is convex;
\item[{\em(A3)}]
For all $i\in V$, there exists $M_i \in \mathbb{R}$ such that $\sup \{ \| g_i \| \colon x\in X_i, g_i \in \partial f_i (x) \} \leq M_i$.
\end{enumerate}
\end{assum}
Assumption (A3) is satisfied if $f_i$ $(i\in V)$ is polyhedral on $X_i$ or $X_i$ $(i\in V)$ is bounded 
\cite[p. 471]{bert}.
Proposition \ref{prop:1}(iv) guarantees that, if $f_i$ $(i\in V)$ is Lipschitz continuous on $X_i$, Assumption (A3) holds.

The following problem is discussed in the paper.
\begin{prob}\label{prob:1}
Under Assumption \ref{assum:1}, 
\begin{align*}
\text{minimize } f \left(x \right) := \sum_{i\in V} f_i \left(x\right) \text{ subject to } x \in X := \bigcap_{i\in V} X_i, 
\end{align*}
where one assumes that Problem \ref{prob:1} has a solution.
\end{prob}

The solution set of Problem \ref{prob:1} is denoted by
\begin{align*}
X^\star := \left\{ x^\star \in X \colon f \left(x^\star \right) = f^\star := \inf_{x\in X} f\left(x\right)  \right\}.
\end{align*}
The condition $X^\star \neq \emptyset$ holds, for example, when one of $X_i^j$s is bounded \cite[Corollary 8.31, Proposition 11.14]{b-c}.

The main objective of this paper is to present distributed optimization algorithms that enable each user to solve Problem \ref{prob:1} without using other users' private information. To address this goal, we assume that each user and its neighboring users form a network in which each user can transmit its estimate to its neighboring users. The network topology at time $k$ is expressed as a directed graph $G(k) := (V, E(k))$, where $E(k) \subset V \times V$ and $(i,j) \in E(k)$ stands for a link such that user $i$ receives information from user $j$ at time $k$. Let $N_i(k) \subset V$ be the set of users that send information to user $i$; i.e., 
\begin{align*}
N_i\left(k\right) := \left\{ j\in V \colon \left(i,j \right) \in E \left(k\right) \right\}
\end{align*}
and $i\in N_i(k)$ $(i\in V, k \geq 0)$. To consider Problem \ref{prob:1}, we need the following assumption \cite[Assumption 4]{lee2013}, which leads to the result \cite[Theorem 4.2]{ram2012} (Lemma \ref{lem:4.3}) used to prove the main theorems.
\begin{assum}\label{assum:4}
There exists $Q \geq 1$ such that the graph $(V, \bigcap_{l=0}^{Q-1} E(k+l))$ is strongly connected for all $k \geq 0$.
\end{assum}

Assumptions (A1) and (A2) imply that, for all $k \geq 0$, user $i$ $(i\in V)$ can determine its estimate by using a subgradient or proximity operator of $f_i$ and the metric projection onto a certain constraint set selected from its own constraint sets $X_i^{j}$ $(j\in I_i)$. This paper assumes that user $i$ $(i\in V)$ forms the metric projection on the basis of random observations of the local constraints; i.e., user $i$ observes a local constraint set at time $k$,
\begin{align*}
X_i^{\Omega_i \left(k\right)}, \text{ where } \Omega_i \left(k\right) \in I_i \text{ is a random variable,}
\end{align*}
and thus uses $P_{X_i^{\Omega_i (k)}}$.
To perform our convergence analyses, we assume the following \cite[Assumptions 2 and 3]{lee2013}:
\begin{assum}\label{assum:2}
The random sequences $(\Omega_i(k))_{k\geq 0}$ ($i\in V$) are independent and identically distributed
and independent of the initial points $x_i(0)$ ($i\in V$) in the algorithms considered in the paper.
Moreover, $\mathrm{Pr}\{ \Omega_i(k)=j \} > 0$ holds for $i\in V$ and for $j\in I_i$.
\end{assum}

\begin{assum}\label{assum:3}
There exists $c > 0$ such that, for all $i\in V$, for all $x\in \mathbb{R}^d$, and for all $k \geq 0$,
\begin{align*}
\mathrm{d} \left(x,X\right)^2 \leq c \mathrm{E} \left[ \mathrm{d} \left(x,X_i^{\Omega_i\left(k\right)} \right)^2  \right].
\end{align*}
\end{assum}
Assumption \ref{assum:3} holds when $X_i^j$ $(i\in V, j\in I_i)$ is a linear inequality or equality constraint,
or when $X$ has an interior point (see \cite[p. 223]{lee2013} and references therein).

We also make the following assumption \cite[Assumption 5]{lee2013} to present our algorithms.

\begin{assum}\label{assum:5}
For $k\geq 0$, user $i$ ($i\in V$) has the weighted parameters $w_{ij}(k)$ ($j\in V$) satisfying the following:
\begin{enumerate}
\item[{\em (i)}]
$w_{ij}(k) := [W(k)]_{ij} \geq 0$ for all $j\in V$ and $w_{ij}(k) = 0$ when $j\notin N_i(k)$; 
\item[{\em (ii)}]
There exists $w \in (0,1)$ such that $w_{ij}(k) \geq w$ for all $j\in N_i(k)$.
\item[{\em (iii)}]
$\sum_{j\in V} [W(k)]_{ij} = 1$ for all $i\in V$ and 
$\sum_{i\in V} [W(k)]_{ij} = 1$ for all $j\in V$.
\end{enumerate}
Moreover, user $i$ ($i\in V$) also has the step size sequence $(\alpha_k)_{k\geq 0} \subset (0,\infty)$ satisfying 
\begin{align*}
\text{{\em (C1)} } \sum_{k=0}^\infty \alpha_k = \infty \text{ and {\em (C2) }} \sum_{k=0}^\infty \alpha_k^2 < \infty.
\end{align*}
\end{assum}

Let $x_i(k) \in \mathbb{R}^d$ be the estimate of user $i$ at time $k$ (see \eqref{xiprox} and \eqref{xi} in Algorithms \ref{algo:2} and \ref{algo:1} for details of the definition of $(x_i(k))_{k\geq 0}$ $(i\in V)$).
To analyze the proposed algorithms, we use the expectation taken with respect to the past history of the algorithms 
defined as follows \cite[p. 224]{lee2013}.
Let $\mathcal{F}_k$ be the $\sigma$-algebra generated by the entire history of the algorithms up to time $(k-1)$ inclusively;
i.e., for all $k \geq 1$,
\begin{align}\label{F}
\mathcal{F}_k := \left\{ x_i \left(0\right) \colon i\in V   \right\} \cup \left\{ \Omega_i\left(l\right) \colon l \in \left[0,k-1\right], i\in V \right\},
\end{align}
where $\mathcal{F}_0 := \{ x_i(0) \colon i\in V \}$.

\section{Distributed random projected proximal algorithm}\label{sec:4}
This section presents the following proximal algorithm with random projections for solving Problem \ref{prob:1}
under Assumptions \ref{assum:1}--\ref{assum:5}.

\begin{algo}\label{algo:2}
\text{ }

\begin{enumerate}
\item[Step 0.]
User $i$ ($i\in V$) sets $x_i(0) \in \mathbb{R}^d$ arbitrarily. 
\item[Step 1.]
User $i$ ($i\in V$) receives $x_j(k)$ from its neighboring users $j\in N_i(k)$ and computes the weighted average 
\begin{align}\label{viprox}
v_i\left(k\right) := \sum_{j\in N_i\left(k\right)} w_{ij}\left(k\right) x_j\left(k\right).
\end{align}
User $i$ updates its estimate $x_i(k+1)$ by 
\begin{align}\label{xiprox}
x_i\left( k+1\right) := P_{X_i^{\Omega_i\left(k\right)}} \left( \mathrm{prox}_{\alpha_k f_i} \left(v_i\left(k\right) \right) \right).
\end{align}
The algorithm sets $k:= k+1$ and returns to Step 1.
\end{enumerate}
\end{algo}
The definition \eqref{F} implies that, given $\mathcal{F}_k$ $(k\geq 0)$, the collection $x_i(0), x_i(1), \ldots, x_i(k)$ and $v_i(0), v_i(1), \ldots, v_i(k)$ generated by Algorithm \ref{algo:2} is fully determined. Algorithm \ref{algo:2} enables user $i$ $(i\in V)$ to determine $x_i(k+1)$ by using its proximity operator at the wighted average $v_i(k)$ of the received $x_j(k)$ $(j\in N_i(k))$ and the metric projection onto a constraint set $X_i^{\Omega_i(k)}$ randomly selected from $X_i^j$. 

The following is a convergence analysis of Algorithm \ref{algo:2}.

\begin{thm}\label{thm:2}
Under Assumptions \ref{assum:1}--\ref{assum:5}, the sequence $(x_i(k))_{k\geq 0}$ ($i\in V$) generated by Algorithm \ref{algo:2}
converges almost surely to a random point $x^\star \in X^\star$.
\end{thm}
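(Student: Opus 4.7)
Following the framework of Lee--Nedi\'c (2013), I would substitute Proposition \ref{prop:1}(ii)--(iii) for the smooth gradient step. Fix $x^\star\in X^\star$ and write $z_i(k):=\mathrm{prox}_{\alpha_k f_i}(v_i(k))$, so $x_i(k+1)=P_{X_i^{\Omega_i(k)}}(z_i(k))$. Since $x^\star\in X\subset X_i^{\Omega_i(k)}$, firm nonexpansivity of the projection gives $\|x_i(k+1)-x^\star\|^2\leq\|z_i(k)-x^\star\|^2-\mathrm{d}(z_i(k),X_i^{\Omega_i(k)})^2$, and Proposition \ref{prop:1}(ii) with $y=x^\star$ yields
\begin{align*}
\|z_i(k)-x^\star\|^2 \leq \|v_i(k)-x^\star\|^2 - \|v_i(k)-z_i(k)\|^2 - 2\alpha_k\bigl[f_i(z_i(k))-f_i(x^\star)\bigr].
\end{align*}
Summing over $i\in V$ and using the double stochasticity of $W(k)$ from Assumption \ref{assum:5}(iii), together with convexity of $\|\cdot-x^\star\|^2$, replaces $\sum_i\|v_i(k)-x^\star\|^2$ by $\sum_i\|x_i(k)-x^\star\|^2$ and gives a basic one-step descent relation.

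\textbf{Controlling the function term.} Because $\sum_i[f_i(z_i(k))-f_i(x^\star)]$ has indeterminate sign, I would introduce the running average $\bar{x}(k):=\frac{1}{m}\sum_j x_j(k)$ and rewrite this quantity as $[f(\bar{x}(k))-f^\star]+\sum_i[f_i(z_i(k))-f_i(\bar{x}(k))]$. By Proposition \ref{prop:1}(ii), $\alpha_k^{-1}(v_i(k)-z_i(k))\in\partial f_i(z_i(k))$, so (A3) gives $\|v_i(k)-z_i(k)\|\leq\alpha_k M_i$; Proposition \ref{prop:1}(iv) then bounds $|f_i(z_i(k))-f_i(\bar{x}(k))|$ by $M_i(\alpha_k M_i+\|v_i(k)-\bar{x}(k)\|)$. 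Taking $\mathrm{E}[\,\cdot\,|\mathcal{F}_k]$ and using Assumption \ref{assum:3} to convert $\mathrm{E}[\mathrm{d}(z_i(k),X_i^{\Omega_i(k)})^2|\mathcal{F}_k]$ into $c^{-1}\mathrm{d}(z_i(k),X)^2$, I would arrive at
\begin{align*}
\mathrm{E}\!\left[\sum_i\|x_i(k+1)-x^\star\|^2\,\Big|\,\mathcal{F}_k\right] \leq \sum_i\|x_i(k)-x^\star\|^2 - Z_k + W_k,
\end{align*}
where $Z_k\geq 0$ bundles $2\alpha_k[f(\bar{x}(k))-f^\star]$ and $c^{-1}\sum_i\mathrm{d}(z_i(k),X)^2$, while $W_k=O(\alpha_k^2)+O\bigl(\alpha_k\sum_i\|v_i(k)-\bar{x}(k)\|\bigr)$.

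\textbf{Concluding the argument.} The Ram--Nedi\'c--Veeravalli disagreement bound (Lemma \ref{lem:4.3}) together with (C2) yields $\|x_i(k)-\bar{x}(k)\|\to 0$ and $\sum_k\alpha_k\|x_i(k)-\bar{x}(k)\|<\infty$ almost surely, so $\sum_k W_k<\infty$ a.s. Proposition \ref{prop:2} then guarantees almost sure convergence of $\sum_i\|x_i(k)-x^\star\|^2$ for every $x^\star$ in a fixed countable dense subset of $X^\star$, as well as $\sum_k\alpha_k[f(\bar{x}(k))-f^\star]<\infty$ and $\sum_k\mathrm{d}(\bar{x}(k),X)^2<\infty$ a.s. Combining this with (C1) produces a subsequence on which $f(\bar{x}(k_\ell))\to f^\star$ and $\mathrm{d}(\bar{x}(k_\ell),X)\to 0$; Proposition \ref{prop:3} applied to the events $\{\bar{x}(k)\in F\}$ for closed sets $F$ disjoint from $X^\star$ then forces any a.s.\ accumulation point of $(\bar{x}(k))$ to lie in $X^\star$. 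A standard Fej\'er-type argument combining the existence of this accumulation point with a.s.\ convergence of $\|\bar{x}(k)-x^\star\|$ for each $x^\star$ in the dense subset promotes this to a.s.\ convergence of $(\bar{x}(k))$ to a single random element of $X^\star$, and the disagreement bound transfers the same limit to each $x_i(k)$. The main obstacle I anticipate is the construction in the second paragraph: the sign-indefinite term $2\alpha_k[f_i(z_i(k))-f_i(x^\star)]$ must be split so that what remains after absorbing the cross terms into $W_k$ forms a bona fide supermartingale, and proving $\sum_k W_k<\infty$ a.s.\ requires Lemma \ref{lem:4.3}, (C2), and (A3) to interact precisely.
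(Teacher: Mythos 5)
Your overall architecture (firm nonexpansivity plus the prox characterization, double stochasticity to pass from $v_i(k)$ to $x_i(k)$, the supermartingale theorem, the Ram--Nedi\'c--Veeravalli disagreement lemma, and a portmanteau/Fej\'er finish) matches the paper's, but the middle of your argument has a genuine gap: you apply Assumption (A3) at points where it does not apply. You set $z_i(k):=\mathrm{prox}_{\alpha_k f_i}(v_i(k))$ and claim $\|v_i(k)-z_i(k)\|\leq \alpha_k M_i$ because $\alpha_k^{-1}(v_i(k)-z_i(k))\in\partial f_i(z_i(k))$; but (A3) bounds subgradients only at points of $X_i$, and $z_i(k)$ has no reason to lie in $X_i$ (the proximity operator involves $f_i$ alone, and $v_i(k)$ is itself generally infeasible). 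Likewise, Proposition \ref{prop:1}(iv) converts a \emph{global} subgradient bound into Lipschitz continuity, so your estimate $|f_i(z_i(k))-f_i(\bar{x}(k))|\leq M_i(\alpha_k M_i+\|v_i(k)-\bar{x}(k)\|)$ silently upgrades (A3) to global Lipschitz continuity of $f_i$ (essentially (A3)$'$ or more), which Theorem \ref{thm:2} does not assume. The paper's proof is engineered precisely to avoid this: Lemma \ref{lem:4.1} evaluates subgradients only at an arbitrary $z\in X_i$ (later $z:=P_X(v_i(k))$ and $\bar{z}(k)\in X$), keeps the prox displacement $\|v_i(k)-\mathrm{prox}_{\alpha_k f_i}(v_i(k))\|^2$ as a separate term absorbed by the free parameters $\tau,\eta,\mu$, and Lemma \ref{lem:4.2} shows that this displacement is only \emph{square-summable} (via Proposition \ref{prop:2}), never that it is $O(\alpha_k)$.

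A second, related problem is the nonnegativity required by Proposition \ref{prop:2}. Your $Z_k$ contains $2\alpha_k[f(\bar{x}(k))-f^\star]$ with $\bar{x}(k)=\bar{v}(k)$, which is in general infeasible, so this term can be negative and $Z_k$ is not a legitimate nonnegative sequence; repairing it by comparing $f(\bar{x}(k))$ with $f(P_X(\bar{x}(k)))$ again needs Lipschitz estimates off $X$. The paper instead measures the optimality gap at $\bar{z}(k):=(1/m)\sum_i P_X(v_i(k))\in X$, which is simultaneously feasible (so $f(\bar{z}(k))-f^\star\geq 0$) and a point where (A3) legitimately bounds the subgradients used in inequality \eqref{fi}, and it bridges $z_i(k)$, $\bar{z}(k)$, $\bar{v}(k)$ through the summability facts of Lemmas \ref{lem:4.2} and \ref{lem:4.4}. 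Two smaller points: Lemma \ref{lem:4.3} is conditional on $\sum_k\alpha_k\|e_i(k)\|<\infty$, which you never verify (this is the content of Lemma \ref{lem:4.4} and again rests on Lemma \ref{lem:4.2}), and its conclusion is $\sum_k\alpha_k\|x_i(k)-x_j(k)\|<\infty$, not $\|x_i(k)-\bar{x}(k)\|\to 0$; the paper only extracts $\liminf_k\|v_i(k)-\bar{v}(k)\|=0$ and combines it with the almost sure convergence of $(v_i(k))_{k\geq 0}$. Your concluding Fej\'er-type and portmanteau reasoning is fine in outline, but the estimates feeding into it must be reworked along the paper's lines before the supermartingale step is valid under (A3).
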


\subsection{Proof of Theorem \ref{thm:2}}\label{subsec:4.1}
Let us first show the following lemma, which is needed to prove Theorem \ref{thm:2}.

\begin{lem}\label{lem:4.1}
Suppose that Assumption \ref{assum:1} holds. 
Then, for all $x\in X$, for all $i\in V$, for all $z\in X_i$, for all $k\geq 0$, and for all $\tau, \eta, \mu > 0$,
\begin{align*}
\left\| x_i \left(k+1\right) -x  \right\|^2 
&\leq 
\left\| v_i \left(k\right) -x  \right\|^2 -2 \alpha_k \left(f_i\left(z\right) -f_i\left(x\right) \right) 
+ N\left(\tau, \eta\right)\alpha_k^2\\
&\quad + \tau \left\| v_i\left(k\right) - z \right\|^2 + \left( \eta + \mu -2 \right) \left\| v_i\left(k\right) - \mathrm{prox}_{\alpha_k f_i} \left(v_i\left(k\right) \right)\right\|^2\\
&\quad - \left(1 - \frac{1}{\mu} \right) \left\| x_i\left(k+1\right) - v_i\left(k\right) \right\|^2,
\end{align*}
where $N(\tau,\eta)$ is a positive number depending on $\tau$ and $\eta$.
\end{lem}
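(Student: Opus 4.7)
The plan is to combine three ingredients—firm nonexpansivity of the projection, the proximal characterization in Proposition~\ref{prop:1}(ii), and a subgradient inequality evaluated at $z \in X_i$—and to parameterize each of the resulting crossterms via Young's inequality with its own free constant ($\tau$, $\eta$, and $\mu$ respectively). Set $p := \mathrm{prox}_{\alpha_k f_i}(v_i(k))$, so that the update becomes $x_i(k+1) = P_{X_i^{\Omega_i(k)}}(p)$. Because $x \in X \subset X_i^{\Omega_i(k)}$, the firm nonexpansivity of the projection (applied with $P_{X_i^{\Omega_i(k)}}(x) = x$) gives the reduction
\begin{align*}
\| x_i(k+1) - x\|^2 + \| p - x_i(k+1)\|^2 \leq \| p - x\|^2.
\end{align*}
I would then expand $\|p - x\|^2 = \|v_i(k) - x\|^2 - \|v_i(k) - p\|^2 + 2\langle x - p, v_i(k) - p\rangle$ about $v_i(k)$ and use Proposition~\ref{prop:1}(ii) with $y = x$ to bound the cross term by $2\alpha_k\bigl(f_i(x) - f_i(p)\bigr)$.

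Since $p$ need not lie in $X_i$, Assumption~(A3) cannot be applied to a subgradient at $p$; to circumvent this I would route through the prescribed $z \in X_i$. Pick $g_z \in \partial f_i(z)$, which is nonempty by Proposition~\ref{prop:1}(i); the subgradient inequality gives $f_i(z) - f_i(p) \leq \langle z - p, g_z\rangle$, so
\begin{align*}
2\alpha_k\bigl(f_i(x) - f_i(p)\bigr) \leq -2\alpha_k\bigl(f_i(z) - f_i(x)\bigr) + 2\alpha_k\langle z - p, g_z\rangle.
\end{align*}
Splitting $z - p = (z - v_i(k)) + (v_i(k) - p)$ and applying the Young inequality $2ab \leq \tau a^2 + b^2/\tau$ (and similarly with parameter $\eta$) to the two inner products that arise, together with the bound $\|g_z\| \leq M_i$ supplied by Assumption~(A3), replaces $2\alpha_k\langle z-p, g_z\rangle$ by $\tau\|v_i(k) - z\|^2 + \eta\|v_i(k) - p\|^2 + N(\tau,\eta)\alpha_k^2$ with $N(\tau,\eta) := (1/\tau + 1/\eta)M_i^2$.

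It then remains to reshape the residual $-\|p - x_i(k+1)\|^2$ inherited from the firm-nonexpansivity step so that the one-step displacement $\|x_i(k+1) - v_i(k)\|^2$ appears explicitly, since this quantity will be needed downstream for the supermartingale argument. Expanding $\|p - x_i(k+1)\|^2$ about $v_i(k)$ and bounding the cross term by $-2\langle p - v_i(k), v_i(k) - x_i(k+1)\rangle \leq \mu\|v_i(k) - p\|^2 + \mu^{-1}\|v_i(k) - x_i(k+1)\|^2$ yields
\begin{align*}
-\| p - x_i(k+1)\|^2 \leq (\mu - 1)\| v_i(k) - p\|^2 - (1 - 1/\mu)\| x_i(k+1) - v_i(k)\|^2.
\end{align*}
Summing the contributions to the coefficient of $\|v_i(k) - p\|^2$ at each stage, namely $-1$ (from the prox identity), $\eta$ (from the subgradient Young step), and $\mu - 1$ (from the projection-residual step), produces exactly $\eta + \mu - 2$, matching the claim.

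The main obstacle is not any single estimate but keeping clean track of three independent Young parameters and seeing why all of them are necessary: $\tau$ decouples the auxiliary $z$ from $v_i(k)$ so that Assumption~(A3) can be invoked at $z$; $\eta$ decouples $v_i(k)$ from the proximal image $p \notin X_i$; and $\mu$ trades the projection residual $\|p - x_i(k+1)\|^2$ against the one-step displacement $\|x_i(k+1) - v_i(k)\|^2$. Eliminating any one of them collapses the estimate and loses the flexibility needed later when these constants must be tuned to close the almost-sure convergence argument.
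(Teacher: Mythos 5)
Your proposal is correct and takes essentially the same route as the paper's proof: firm nonexpansivity of $P_{X_i^{\Omega_i(k)}}$, the proximal characterization of Proposition~\ref{prop:1}(ii), a subgradient detour through $z\in X_i$, and Young's inequality applied three times with the free parameters $\tau,\eta,\mu$, with identical coefficient bookkeeping $-1+\eta+(\mu-1)=\eta+\mu-2$. The only cosmetic difference is that the paper defines $N(\tau,\eta):=(1/\tau+1/\eta)\max_{i\in V}M_i^2$ so the constant is uniform in $i$, whereas your $N(\tau,\eta):=(1/\tau+1/\eta)M_i^2$ still depends on $i$; taking the maximum over $i$, as allowed by Assumption~(A3), fixes this trivially.
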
 

\begin{proof}
Choose $x\in X$ and $i\in V$ arbitrarily and fix an arbitrary $k\geq 0$.
The firm nonexpansivity of $P_{X_i^{\Omega_i(k)}}$ and $x = P_{X_i^{\Omega_i(k)}}(x)$ imply that
\begin{align*}
\left\| x_i \left(k+1\right) -x  \right\|^2 
&= 
\left\| P_{X_i^{\Omega_i\left(k\right)}} \left(\mathrm{prox}_{\alpha_k f_i} \left(v_i\left(k\right) \right) \right) 
  - P_{X_i^{\Omega_i\left(k\right)}}\left(x\right) \right\|^2\\
&\leq \left\| \mathrm{prox}_{\alpha_k f_i} \left(v_i\left(k\right) \right) -x  \right\|^2 
  - \left\| \mathrm{prox}_{\alpha_k f_i} \left(v_i\left(k\right) \right) - x_i \left(k+1\right)  \right\|^2.
\end{align*}
Since Proposition \ref{prop:1}(ii) implies that 
$v_i(k) - \mathrm{prox}_{\alpha_k f_i} (v_i(k))\in \partial (\alpha_k f_i) (\mathrm{prox}_{\alpha_k f_i} (v_i(k)))$, 
the definition of $\partial f_i$ $(i\in V)$ ensures that
\begin{align*}
\left\langle x - \mathrm{prox}_{\alpha_k f_i} \left(v_i\left(k\right)\right), 
v_i \left(k\right) - \mathrm{prox}_{\alpha_k f_i} \left(v_i\left(k\right)\right) \right\rangle 
\leq \alpha_k \left\{ f_i\left(x\right) - f_i\left(\mathrm{prox}_{\alpha_k f_i} \left(v_i\left(k\right)\right)\right) \right\},
\end{align*}
which, together with $\langle x,y\rangle = (1/2)(\|x\|^2 + \|y\|^2 - \|x-y\|^2)$ $(x,y\in \mathbb{R}^d)$, means that 
\begin{align*}
\left\| x - \mathrm{prox}_{\alpha_k f_i} \left(v_i\left(k\right)\right) \right\|^2 
&\leq 
\left\|  x- v_i \left(k\right) \right\|^2 
- \left\| v_i \left(k\right) - \mathrm{prox}_{\alpha_k f_i} \left(v_i\left(k\right)\right) \right\|^2\\
&\quad + 2 \alpha_k \left\{ f_i\left(x\right) - f_i\left(\mathrm{prox}_{\alpha_k f_i} \left(v_i\left(k\right)\right)\right) \right\}.
\end{align*}
Accordingly, 
\begin{align}
\left\| x_i \left(k+1\right) -x  \right\|^2 
&\leq 
\left\| v_i \left(k\right) -x\right\|^2 + 2 \alpha_k \left\{ f_i\left(x\right) - f_i\left(\mathrm{prox}_{\alpha_k f_i} \left(v_i\left(k\right)\right)\right) \right\}\label{ineq:4.1}\\
&\quad - \left\| v_i \left(k\right) - \mathrm{prox}_{\alpha_k f_i} \left(v_i\left(k\right)\right) \right\|^2
 - \left\| \mathrm{prox}_{\alpha_k f_i} \left(v_i\left(k\right) \right) - x_i \left(k+1\right)  \right\|^2\nonumber.
\end{align}
Choose $z\in X_i$ arbitrarily and set $g_i(z) \in \partial f_i(z)$. 
The definition of $\partial f_i$ ensures that
\begin{align}\label{ineq:4.2}
\begin{split}
&\quad 2 \alpha_k \left\{ f_i\left(x\right) - f_i\left(\mathrm{prox}_{\alpha_k f_i} \left(v_i\left(k\right)\right)\right) \right\}\\
&= 2 \alpha_k \left( f_i\left(x\right) - f_i \left(z\right) \right) + 2 \alpha_k \left\{ f_i \left(z\right) - f_i\left(\mathrm{prox}_{\alpha_k f_i} \left(v_i\left(k\right)\right)\right) \right\}\\
&\leq 2 \alpha_k \left( f_i \left(x \right) - f_i \left(z\right) \right)
+ 2 \alpha_k \left\langle z - \mathrm{prox}_{\alpha_k f_i} \left(v_i\left(k\right)\right), g_i\left(z\right) \right\rangle.
\end{split}
\end{align}
From the Cauchy-Schwarz inequality and $2|a||b| \leq \tau a^2 + (1/\tau)b^2$ $(a,b\in \mathbb{R},\tau > 0)$
\cite[Inequality (8)]{nedic2011}, for all $\tau, \eta > 0$,
\begin{align}\label{ineq:4.3}
\begin{split}
&\quad 2 \alpha_k \left\langle z - \mathrm{prox}_{\alpha_k f_i} \left(v_i\left(k\right)\right), g_i\left(z\right) \right\rangle\\
&= 2 \alpha_k \left\langle z - v_i\left(k\right), g_i\left(z\right) \right\rangle
    + 2 \alpha_k \left\langle v_i\left(k\right) - \mathrm{prox}_{\alpha_k f_i} \left(v_i\left(k\right)\right), g_i\left(z\right) \right\rangle\\
&\leq \tau \left\| z - v_i\left(k\right) \right\|^2 
  + \left( \frac{1}{\tau} + \frac{1}{\eta} \right) \left\|g_i\left(z\right) \right\|^2 \alpha_k^2
  + \eta \left\|v_i\left(k\right) - \mathrm{prox}_{\alpha_k f_i} \left(v_i\left(k\right)\right) \right\|^2.
\end{split}                
\end{align}
Moreover, from $\| x+y\|^2 = \|x\|^2 + \|y\|^2 + 2 \langle x,y \rangle$ $(x,y\in \mathbb{R}^d)$, 
\begin{align*}
\left\| \mathrm{prox}_{\alpha_k f_i} \left(v_i\left(k\right) \right) - x_i \left(k+1\right)  \right\|^2
&= 
\left\| \mathrm{prox}_{\alpha_k f_i} \left(v_i\left(k\right) \right) - v_i \left(k\right)\right\|^2 
   + \left\|v_i \left(k\right) - x_i \left(k+1\right) \right\|^2\\
&\quad + 2 \left\langle  \mathrm{prox}_{\alpha_k f_i} \left(v_i\left(k\right) \right) - v_i \left(k\right), 
        v_i \left(k\right) - x_i \left(k+1\right)  \right\rangle,  
\end{align*}
which, together with the Cauchy-Schwarz inequality and $2|a||b| \leq \tau a^2 + (1/\tau)b^2$ $(a,b\in \mathbb{R},\tau > 0)$,
implies that, for all $\mu > 0$,
\begin{align}\label{ineq:4.4}
\begin{split}
\left\| \mathrm{prox}_{\alpha_k f_i} \left(v_i\left(k\right) \right) - x_i \left(k+1\right)  \right\|^2
&\geq \left(1-\mu \right)\left\| \mathrm{prox}_{\alpha_k f_i} \left(v_i\left(k\right) \right) - v_i \left(k\right)\right\|^2\\
&\quad + \left(1 - \frac{1}{\mu} \right) \left\|v_i \left(k\right) - x_i \left(k+1\right) \right\|^2.
\end{split}
\end{align}
Hence, \eqref{ineq:4.1}, \eqref{ineq:4.2}, \eqref{ineq:4.3}, and \eqref{ineq:4.4} guarantee that, for all $\tau,\eta,\mu > 0$,
\begin{align*}
\left\| x_i \left(k+1\right) -x  \right\|^2 
&\leq 
\left\| v_i \left(k\right) -x\right\|^2 + 2 \alpha_k \left( f_i \left(x \right) - f_i \left(z\right) \right)
+ \left( \frac{1}{\tau} + \frac{1}{\eta} \right) \left\|g_i\left(z\right) \right\|^2 \alpha_k^2\\
&\quad + \tau \left\| z - v_i\left(k\right) \right\|^2 
  + \left(\eta + \mu -2 \right) \left\|v_i\left(k\right) - \mathrm{prox}_{\alpha_k f_i} \left(v_i\left(k\right)\right) \right\|^2\\
&\quad - \left(1 - \frac{1}{\mu} \right) \left\|v_i \left(k\right) - x_i \left(k+1\right) \right\|^2.
\end{align*}
Since (A3) ensures that $N (\tau,\eta) := (1/\tau +  1/\eta) \max_{i\in V} (\sup \{ \|g_i(z)\|^2  \colon z\in X_i\})
=  (1/\tau +  1/\eta) \max_{i\in V} M_i^2 < \infty$, the above inequality leads to Lemma \ref{lem:4.1}.
\end{proof}

\begin{lem}\label{lem:4.2}
Suppose that Assumptions \ref{assum:1}, \ref{assum:3}, and \ref{assum:5} hold. 
Then, 
$\sum_{k=0}^\infty \|v_i(k) - \mathrm{prox}_{\alpha_k f_i} (v_i(k))\|^2 < \infty$, and
$\sum_{k=0}^\infty \mathrm{d}(v_i(k), X)^2 < \infty$ almost surely for all $i\in V$.
\end{lem}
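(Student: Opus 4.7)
The plan is to derive a supermartingale-type recursion for $Y_k := \sum_{i \in V}\|x_i(k) - x^\star\|^2$, where $x^\star \in X^\star$ is fixed once and for all, and then invoke Proposition~\ref{prop:2} in a suitable form. First I would apply Lemma~\ref{lem:4.1} with the choice $z = x = x^\star$, which is legitimate because $X^\star \subseteq X \subseteq X_i$ for every $i$; the function-value term $-2\alpha_k(f_i(z) - f_i(x))$ then vanishes identically. I fix $\mu > 1$ and $\eta > 0$ with $\eta + \mu < 2$ (say $\mu = 3/2$, $\eta = 1/4$), so that the coefficient $(\eta + \mu - 2)$ of $\|v_i(k) - p_i(k)\|^2$ and the coefficient $-(1 - 1/\mu)$ of $\|x_i(k+1) - v_i(k)\|^2$ are both strictly negative, where I abbreviate $p_i(k) := \mathrm{prox}_{\alpha_k f_i}(v_i(k))$.

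Second, I pass to conditional expectation given $\mathcal{F}_k$. Since $v_i(k)$ and $p_i(k)$ are $\mathcal{F}_k$-measurable while $\Omega_i(k)$ is independent of $\mathcal{F}_k$ by Assumption~\ref{assum:2}, only $\|x_i(k+1) - v_i(k)\|^2$ needs averaging. The inclusion $x_i(k+1) \in X_i^{\Omega_i(k)}$ gives $\|x_i(k+1) - v_i(k)\|^2 \geq \mathrm{d}(v_i(k), X_i^{\Omega_i(k)})^2$, and Assumption~\ref{assum:3} then yields $\mathrm{E}[\|x_i(k+1) - v_i(k)\|^2 \mid \mathcal{F}_k] \geq c^{-1}\mathrm{d}(v_i(k), X)^2$. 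Summing over $i \in V$ and using convexity of $\|\cdot\|^2$ together with the double stochasticity of $W(k)$ from Assumption~\ref{assum:5}(iii) (so that $\sum_{i \in V}\|v_i(k) - x^\star\|^2 \leq \sum_{j \in V}\|x_j(k) - x^\star\|^2 = Y_k$), I arrive at the master recursion
\begin{equation*}
\mathrm{E}\bigl[Y_{k+1} \mid \mathcal{F}_k\bigr] \leq (1+\tau)\,Y_k + m\,N(\tau,\eta)\,\alpha_k^2 - b_1 \sum_{i \in V}\|v_i(k) - p_i(k)\|^2 - b_2 \sum_{i \in V}\mathrm{d}(v_i(k), X)^2,
\end{equation*}
with $b_1 = 2 - \eta - \mu > 0$, $b_2 = (1 - 1/\mu)/c > 0$, and $\sum_k N(\tau,\eta)\alpha_k^2 < \infty$ thanks to (C2).

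The main obstacle I foresee is the multiplier $(1+\tau)$ in front of $Y_k$, since Proposition~\ref{prop:2} as stated requires the coefficient of $Y_k$ to equal $1$. To resolve this I would invoke the Robbins--Siegmund refinement of the supermartingale convergence theorem, which admits the relaxed inequality $\mathrm{E}[Y_{k+1}|\mathcal{F}_k] \leq (1+a_k)Y_k + W_k - Z_k$ with $\sum a_k < \infty$ and $\sum W_k < \infty$ almost surely, still concluding that $(Y_k)$ converges almost surely and $\sum_k Z_k < \infty$ almost surely. Combining this with a preliminary pathwise boundedness estimate on $(Y_k)$ derived from the consensus inequality in Lemma~\ref{lem:4.3} (so that the deterministic scalar $\tau Y_k$ can be absorbed into the summable $W_k$ slot) then finishes the argument: the subtracted terms are almost surely summable, yielding $\sum_k \|v_i(k) - p_i(k)\|^2 < \infty$ and $\sum_k \mathrm{d}(v_i(k), X)^2 < \infty$ for each $i \in V$.
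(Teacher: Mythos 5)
Your derivation is fine up to the master recursion, but the recursion you arrive at cannot be closed, and the gap is exactly the one you flag: the coefficient $(1+\tau)$ in front of $Y_k$ with a \emph{fixed} $\tau>0$. The Robbins--Siegmund refinement requires $\sum_k a_k<\infty$, whereas your $a_k\equiv\tau$ gives $\sum_k a_k=\infty$, so it does not apply. Letting $\tau=\tau_k$ be summable does not help either, because the additive error inherited from Lemma \ref{lem:4.1} is $N(\tau_k,\eta)\alpha_k^2\sim\alpha_k^2/\tau_k$, and by Cauchy--Schwarz $\sum_k\alpha_k\leq(\sum_k\tau_k)^{1/2}(\sum_k\alpha_k^2/\tau_k)^{1/2}$, so summability of both $\tau_k$ and $\alpha_k^2/\tau_k$ would contradict (C1). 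Finally, the proposed absorption of $\tau Y_k$ into the $W_k$ slot via ``pathwise boundedness from Lemma \ref{lem:4.3}'' fails twice over: Lemma \ref{lem:4.3} needs Assumption \ref{assum:4} (not assumed in Lemma \ref{lem:4.2}) and its hypothesis $\sum_k\alpha_k\|e_i(k)\|<\infty$ is itself established only downstream of Lemma \ref{lem:4.2} (via Lemma \ref{lem:4.4}), so the argument would be circular; and even granting a pathwise bound $Y_k\leq C$, the term $\tau Y_k\leq\tau C$ is still not summable, hence cannot be placed in the $W_k$ slot. Boundedness of the iterates is in fact only obtained later (Lemma \ref{lem:4.5}), using Lemma \ref{lem:4.2} as input.

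The missing idea is the choice of the comparison point in Lemma \ref{lem:4.1}: instead of $x=z:=x^\star$, take the $k$-dependent point $x=z:=P_X(v_i(k))$. The function-value term still cancels (since $z=x$), but now the troublesome term becomes $\tau\|v_i(k)-z\|^2=\tau\,\mathrm{d}(v_i(k),X)^2$, i.e.\ a quantity of the same type as the negative term $-(1/c)(1-1/\mu)\,\mathrm{d}(v_i(k),X)^2$ produced by Assumption \ref{assum:3} after conditioning on $\mathcal{F}_k$. Choosing, e.g., $\tau=1/(6c)$, $\eta=1/3$, $\mu=3/2$ makes the net coefficient of $\mathrm{d}(v_i(k),X)^2$ strictly negative, while the Lyapunov quantity becomes $\sum_{i}\mathrm{d}(x_i(k),X)^2$ (using $\mathrm{d}(x_i(k+1),X)\leq\|x_i(k+1)-P_X(v_i(k))\|$, convexity of $\mathrm{d}(\cdot,X)^2$ and double stochasticity of $W(k)$), with coefficient exactly $1$. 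Then the plain supermartingale theorem (Proposition \ref{prop:2}) with $W_k=mN\alpha_k^2$ and $Z_k$ collecting the two nonnegative terms $\frac{1}{6}\sum_i\|v_i(k)-\mathrm{prox}_{\alpha_k f_i}(v_i(k))\|^2$ and $\frac{1}{6c}\sum_i\mathrm{d}(v_i(k),X)^2$ yields both summability claims, under exactly the assumptions stated in the lemma.
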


\begin{proof}
Let us take $x = z := P_X (v_i(k))$ $(i\in V, k\geq 0)$.
Then, Lemma \ref{lem:4.1} guarantees that, 
for all $i\in V$, for all $k\geq 0$, and for all $\tau,\eta,\mu > 0$,
\begin{align*}
\left\| x_i \left(k+1\right) - P_X \left(v_i\left(k\right)\right)  \right\|^2 
&\leq 
\left\| v_i \left(k\right) - P_X \left(v_i\left(k\right)\right)  \right\|^2 + \tau \left\| v_i\left(k\right) - P_X \left(v_i\left(k\right)\right) \right\|^2\\
&\quad + \left( \eta + \mu -2 \right) \left\| v_i\left(k\right) - \mathrm{prox}_{\alpha_k f_i} \left(v_i\left(k\right)\right) \right\|^2\\
&\quad - \left(1 - \frac{1}{\mu} \right) \left\|v_i \left(k\right) - x_i \left(k+1\right) \right\|^2 + N\left(\tau, \eta\right)\alpha_k^2,
\end{align*}
which, together with $\mathrm{d}(v_i(k),X) = \| v_i(k) - P_X (v_i(k)) \|$ and 
$\mathrm{d}(x_i(k+1), X) \leq \| x_i (k+1) - P_X (v_i(k))\|$ $(i\in V, k\geq 0)$, implies that, for all $i\in V$, for all $k\geq 0$, and for all $\tau,\eta,\mu > 0$,
\begin{align*}
\mathrm{d}\left(x_i\left(k+1\right), X\right)^2 
&\leq 
\mathrm{d}\left(v_i\left(k\right), X\right)^2 + \tau \mathrm{d}\left(v_i\left(k\right), X\right)^2\\
&\quad + \left( \eta + \mu -2 \right) \left\| v_i\left(k\right) - \mathrm{prox}_{\alpha_k f_i} \left(v_i\left(k\right)\right) \right\|^2
  + N\left(\tau, \eta\right)\alpha_k^2\\
&\quad - \left(1 - \frac{1}{\mu} \right) \left\|v_i \left(k\right) - x_i \left(k+1\right) \right\|^2.    
\end{align*}
Since $x_i(k+1) \in X_i^{\Omega_i(k)}$ $(i\in V,k\geq 0)$ means that $\mathrm{d}(v_i(k), X_i^{\Omega_i(k)}) \leq \| v_i(k) - x_i(k+1)\|$ $(i\in V,k\geq 0)$,
for all $i\in V$, for all $k\geq 0$, for all $\tau,\eta > 0$, and for all $\mu > 1$,
\begin{align*}
\mathrm{d}\left(x_i\left(k+1\right), X\right)^2 
&\leq 
\mathrm{d}\left(v_i\left(k\right), X\right)^2 + \tau \mathrm{d}\left(v_i\left(k\right), X\right)^2\\
&\quad + \left( \eta + \mu -2 \right) \left\| v_i\left(k\right) - \mathrm{prox}_{\alpha_k f_i} \left(v_i\left(k\right)\right) \right\|^2
  + N\left(\tau, \eta\right)\alpha_k^2\\
&\quad - \left(1 - \frac{1}{\mu} \right) \mathrm{d}\left(v_i\left(k\right), X_i^{\Omega_i(k)} \right)^2.
\end{align*}
By taking the expectation in this inequality conditioned on $\mathcal{F}_k$ defined in \eqref{F}, 
we have that, 
for all $i\in V$, for all $k\geq 0$, for all $\tau,\eta > 0$, and for all $\mu > 1$, almost surely
\begin{align*}
\mathrm{E} \left[ \mathrm{d}\left(x_i\left(k+1\right), X\right)^2 \Big| \mathcal{F}_k \right]
&\leq
\mathrm{d}\left(v_i\left(k\right), X\right)^2 + \tau \mathrm{d}\left(v_i\left(k\right), X\right)^2\\
&\quad + \left( \eta + \mu -2 \right) \left\| v_i\left(k\right) - \mathrm{prox}_{\alpha_k f_i} \left(v_i\left(k\right)\right) \right\|^2
  + N\left(\tau, \eta\right)\alpha_k^2\\
&\quad - \left(1 - \frac{1}{\mu} \right)
\mathrm{E} \left[ \mathrm{d}\left(v_i\left(k\right), X_i^{\Omega_i(k)} \right)^2 \Big| \mathcal{F}_k \right].
\end{align*}
Accordingly, Assumption \ref{assum:3} leads to the finding that, almost surely,
for all $i\in V$, for all $k\geq 0$, for all $\tau,\eta > 0$, and for all $\mu > 1$, 
\begin{align*}
\mathrm{E} \left[ \mathrm{d}\left(x_i\left(k+1\right), X\right)^2 \Big| \mathcal{F}_k \right]
&\leq
\mathrm{d}\left(v_i\left(k\right), X\right)^2+ \left( \eta + \mu -2 \right) \left\| v_i\left(k\right) - \mathrm{prox}_{\alpha_k f_i} \left(v_i\left(k\right)\right) \right\|^2\\
&\quad + \left\{\tau - \frac{1}{c}\left(1 - \frac{1}{\mu} \right) \right\} \mathrm{d}\left(v_i\left(k\right), X \right)^2
+ N\left(\tau, \eta\right)\alpha_k^2.
\end{align*}
Let us take $\tau := 1/(6c)$, $\eta := 1/3$, and $\mu := 3/2$.
From $\eta + \mu -2 = -1/6$, $\tau - (1/c)(1 - 1/\mu) = -1/(6c)$, and the convexity of $\mathrm{d}(\cdot,X)^2$, 
almost surely for all $i\in V$ and for all $k\geq 0$,
\begin{align*}
\mathrm{E} \left[ \mathrm{d}\left(x_i\left(k+1\right), X\right)^2 \Big| \mathcal{F}_k \right]
&\leq 
\sum_{j=1}^m \left[W(k)\right]_{ij} \mathrm{d}\left(x_j\left(k\right), X\right)^2 
   - \frac{1}{6}\left\| v_i\left(k\right) - \mathrm{prox}_{\alpha_k f_i} \left(v_i\left(k\right)\right) \right\|^2\\
&\quad - \frac{1}{6c} \mathrm{d}\left(v_i\left(k\right), X \right)^2 + N\left(\frac{1}{6c}, \frac{1}{3} \right)\alpha_k^2,
\end{align*}
where $N(1/(6c), 1/3) < \infty$ is guaranteed from Assumption (A3) (also see proof of Lemma \ref{lem:4.1}).
Hence, Assumption \ref{assum:5} ensures that, almost surely, for all $k\geq 0$, 
\begin{align}
\mathrm{E} \left[ \sum_{i=1}^m \mathrm{d}\left(x_i\left(k+1\right), X\right)^2 \Bigg| \mathcal{F}_k \right]
&\leq 
\sum_{j=1}^m  \mathrm{d}\left(x_j\left(k\right), X\right)^2 
 - \frac{1}{6} \sum_{i=1}^m \left\| v_i\left(k\right) - \mathrm{prox}_{\alpha_k f_i} \left(v_i\left(k\right)\right) \right\|^2\nonumber\\
&\quad - \frac{1}{6c} \sum_{i=1}^m \mathrm{d}\left(v_i\left(k\right), X \right)^2 + mN\left(\frac{1}{6c}, \frac{1}{3} \right)\alpha_k^2.\label{key1}
\end{align}
Proposition \ref{prop:2} and (C2) lead to 
$\sum_{k=0}^\infty \sum_{i=1}^m \| v_i(k) - \mathrm{prox}_{\alpha_k f_i}(v_i(k))\|^2 < \infty$ and 
$\sum_{k=0}^\infty \sum_{i=1}^m \mathrm{d}(v_i(k), X)^2 < \infty$ almost surely.
This completes the proof.
\end{proof}

The following lemma can be proven by using \cite[Theorem 4.2]{ram2012}.

\begin{lem}\label{lem:4.3}
Suppose that Assumptions \ref{assum:4} and \ref{assum:5} hold, and 
define $e_i(k) := x_i(k+1) - v_i(k)$ for all $i\in V$ and for all $k\geq 0$.
If $\sum_{k=0}^\infty \alpha_k \|e_i(k)\| < \infty$ almost surely for all $i\in V$,
$\sum_{k=0}^\infty \alpha_k \| x_i(k) - x_j(k) \| < \infty$ almost surely for all $i,j\in V$.
\end{lem}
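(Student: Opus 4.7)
The approach is to cast the algorithm as a linearly perturbed consensus dynamics and appeal to \cite[Theorem 4.2]{ram2012}. From \eqref{viprox} we have $v_i(k) = \sum_{j=1}^m [W(k)]_{ij} x_j(k)$ (since $w_{ij}(k)=0$ off $N_i(k)$), so together with $x_i(k+1) = v_i(k) + e_i(k)$ the update becomes
\begin{align*}
x_i(k+1) \;=\; \sum_{j=1}^m [W(k)]_{ij}\,x_j(k) + e_i(k),
\end{align*}
which is exactly the perturbed-averaging form studied in \cite{ram2012}.

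First I would unroll this recursion using the backward transition matrices $\Phi(k,s) := W(k)W(k-1)\cdots W(s)$ (with $\Phi(k,k+1):=I$), obtaining
\begin{align*}
x_i(k+1) \;=\; \sum_{j=1}^m [\Phi(k,0)]_{ij}\, x_j(0) + \sum_{s=0}^{k-1}\sum_{j=1}^m [\Phi(k,s+1)]_{ij}\, e_j(s) + e_i(k).
\end{align*}
Assumptions~\ref{assum:4} and~\ref{assum:5} supply precisely the hypotheses of \cite[Theorem 4.2]{ram2012}: the union graph over any $Q$ consecutive steps is strongly connected, and the weights $[W(k)]_{ij}$ are doubly stochastic with a uniform positive lower bound on the nonzero entries. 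That theorem then yields constants $\Gamma>0$ and $\beta\in(0,1)$ with
\begin{align*}
\bigl|[\Phi(k,s)]_{ij}-\tfrac{1}{m}\bigr| \;\leq\; \Gamma\,\beta^{\,k-s} \qquad \text{for all } k\geq s \text{ and } i,j\in V.
\end{align*}

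Subtracting the representations for users $i$ and $l$ cancels the common $1/m$ averages, so the triangle inequality gives
\begin{align*}
\|x_i(k+1)-x_l(k+1)\| &\leq 2\Gamma\,\beta^{\,k+1}\sum_{j=1}^m \|x_j(0)\| + 2\Gamma\sum_{s=0}^{k-1}\beta^{\,k-s}\sum_{j=1}^m \|e_j(s)\|\\
&\quad + \|e_i(k)\| + \|e_l(k)\|.
\end{align*}
Multiplying this bound by $\alpha_{k+1}$ and summing in $k$, the initial-data contribution $\sum_k \alpha_{k+1}\beta^{k+1}$ is finite because $\alpha_k$ is bounded by (C2), and the $\|e_i(k)\|+\|e_l(k)\|$ pieces are summable against $\alpha_{k+1}$ directly from the hypothesis $\sum_k \alpha_k\|e_i(k)\|<\infty$ (after a harmless index shift using that a diminishing $\alpha_k$ satisfies $\alpha_{k+1}\leq \alpha_k$ eventually).

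The main obstacle is the double sum $\sum_k \alpha_{k+1}\sum_{s=0}^{k-1}\beta^{\,k-s}\|e_j(s)\|$. Swapping the order of summation gives
\begin{align*}
\sum_{s=0}^{\infty}\|e_j(s)\|\sum_{k=s+1}^{\infty}\alpha_{k+1}\beta^{\,k-s},
\end{align*}
and the inner geometric-weighted tail is to be controlled by a constant multiple of $\alpha_s$ --- a Kronecker-type estimate valid for the eventually monotone diminishing step sizes satisfying (C1)--(C2). This collapses the cross term to a finite multiple of $\sum_s \alpha_s\sum_j\|e_j(s)\|<\infty$, and combining the three bounds produces $\sum_{k=0}^\infty \alpha_k\|x_i(k)-x_l(k)\|<\infty$ almost surely for every pair $i,l\in V$.
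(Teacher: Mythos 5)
Your overall strategy is the standard one, but note that the paper's own proof consists of a single observation: writing $x_i(k+1)=\sum_{j=1}^m[W(k)]_{ij}x_j(k)+e_i(k)$ puts the iterates exactly in the perturbed-consensus form of \cite[Theorem 4.2]{ram2012}, and that theorem, with $\theta_{i,k}$ replaced by $x_i(k)$, \emph{is} the statement of the lemma. You instead use \cite{ram2012} only for the geometric mixing bound $|[\Phi(k,s)]_{ij}-1/m|\le\Gamma\beta^{k-s}$ (which is in fact an earlier result in that paper, not Theorem 4.2 itself) and then re-derive the summability conclusion by hand. That is legitimate in principle, but it means you must actually close the summation estimates, and this is where your argument has a genuine gap.

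The gap is your treatment of the step sizes. Assumption \ref{assum:5} imposes only (C1) and (C2); it does not make $(\alpha_k)$ monotone, and your claim that ``a diminishing $\alpha_k$ satisfies $\alpha_{k+1}\le\alpha_k$ eventually'' is false: $\alpha_k\to0$ (which is all (C2) gives) does not imply eventual monotonicity (e.g.\ $\alpha_k$ alternating between $1/k$ and $2/k$). Consequently, neither the index shift that converts $\sum_k\alpha_{k+1}\|e_i(k)\|$ into the hypothesized $\sum_k\alpha_k\|e_i(k)\|$, nor the key Kronecker-type estimate $\sum_{k\ge s+1}\alpha_{k+1}\beta^{k-s}\le C\,\alpha_s$, is justified as stated; the latter genuinely fails if $\alpha_s$ can be anomalously small compared with later step sizes. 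To repair the argument you must either add the hypothesis that $(\alpha_k)$ is nonincreasing (which is what the cited consensus results effectively use, and which holds for the step sizes $\alpha/(k+1)$ employed in the paper), or avoid the comparison with $\alpha_s$ altogether, e.g.\ by bounding $\alpha_{k+1}\beta^{k-s}\|e_j(s)\|\le\tfrac12\beta^{k-s}\bigl(\alpha_{k+1}^2+\|e_j(s)\|^2\bigr)$ and using (C2) together with $\sum_k\|e_j(k)\|^2<\infty$ — but the latter is not among the hypotheses of this lemma (it is only established separately in Lemma \ref{lem:4.4}), so that patch proves a variant of the lemma rather than the lemma as stated.
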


\begin{proof}
The definition of $e_i(k)$ $(i\in V,k\geq 0)$ and \eqref{viprox} imply that 
$x_i(k+1) = v_i(k) + e_i(k) = \sum_{j=1}^m [W(k)]_{ij} x_j(k) + e_i(k)$ $(i\in V,k\geq 0)$.
Replacing $\theta_{i,k}$ in \cite[Theorem 4.2]{ram2012} by $x_i(k)$ guarantees that 
$\sum_{k=0}^\infty \alpha_k \| x_i(k) - x_j(k) \| < \infty$ almost surely for all $i,j\in V$.
\end{proof}

Lemmas \ref{lem:4.2} and \ref{lem:4.3} lead to the following.

\begin{lem}\label{lem:4.4}
Suppose that Assumptions \ref{assum:1}, \ref{assum:4}, and \ref{assum:5} hold and 
define $\bar{v}(k) := (1/m) \sum_{l=1}^m v_l(k)$ for all $k\geq 0$.
Then, $\sum_{k=0}^\infty \| e_i(k)\|^2 < \infty$ and $\sum_{k=0}^\infty \alpha_k \| v_i(k) - \bar{v}(k)\| < \infty$
almost surely for all $i\in V$.
\end{lem}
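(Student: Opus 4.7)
The plan is to first establish the $\ell^2$ summability of $(e_i(k))_{k\geq 0}$, then use Cauchy--Schwarz to verify the hypothesis of Lemma \ref{lem:4.3}, and finally bound the disagreement $\|v_i(k)-\bar{v}(k)\|$ by pairwise disagreements among the $x_j(k)$.

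For the first assertion, I would specialize Lemma \ref{lem:4.1} to $x = z := P_X(v_i(k))$, so that the term $f_i(z) - f_i(x)$ vanishes and $\|v_i(k) - z\|^2 = \mathrm{d}(v_i(k), X)^2$. Choosing parameters $\mu > 1$ with $\eta + \mu \leq 2$ (e.g.\ $\mu = 3/2$, $\eta = 1/2$, and any $\tau > 0$) makes the coefficient $(\eta + \mu - 2)$ of $\|v_i(k) - \mathrm{prox}_{\alpha_k f_i}(v_i(k))\|^2$ nonpositive. Dropping this nonpositive contribution and also the nonpositive $-\|x_i(k+1) - P_X(v_i(k))\|^2$ term (bounding $\|x_i(k+1) - P_X(v_i(k))\|^2 \geq 0$), Lemma \ref{lem:4.1} reduces to a bound of the form
\begin{equation*}
\left(1 - \tfrac{1}{\mu}\right)\|e_i(k)\|^2 \leq (1 + \tau)\,\mathrm{d}(v_i(k), X)^2 + N(\tau,\eta)\,\alpha_k^2.
\end{equation*}
Summing over $k$ and invoking Lemma \ref{lem:4.2} together with condition (C2) of Assumption \ref{assum:5} yields $\sum_{k=0}^\infty \|e_i(k)\|^2 < \infty$ almost surely for each $i\in V$.

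For the second assertion, Cauchy--Schwarz gives
\begin{equation*}
\sum_{k=0}^\infty \alpha_k \|e_i(k)\| \leq \left(\sum_{k=0}^\infty \alpha_k^2\right)^{1/2}\!\left(\sum_{k=0}^\infty \|e_i(k)\|^2\right)^{1/2} < \infty \quad \text{a.s.}
\end{equation*}
Hence Lemma \ref{lem:4.3} applies and produces $\sum_{k=0}^\infty \alpha_k \|x_i(k) - x_j(k)\| < \infty$ almost surely for all $i,j \in V$. Next, since Assumption \ref{assum:5}(iii) implies $W(k)$ is doubly stochastic, a direct computation (swap summations and use column-sum $=1$) gives $\bar{v}(k) = (1/m)\sum_{j\in V} x_j(k) =: \bar{x}(k)$. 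Using $\sum_j w_{ij}(k) = 1$, I write
\begin{equation*}
v_i(k) - \bar{v}(k) = \sum_{j\in V} w_{ij}(k)\,(x_j(k) - \bar{x}(k)),
\end{equation*}
so that $\|v_i(k) - \bar{v}(k)\| \leq \max_{j\in V}\|x_j(k) - \bar{x}(k)\| \leq \frac{1}{m}\sum_{j,l\in V}\|x_j(k) - x_l(k)\|$. Multiplying by $\alpha_k$ and summing over $k$, the pairwise summability from Lemma \ref{lem:4.3} yields $\sum_{k=0}^\infty \alpha_k\|v_i(k) - \bar{v}(k)\| < \infty$ almost surely.

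The main obstacle is the first step: the right choice of $(\tau,\eta,\mu)$ in Lemma \ref{lem:4.1} to extract a clean bound on $\|e_i(k)\|^2$ in terms of only the quantities $\mathrm{d}(v_i(k), X)^2$ and $\alpha_k^2$ already known to be summable from Lemma \ref{lem:4.2}. Once this parameter-tuning is pinned down, the remaining steps reduce to Cauchy--Schwarz, an appeal to Lemma \ref{lem:4.3}, and the standard consensus-style inequality relating $\|v_i(k) - \bar{v}(k)\|$ to pairwise differences of iterates.
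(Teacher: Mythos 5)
Your proof is correct, and its overall architecture matches the paper's: establish $\sum_{k}\|e_i(k)\|^2<\infty$, upgrade to $\sum_k \alpha_k\|e_i(k)\|<\infty$, invoke Lemma \ref{lem:4.3}, identify $\bar v(k)$ with the average of the $x_j(k)$ via double stochasticity, and finish with the consensus bound $\|v_i(k)-\bar v(k)\|\le \frac{1}{m}\sum_{j,l}\|x_j(k)-x_l(k)\|$. The one genuine difference is how you obtain the square-summability of $e_i(k)$: you specialize Lemma \ref{lem:4.1} to $x=z:=P_X(v_i(k))$ and tune the parameters (your choice $\mu=3/2$, $\eta=1/2$ gives $\eta+\mu-2=0$ and $1-1/\mu=1/3$), which yields $\|e_i(k)\|^2\le 3(1+\tau)\,\mathrm{d}(v_i(k),X)^2+3N(\tau,\eta)\alpha_k^2$, so you only need the summability of $\sum_k\mathrm{d}(v_i(k),X)^2$ from Lemma \ref{lem:4.2} together with (C2). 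The paper instead argues directly from the triangle inequality and the nonexpansivity of $P_{X_i^{\Omega_i(k)}}$ (using that $z_i(k):=P_X(v_i(k))\in X\subset X_i^{\Omega_i(k)}$ is a fixed point of that projection), arriving at $\|e_i(k)\|^2\le 2\|v_i(k)-\mathrm{prox}_{\alpha_k f_i}(v_i(k))\|^2+4\,\mathrm{d}(v_i(k),X)^2$ and thus using both summable series of Lemma \ref{lem:4.2}. Your route is slightly slicker in recycling Lemma \ref{lem:4.1} and avoiding the proximal residual altogether; the paper's is more elementary and needs no parameter bookkeeping. Your Cauchy--Schwarz step in place of the paper's $ab\le\frac12(a^2+b^2)$ bound is immaterial. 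One shared caveat: both arguments rest on Lemma \ref{lem:4.2}, whose hypotheses include Assumption \ref{assum:3}, which is not listed in the statement of Lemma \ref{lem:4.4}; since the paper's own proof does the same (and all assumptions are in force in Theorem \ref{thm:2}), this is a bookkeeping issue of the paper rather than a defect of your proposal.
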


\begin{proof}
Put $z_i(k) := P_X (v_i(k))$ $(i\in V,k\geq 0)$.
From the triangle inequality, $z_i(k) \in X \subset X_i^{\Omega_i(k)}$ $(i\in V,k\geq 0)$,
and the nonexpansivity of $P_{X_i^{\Omega_i(k)}}$ $(i\in V,k\geq 0)$,
\begin{align*}
\left\| e_i\left(k\right) \right\| 
&\leq 
\left\| x_i\left(k+1\right) - z_i\left(k\right)\right\| + \left\| z_i\left(k\right) - v_i\left(k\right)\right\|\\
&= \left\| P_{X_i^{\Omega_i\left(k\right)}} \left(\mathrm{prox}_{\alpha_k f_i} \left(v_i\left(k\right) \right) \right) 
    - P_{X_i^{\Omega_i\left(k\right)}} \left( z_i\left(k\right) \right) \right\|
    + \left\| z_i\left(k\right) - v_i\left(k\right)\right\|\\
&\leq  \left\| \mathrm{prox}_{\alpha_k f_i} \left(v_i\left(k\right) \right) -  z_i\left(k\right) \right\| 
    + \left\| z_i\left(k\right) - v_i\left(k\right)\right\|\\
&\leq   \left\| \mathrm{prox}_{\alpha_k f_i} \left(v_i\left(k\right) \right) -  v_i\left(k\right) \right\| 
    + 2 \left\| z_i\left(k\right) - v_i\left(k\right)\right\|,  
\end{align*}
which, together with $(a+b)^2 \leq 2(a^2 + b^2)$ $(a,b\in\mathbb{R})$ and the definition of $z_i(k)$ $(i\in V, k\geq 0)$, 
implies that, for all $i\in V$ and for all $k\geq 0$,
\begin{align*}
\left\| e_i\left(k\right) \right\|^2
\leq 2 \left\| \mathrm{prox}_{\alpha_k f_i} \left(v_i\left(k\right) \right) -  v_i\left(k\right) \right\|^2 
  + 4 \mathrm{d}\left( v_i\left(k\right), X  \right)^2.
\end{align*}
Accordingly, Lemma \ref{lem:4.2} ensures that $\sum_{k=0}^\infty \| e_i(k)\|^2 < \infty$
almost surely for all $i\in V$.

Moreover, from $ab \leq (1/2)(a^2 + b^2)$ $(a,b\in\mathbb{R})$, we find that 
$\alpha_k \| e_i(k)\| \leq (1/2)(\alpha_k^2 + \|e_i(k)\|^2)$ $(i\in V,k\geq 0)$.
Hence, (C2) ensures that, for all $i\in V$, almost surely
$\sum_{k=0}^\infty \alpha_k \| e_i (k) \|
\leq (1/2) (\sum_{k=0}^\infty \alpha_k^2 + \sum_{k=0}^\infty \|e_i (k)\|^2 ) < \infty$,
which, together with Lemma \ref{lem:4.3}, implies that, for all $i,j\in V$ almost surely
\begin{align}\label{ineq:3.4.1}
\sum_{k=0}^\infty \alpha_k \left\| x_i\left(k\right) - x_j\left(k\right) \right\| < \infty.
\end{align}
The definitions of $\bar{v}(k)$ and $v_l(k)$ $(l\in V,k\geq 0)$ and Assumption \ref{assum:5}(iii) guarantee that, 
for all $k\geq 0$,
$\bar{v}(k) := (1/m) \sum_{l=1}^m v_l(k) = (1/m) \sum_{l=1}^m (\sum_{j=1}^m [W(k)]_{lj} x_j(k))
= (1/m) \sum_{j=1}^m \sum_{l=1}^m [W(k)]_{lj} x_j(k) = (1/m) \sum_{j=1}^m x_j(k)$.
Accordingly, Assumption \ref{assum:5}(iii) and the triangle inequality ensure that, for all $i\in V$ and for all $k\geq 0$,
\begin{align*}
\left\| v_i\left(k\right) - \bar{v}\left(k\right) \right\|
&\leq \sum_{j=1}^m \left[W\left(k\right)\right]_{ij} \left\| x_j\left(k\right) - \frac{1}{m} \sum_{l=1}^m x_l\left(k\right) \right\|,
\end{align*}
which, together with $[W(k)]_{ij} \leq 1$ $(i,j\in V)$, $x_j(k)= (1/m)\sum_{l=1}^m x_j(k)$ $(j\in V,k\geq 0)$, and the convexity of $\|\cdot\|$, implies that, 
for all $i\in V$ and for all $k\geq 0$,
\begin{align*}
\left\| v_i\left(k\right) - \bar{v}\left(k\right) \right\| 
\leq \frac{1}{m} \sum_{j=1}^m \sum_{l=1}^m \left\| x_j\left(k\right) - x_l\left(k\right) \right\|.
\end{align*}
Therefore, \eqref{ineq:3.4.1} leads to $\sum_{k=0}^\infty \alpha_k \| v_i(k) - \bar{v}(k)\| < \infty$
almost surely for all $i\in V$.
This completes the proof.
\end{proof}

Next, let us show the following lemma.

\begin{lem}\label{lem:4.5}
Suppose that the assumptions in Theorem \ref{thm:2} are satisfied, and define $z_i(k) := P_X(v_i(k))$ for all $i\in V$ and for all $k\geq 0$ and 
$\bar{z}(k) := (1/m) \sum_{i=1}^m z_i(k)$ for all $k\geq 0$.
Then, the sequence $(\| x_i(k) - x^\star \|)_{k\geq 0}$ converges almost surely for all $i\in V$ and for all $x^\star \in X^\star$
and $\liminf_{k\to\infty} f(\bar{z}(k)) = f^\star$ almost surely.
\end{lem}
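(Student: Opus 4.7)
The plan is to apply Lemma \ref{lem:4.1} with $x = x^\star \in X^\star$ and $z = z_i(k) = P_X(v_i(k))$, noting that $z_i(k) \in X \subseteq X_i$ so the hypotheses are satisfied. Selecting $\tau = 1/(6c)$, $\eta = 1/3$, $\mu = 3/2$ (as in the proof of Lemma \ref{lem:4.2}) makes the coefficients of $\|v_i(k) - \mathrm{prox}_{\alpha_k f_i}(v_i(k))\|^2$ and $\|x_i(k+1) - v_i(k)\|^2$ nonpositive, while $\|v_i(k) - z_i(k)\|^2 = \mathrm{d}(v_i(k),X)^2$ is almost surely summable by Lemma \ref{lem:4.2}. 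Combining this with the convexity estimate $\|v_i(k) - x^\star\|^2 \le \sum_j w_{ij}(k) \|x_j(k) - x^\star\|^2$ produces
\[ \|x_i(k+1) - x^\star\|^2 \le \sum_{j=1}^m w_{ij}(k) \|x_j(k) - x^\star\|^2 - 2\alpha_k \bigl(f_i(z_i(k)) - f_i(x^\star)\bigr) + R_i(k), \]
where $R_i(k) \ge 0$ and $\sum_k R_i(k) < \infty$ almost surely for every $i \in V$.

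Next, summing over $i$ and invoking the column-stochasticity of $W(k)$ (Assumption \ref{assum:5}(iii)) collapses the weighted sum to $\sum_j \|x_j(k) - x^\star\|^2$. To expose $\bar{z}(k)$, the plan is to decompose
\[ \sum_{i=1}^m \bigl(f_i(z_i(k)) - f_i(x^\star)\bigr) = \bigl(f(\bar z(k)) - f^\star\bigr) + \sum_{i=1}^m \bigl(f_i(z_i(k)) - f_i(\bar z(k))\bigr), \]
where the first term is nonnegative because $\bar z(k) \in X$ (a convex combination of elements of the convex set $X$). The second is bounded, via Proposition \ref{prop:1}(iv) together with (A3), by $\sum_i M_i \|z_i(k) - \bar z(k)\|$; nonexpansivity of $P_X$ and the triangle inequality give $\sum_i \|z_i(k) - \bar z(k)\| \le 2\sum_i \|v_i(k) - \bar v(k)\|$, and Lemma \ref{lem:4.4} ensures $\sum_k \alpha_k \sum_i \|v_i(k) - \bar v(k)\| < \infty$ almost surely. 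Absorbing this Lipschitz slack into a nonnegative remainder $Q_k$ with $\sum_k Q_k < \infty$ almost surely, and taking conditional expectation with respect to $\mathcal{F}_k$, yields
\[ \mathrm{E}\!\left[\sum_{i=1}^m \|x_i(k+1) - x^\star\|^2 \,\Big|\, \mathcal{F}_k\right] \le \sum_{j=1}^m \|x_j(k) - x^\star\|^2 - 2\alpha_k \bigl(f(\bar z(k)) - f^\star\bigr) + Q_k, \]
which fits Proposition \ref{prop:2} with $Z_k := 2\alpha_k(f(\bar z(k)) - f^\star) \ge 0$. This delivers both convergence of $\sum_i \|x_i(k) - x^\star\|^2$ and $\sum_k \alpha_k(f(\bar z(k)) - f^\star) < \infty$ almost surely; combined with (C1), the latter forces $\liminf_k f(\bar z(k)) = f^\star$ almost surely.

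The main obstacle will be upgrading convergence of the summed quantity $\sum_i \|x_i(k) - x^\star\|^2$ to convergence of each individual $\|x_i(k) - x^\star\|$, since the recursion respects only the sum. The plan here is to combine the almost sure boundedness of the $x_i(k)$ (a byproduct of the sum converging) with $\|e_i(k)\| \to 0$ (which follows from $\sum_k \|e_i(k)\|^2 < \infty$ in Lemma \ref{lem:4.4}) and the ergodicity of the matrices $W(k)$ (Assumption \ref{assum:4}); a standard consensus-perturbation estimate under these conditions forces $\|x_i(k) - x_j(k)\| \to 0$ almost surely, so $\bigl|\|x_i(k) - x^\star\|^2 - (1/m)\sum_j \|x_j(k) - x^\star\|^2\bigr|$ vanishes and every individual $\|x_i(k) - x^\star\|$ converges to the common limit of the averaged sequence.
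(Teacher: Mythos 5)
Your proposal is correct and, in its core, follows the same route as the paper's proof: apply Lemma \ref{lem:4.1} with $x=x^\star$ and $z=z_i(k)$, take $\tau=1/(6c)$, $\eta=1/3$, $\mu=3/2$, pass from $\sum_i f_i(z_i(k))$ to $f(\bar z(k))$ via a subgradient of $f_i$ at $\bar z(k)\in X\subset X_i$ bounded through (A3) together with $\sum_{i}\|z_i(k)-\bar z(k)\|\le 2\sum_{i}\|v_i(k)-\bar v(k)\|$, and then invoke Proposition \ref{prop:2}, Lemma \ref{lem:4.4}, and (C1) (contradiction argument) for $\liminf_{k\to\infty}f(\bar z(k))=f^\star$. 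Two differences are worth recording. First, you discard the $\mathrm{d}(v_i(k),X)^2$ term by its almost sure summability from Lemma \ref{lem:4.2}, whereas the paper keeps it and cancels it against $-(1-1/\mu)\,\mathrm{d}(v_i(k),X_i^{\Omega_i(k)})^2$ via Assumption \ref{assum:3} after conditioning; both are legitimate, but phrase the Lipschitz slack through the subgradient inequality at $\bar z(k)$ (as the paper does) rather than Proposition \ref{prop:1}(iv), since (A3) bounds subgradients only on $X_i$ while Proposition \ref{prop:1}(iv) is a statement on all of $\mathbb{R}^d$. Second, your last paragraph genuinely adds something: the paper applies Proposition \ref{prop:2} to the summed inequality and then simply asserts per-user convergence of $\|x_i(k)-x^\star\|$, whereas you observe that the recursion only controls $\sum_i\|x_i(k)-x^\star\|^2$ and close this gap with a perturbed-consensus argument ($\|e_i(k)\|\to 0$ from Lemma \ref{lem:4.4}, geometric mixing of products of the doubly stochastic matrices $W(k)$ under Assumptions \ref{assum:4}--\ref{assum:5}, hence $\|x_i(k)-x_j(k)\|\to 0$, and almost sure boundedness transfers convergence of the sum to each summand). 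That consensus estimate is standard in this literature and your use of it is sound, but it is not proved in the paper (Lemma \ref{lem:4.3} only gives $\sum_k\alpha_k\|x_i(k)-x_j(k)\|<\infty$), so to make the argument self-contained you should cite or reproduce the geometric convergence of the transition matrices; with that, your proof is complete.
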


\begin{proof}
Choose $x^\star \in X^\star$ arbitrarily.
The convexity of $\|\cdot\|^2$ and Assumption \ref{assum:5} imply that 
$\sum_{i=1}^m \| v_i(k) - x^\star\|^2 \leq \sum_{j=1}^m \| x_j(k) - x^\star\|^2$ $(k\geq 0)$.
Lemma \ref{lem:4.1} implies that, for all $k\geq 0$ and for all $\tau, \eta, \mu > 0$,
\begin{align*}
\sum_{i=1}^m \left\| x_i \left(k+1\right) -x^\star  \right\|^2 
&\leq 
\sum_{i=1}^m \left\| x_i \left(k\right) -x^\star  \right\|^2 -2 \alpha_k \sum_{i=1}^m \left(f\left(z_i\left(k\right)\right) -f_i\left(x^\star\right) \right)\\
&\quad + \tau \sum_{i=1}^m \left\| v_i\left(k\right) - z_i\left(k\right) \right\|^2
       - \left(1 - \frac{1}{\mu} \right) \sum_{i=1}^m \left\| x_i\left(k+1\right) - v_i\left(k\right) \right\|^2\\
&\quad + \left( \eta + \mu -2 \right) \sum_{i=1}^m \left\| v_i\left(k\right) - \mathrm{prox}_{\alpha_k f_i} \left(v_i\left(k\right) \right)\right\|^2
+ mN\left(\tau, \eta\right)\alpha_k^2.
\end{align*}
From $z_i(k) \in X$ $(i\in V, k\geq 0)$, the convexity of $X$ ensures that $\bar{z}(k) \in X \subset X_i$ $(i\in V)$.
Accordingly, (A3) means that $\|\bar{g}_i(k) \| \leq M_i$ for all $\bar{g}_i(k) \in \partial f_i(\bar{z}(k))$ $(i\in V,k\geq 0)$.
The definition of $\partial f_i$ $(i\in V)$ and the Cauchy-Schwarz inequality thus guarantee that, for all $i\in V$ and for all $k\geq 0$,
$f_i(z_i(k)) -f_i(x^\star)
= 
f_i(z_i(k)) - f_i(\bar{z}(k)) + f_i(\bar{z}(k)) -  f_i(x^\star)
\geq \langle z_i(k) - \bar{z}(k), \bar{g}_i(k) \rangle + f_i(\bar{z}(k)) -  f_i(x^\star)
\geq - \bar{M} \| z_i(k) - \bar{z}(k)\| + f_i(\bar{z}(k)) -  f_i(x^\star)$,
where $\bar{M} := \max_{i\in V} M_i < \infty$.
Moreover, the convexity of $\|\cdot\|$ and the nonexpansivity of $P_X$ imply that, for all $i\in V$ and for all $k\geq 0$,
$\| z_i(k) - \bar{z}(k)\|
= \| (1/m) \sum_{l=1}^m (P_X (v_i (k)) - z_l(k))\|
\leq 
(1/m) \sum_{l=1}^m \| P_X (v_i(k)) - P_X(v_l(k))\|
\leq 
(1/m) \sum_{l=1}^m \| v_i(k) - v_l(k)\|$,
which, together with the triangle inequality, implies that, for all $i\in V$ and for all $k\geq 0$,
$\| z_i(k) - \bar{z}(k)\|
\leq 
\| v_i(k) - \bar{v}(k) \| + (1/m) \sum_{l=1}^m \| v_l(k) - \bar{v}(k) \|$,
where $\bar{v}(k) := (1/m) \sum_{l=1}^m v_l(k)$ $(k\geq 0)$.
Accordingly, for all $i\in V$ and for all $k\geq 0$,
\begin{align}\label{fi}
\begin{split}
f_i\left(z_i\left(k\right) \right) -f_i\left(x^\star \right)
&\geq 
- \bar{M} \left\| v_i\left(k\right) - \bar{v}\left(k\right) \right\| - \frac{\bar{M}}{m} \sum_{l=1}^m \left\| v_l\left(k\right) - \bar{v}\left(k\right) \right\|\\
&\quad + f_i \left(\bar{z}\left(k\right)\right) -  f_i\left(x^\star\right).
\end{split}
\end{align}
Hence, 
the definitions of $f$ and $f^\star$ imply that, for all $k\geq 0$ and for all $\tau,\eta,\mu > 0$,
\begin{align*}
\sum_{i=1}^m \left\| x_i \left(k+1\right) -x^\star  \right\|^2 
&\leq 
\sum_{i=1}^m \left\| x_i \left(k\right) -x^\star  \right\|^2 
+ 2 \bar{M} \alpha_k \sum_{i=1}^m \left\| v_i\left(k\right) - \bar{v}\left(k\right) \right\|\\
&\quad + 2 \bar{M} \alpha_k \sum_{l=1}^m \left\| v_l\left(k\right) - \bar{v}\left(k\right) \right\|
-2 \alpha_k \left(f \left(\bar{z}\left(k\right) \right) -f^\star \right)\\
&\quad + \tau \sum_{i=1}^m \left\| v_i\left(k\right) - z_i\left(k\right) \right\|^2
- \left(1 - \frac{1}{\mu} \right) \sum_{i=1}^m \left\| x_i\left(k+1\right) - v_i\left(k\right) \right\|^2\\
&\quad + \left( \eta + \mu -2 \right) \sum_{i=1}^m \left\| v_i\left(k\right) - \mathrm{prox}_{\alpha_k f_i} \left(v_i\left(k\right) \right)\right\|^2 + m N\left(\tau, \eta\right)\alpha_k^2,
\end{align*}
which, together with $\mathrm{d}(v_i(k),X) = \| v_i(k) - z_i(k) \|$ and 
$\mathrm{d}(v_i(k), X_i^{\Omega_i(k)}) \leq \| v_i(k) - x_i(k+1)\|$ $(i\in V,k\geq 0)$, implies that,
for all $k\geq 0$, for all $\tau,\eta > 0$, and for all $\mu > 1$,
\begin{align*}
\sum_{i=1}^m \left\| x_i \left(k+1\right) -x^\star  \right\|^2
&\leq 
\sum_{i=1}^m \left\| x_i \left(k\right) -x^\star  \right\|^2 
+ 4 \bar{M} \alpha_k \sum_{i=1}^m \left\| v_i\left(k\right) - \bar{v}\left(k\right) \right\|\\
&\quad 
-2 \alpha_k \left(f \left(\bar{z}\left(k\right) \right) -f^\star \right) 
+ \tau \sum_{i=1}^m \mathrm{d} \left(v_i\left(k\right), X \right)^2\\ 
&\quad - \left( 1 - \frac{1}{\mu} \right) \sum_{i=1}^m \mathrm{d} \left( v_i\left(k\right), X_i^{\Omega_i\left(k\right)}\right)^2\\
&\quad + \left( \eta + \mu -2 \right) \sum_{i=1}^m \left\| v_i\left(k\right) - \mathrm{prox}_{\alpha_k f_i} \left(v_i\left(k\right) \right)\right\|^2 + m N\left(\tau, \eta\right)\alpha_k^2. 
\end{align*}
Accordingly, Assumption \ref{assum:3} guarantees that, almost surely
for all $k\geq 0$, for all $\tau,\eta > 0$, and for all $\mu > 1$,
\begin{align}
\mathrm{E} \left[\sum_{i=1}^m \left\| x_i \left(k+1\right) -x^\star  \right\|^2 \Bigg| \mathcal{F}_k \right]
&\leq 
\sum_{i=1}^m \left\| x_i \left(k\right) -x^\star  \right\|^2 
+ 4 \bar{M} \alpha_k \sum_{i=1}^m \left\| v_i\left(k\right) - \bar{v}\left(k\right) \right\|\nonumber\\
&\quad 
-2 \alpha_k \left(f \left(\bar{z}\left(k\right) \right) -f^\star \right)\nonumber\\
&\quad + \left\{ \tau - \frac{1}{c}\left(1 - \frac{1}{\mu} \right) \right\} \sum_{i=1}^m \mathrm{d} \left(v_i\left(k\right), X \right)^2\nonumber\\ 
&\quad + \left( \eta + \mu -2 \right) \sum_{i=1}^m \left\| v_i\left(k\right) - \mathrm{prox}_{\alpha_k f_i} \left(v_i\left(k\right) \right)\right\|^2\nonumber\\
&\quad + m N\left(\tau, \eta\right)\alpha_k^2.\label{key2} 
\end{align}
Taking $\tau := 1/(6c)$, $\eta := 1/3$, and $\mu := 3/2$ (also see proof of Lemma \ref{lem:4.2}) 
in the inequality above leads to the finding that, almost surely, for all $k\geq 0$,
\begin{align*}
\mathrm{E} \left[\sum_{i=1}^m \left\| x_i \left(k+1\right) -x^\star  \right\|^2 \Bigg| \mathcal{F}_k \right]
&\leq 
\sum_{i=1}^m \left\| x_i \left(k\right) -x^\star  \right\|^2
-2 \alpha_k \left(f \left(\bar{z}\left(k\right) \right) -f^\star \right)\\ 
&\quad + 4 \bar{M} \alpha_k \sum_{i=1}^m \left\| v_i\left(k\right) - \bar{v}\left(k\right) \right\|
+ m N\left(\frac{1}{6c}, \frac{1}{3}\right)\alpha_k^2.
\end{align*}
Therefore, since $\bar{z}(k) \in X$ implies $f(\bar{z}(k)) - f^\star \geq 0$ $(k\geq 0)$,
Proposition \ref{prop:2}, (C2), and Lemma \ref{lem:4.4} ensure that 
$(\| x_i(k) -x^\star \|)_{k\geq 0}$ converges almost surely for all $i\in V$.
Moreover,
\begin{align}\label{eq:3.5.1}
\sum_{k=0}^\infty \alpha_k \left(f \left(\bar{z}\left(k\right) \right) -f^\star \right) < \infty
\end{align}
is almost surely satisfied.
Now, under the assumption that almost surely $\liminf_{k\to\infty} f(\bar{z}(k)) -f^\star > 0$, 
$k_1 > 0$ and $\gamma > 0$ can be chosen such that 
$f(\bar{z}(k)) -f^\star \geq \gamma$ almost surely for all $k \geq k_1$. 
Accordingly, \eqref{eq:3.5.1} and (C1) mean that, almost surely
\begin{align*}
\infty = \gamma \sum_{k=k_1}^\infty \alpha_k \leq \sum_{k=k_1}^\infty \alpha_k \left(f \left(\bar{z}\left(k\right) \right) -f^\star \right) < \infty,
\end{align*}
which is a contradiction. Therefore, $\liminf_{k\to\infty} f(\bar{z}(k)) \leq f^\star$ almost surely.
From $f(\bar{z}(k)) - f^\star \geq 0$ $(k\geq 0)$, $\liminf_{k\to\infty} f(\bar{z}(k)) = f^\star$ almost surely.
This completes the proof.
\end{proof}

Now we are in position to prove Theorem \ref{thm:2}. 

\begin{proof}
Lemma \ref{lem:4.5} guarantees the almost sure convergence of $(x_i(k))_{k\geq 0}$ $(i\in V)$.
From \eqref{viprox}, $(v_i(k))_{k\geq 0}$ $(i\in V)$ also converges almost surely.
The definition of $\bar{v}(k)$ $(k\geq 0)$ implies the almost sure convergence of $(\bar{v}(k))_{k\geq 0}$.
Moreover, Lemma \ref{lem:4.2} implies that, for all $i\in V$, $\sum_{k=0}^\infty \mathrm{d}(v_i(k),X)^2 = \sum_{k=0}^\infty \| v_i(k) - z_i(k)\|^2 < \infty$
almost surely; i.e., $\lim_{k\to\infty}\| v_i(k) - z_i(k)\| =0$ almost surely.
Accordingly, $(z_i(k))_{k\geq 0}$ $(i\in V)$ converges almost surely.
This implies that there exists $x^*\in\mathbb{R}^d$ such that $(\bar{z}(k))_{k\geq 0}$ converges almost surely to $x^*$;
i.e., $(\bar{z}(k))_{k\geq 0}$ converges in law to $x^*$.
Hence, the closedness of $X$ and Proposition \ref{prop:3} guarantee that $\limsup_{k\to\infty} \mathrm{Pr}\{ \bar{z}(k) \in X \} \leq \mathrm{Pr} \{ x^* \in X \}$.
Since the definition of $\bar{z}(k)$ $(k\geq 0)$ implies that $\mathrm{Pr} \{ \bar{z}(k) \in X \}=1$ $(k\geq 0)$,
we find that $\mathrm{Pr} \{ x^* \in X \}=1$.
Moreover, Lemma \ref{lem:4.5} and the continuity of $f$ ensure that, almost surely
\begin{align*}
f \left(x^* \right) =  \lim_{k\to\infty} f \left( \bar{z}(k) \right) = \liminf_{k\to\infty} f \left( \bar{z}(k) \right) = f^\star;
\text{ i.e., } x^* \in X^\star.
\end{align*}
The definitions of $\bar{v}(k)$ and $\bar{z}(k)$ $(k\geq 0)$ mean that, for all $k\geq 0$,
$\| \bar{v}(k) - \bar{z}(k) \| \leq (1/m) \sum_{i=1}^m \| z_i(k) - v_i(k)\|$,
which, together with $\lim_{k\to\infty}\| v_i(k) - z_i(k)\| =0$ almost surely for all $i\in V$, implies that
$\lim_{k\to\infty}\| \bar{v}(k) - \bar{z}(k)\| =0$ almost surely.
Accordingly, the almost sure convergence of $(\bar{z}(k))_{k\geq 0}$ to $x^* \in X^\star$ guarantees that 
$(\bar{v}(k))_{k\geq 0}$ also converges almost surely to the same $x^*\in X^\star$.

Since Lemma \ref{lem:4.4} implies that $\sum_{k=0}^\infty \alpha_k \| v_i(k) -\bar{v}(k)\|< \infty$ almost surely for all $i\in V$,
a discussion similar to the one for obtaining $\liminf_{k\to\infty} f(\bar{z}(k)) \leq f^\star$ almost surely (see proof of Lemma \ref{lem:4.5}) and (C1) 
guarantee that $\liminf_{k\to\infty} \| v_i(k) -\bar{v}(k)\| = 0$ almost surely for all $i\in V$.
Moreover, the triangle inequality implies that $\| v_i(k) - x^* \| \leq \| v_i(k) -\bar{v}(k) \| + \| \bar{v}(k) - x^*\|$
$(i\in V, k\geq 0)$.
Hence, from $\lim_{k\to\infty} \|\bar{v}(k) - x^*\| = 0$ and $\liminf_{k\to\infty} \| v_i(k) -\bar{v}(k)\| = 0$ $(i\in V)$ almost surely,
we find that $\liminf_{k\to\infty} \| v_i(k) - x^*\| = 0$ almost surely for all $i\in V$.
Therefore, the almost sure convergence of $(v_i(k))_{k\geq 0}$ $(i\in V)$ leads to the finding that, for all $i\in V$,
\begin{align}\label{almost_vi}
\lim_{k\to\infty} \left\| v_i\left(k\right) - x^* \right\| = 0 \text{ almost surely}.
\end{align}
Since Lemma \ref{lem:4.4} ensures that, for all $i\in V$, $\lim_{k\to\infty} \|e_i(k)\|^2 = \lim_{k\to\infty} \| x_i(k+1) - v_i(k) \|^2 = 0$ almost surely, $(x_i(k))_{k\geq 0}$ $(i\in V)$ converges almost surely to $x^* \in X^\star$. This completes the proof.
\end{proof}

\subsection{Convergence rate analysis for Algorithm \ref{algo:2}}\label{subsec:4.2}
The discussion in subsection \ref{subsec:4.1} leads to the finding that the sequence of the feasibility error 
$(\mathrm{d}(x_i(k),X)^2)_{k\geq 0}$ and the sequence of the iteration error $(\|x_i(k) - x^\star\|^2)_{k\geq 0}$
are stochastically decreasing in the sense of the inequalities in the following proposition.

\begin{prop}\label{prop:thm:2}
Suppose that the assumptions in Theorem \ref{thm:2} hold, $x^\star \in X^\star$ is a solution to Problem \ref{prob:1}, 
and $(x_i(k))_{k\geq 0}$ ($i\in V$) is the sequence generated by Algorithm \ref{algo:2}. 
Then, there exist $\beta^{(j)} > 0$ ($j=1,2,3,4,5$) such that, almost surely for all $k\geq 0$,
\begin{align*}
&\mathrm{E} \left[ \sum_{i=1}^m \mathrm{d} \left( x_i\left(k+1\right), X  \right)^2 \Bigg| \mathcal{F}_k  \right]
\leq  \sum_{i=1}^m \mathrm{d} \left( x_i\left(k\right), X  \right)^2 
      - \sum_{j=1,2} \beta^{(j)} \gamma_k^{(j)} 
      + \beta^{(3)} \gamma_k^{(3)},\\
&\mathrm{E} \left[ \sum_{i=1}^m \left\| x_i\left(k+1\right) - x^\star  \right\|^2 \Bigg| \mathcal{F}_k  \right]
\leq  \sum_{i=1}^m \left\| x_i\left(k\right) - x^\star  \right\|^2 
      - \sum_{j=1,2,5} \beta^{(j)} \gamma_k^{(j)} 
      + \sum_{j=3,4} \beta^{(j)} \gamma_k^{(j)},
\end{align*}
where $\gamma_k^{(1)} := \sum_{i=1}^m \mathrm{d}(v_i(k),X)^2$,
$\gamma_k^{(2)} := \sum_{i=1}^m \| v_i(k) - \mathrm{prox}_{\alpha_k f_i} (v_i(k)) \|^2$,
$\gamma_k^{(3)} := \alpha_k^2$,
$\gamma_k^{(4)} := \alpha_k \sum_{i=1}^m \|v_i(k) - \bar{v}(k)  \|$, and
$\gamma_k^{(5)} := \alpha_k (f(\bar{z}(k)) -f^\star)$ ($k\geq 0$) satisfy $\sum_{k=0}^\infty \gamma_k^{(j)} < \infty$
($j=1,2,3,4,5$).
\end{prop}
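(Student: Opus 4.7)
The plan is to extract the two inequalities directly from the key estimates already established in the proofs of Lemma \ref{lem:4.2} and Lemma \ref{lem:4.5}, and then to verify the summability of each $\gamma_k^{(j)}$ by appealing to the lemmas that preceded the theorem.

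For the first inequality, I would point to \eqref{key1}, which states that, almost surely, for all $k\geq 0$,
\begin{align*}
\mathrm{E}\!\left[ \sum_{i=1}^m \mathrm{d}\!\left(x_i(k+1),X\right)^2 \Big| \mathcal{F}_k \right]
&\leq \sum_{j=1}^m \mathrm{d}\!\left(x_j(k),X\right)^2 - \frac{1}{6c}\sum_{i=1}^m \mathrm{d}\!\left(v_i(k),X\right)^2\\
&\quad - \frac{1}{6}\sum_{i=1}^m \left\| v_i(k) - \mathrm{prox}_{\alpha_k f_i}(v_i(k))\right\|^2 + m\, N\!\left(\tfrac{1}{6c},\tfrac{1}{3}\right)\alpha_k^2.
\end{align*}
This is exactly the desired bound with the choices $\beta^{(1)} := 1/(6c)$, $\beta^{(2)} := 1/6$, and $\beta^{(3)} := m N(1/(6c),1/3)$.

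For the second inequality, I would return to the intermediate estimate in the proof of Lemma \ref{lem:4.5}: starting from \eqref{key2} and specializing to $\tau := 1/(6c)$, $\eta := 1/3$, $\mu := 3/2$, we obtain (keeping all terms rather than discarding the nonpositive ones as was done there)
\begin{align*}
\mathrm{E}\!\left[ \sum_{i=1}^m \left\| x_i(k+1) - x^\star\right\|^2 \Big| \mathcal{F}_k \right]
&\leq \sum_{i=1}^m \left\| x_i(k) - x^\star\right\|^2 - \frac{1}{6c}\sum_{i=1}^m \mathrm{d}\!\left(v_i(k),X\right)^2\\
&\quad - \frac{1}{6}\sum_{i=1}^m \left\| v_i(k) - \mathrm{prox}_{\alpha_k f_i}(v_i(k))\right\|^2 - 2\alpha_k\!\left(f(\bar{z}(k)) - f^\star\right)\\
&\quad + 4\bar{M}\alpha_k \sum_{i=1}^m \left\| v_i(k) - \bar{v}(k)\right\| + m N\!\left(\tfrac{1}{6c},\tfrac{1}{3}\right)\alpha_k^2.
\end{align*}
Matching this against the required form gives $\beta^{(1)} = 1/(6c)$, $\beta^{(2)} = 1/6$, $\beta^{(5)} := 2$ for the negative contributions, and $\beta^{(3)} = m N(1/(6c),1/3)$, $\beta^{(4)} := 4\bar{M}$ for the positive ones. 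Since $\bar{z}(k)\in X$, the sign condition $f(\bar{z}(k))-f^\star \geq 0$ ensures $\gamma_k^{(5)}\geq 0$, which is needed to legitimately treat $\beta^{(5)}\gamma_k^{(5)}$ as a \emph{subtractive} term.

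For the summability claims, I would invoke the already established results in order: $\sum_k \gamma_k^{(1)} < \infty$ and $\sum_k \gamma_k^{(2)} < \infty$ follow almost surely from Lemma \ref{lem:4.2}; $\sum_k \gamma_k^{(3)} < \infty$ is exactly condition (C2) in Assumption \ref{assum:5}; $\sum_k \gamma_k^{(4)} < \infty$ is precisely the conclusion of Lemma \ref{lem:4.4}; and $\sum_k \gamma_k^{(5)} < \infty$ is the bound \eqref{eq:3.5.1} derived inside the proof of Lemma \ref{lem:4.5}. I anticipate no real obstacle, since the proposition is essentially a bookkeeping restatement of inequalities already proved; the only subtlety is being careful to retain, rather than drop, the two nonpositive terms $-(1/(6c))\gamma_k^{(1)}$ and $-(1/6)\gamma_k^{(2)}$ in the second estimate (which are discarded in Lemma \ref{lem:4.5}'s proof because they are not needed there), and to verify that $\gamma_k^{(5)}$ is genuinely nonnegative so that the asserted inequality has the stated sign structure.
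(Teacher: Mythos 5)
Your proposal is correct and follows essentially the same route as the paper: the first inequality is exactly \eqref{key1}, the second is \eqref{key2} specialized to $\tau = 1/(6c)$, $\eta = 1/3$, $\mu = 3/2$ with the nonpositive terms retained, and the summability claims are drawn from Lemma \ref{lem:4.2}, condition (C2), Lemma \ref{lem:4.4}, and \eqref{eq:3.5.1}, with the same choices of $\beta^{(j)}$ as in the paper. Your added remark that $f(\bar{z}(k)) - f^\star \geq 0$ justifies treating $\beta^{(5)}\gamma_k^{(5)}$ as a subtractive term is a sensible clarification consistent with the paper's argument.
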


\begin{proof}
Lemma \ref{lem:4.2} guarantees that 
$\gamma_k^{(1)} := \sum_{i=1}^m \mathrm{d}(v_i(k),X)^2$ and 
$\gamma_k^{(2)} := \sum_{i=1}^m \| v_i(k) - \mathrm{prox}_{\alpha_k f_i} (v_i(k)) \|^2$ $(k\geq 0)$ satisfy 
$\sum_{k=0}^\infty \gamma_k^{(j)} < \infty$ almost surely for $j=1,2$.
Condition (C2) implies that $\gamma_k^{(3)} := \alpha_k^2$ $(k\geq 0)$ satisfies $\sum_{k=0}^\infty \gamma_k^{(3)} < \infty$.
From Lemma \ref{lem:4.4} and \eqref{eq:3.5.1}, 
$\gamma_k^{(4)} := \alpha_k \sum_{i=1}^m \|v_i(k) - \bar{v}(k)  \|$ and
$\gamma_k^{(5)} := \alpha_k (f(\bar{z}(k)) -f^\star)$ $(k\geq 0)$ 
also satisfy 
$\sum_{k=0}^\infty \gamma_k^{(j)} < \infty$ almost surely for $j=4,5$.
Set $\tau := 1/(6c), \eta := 1/3$, and $\mu := 3/2$, and 
put $\beta^{(1)} := -(\tau - (1/c)(1-1/\mu)) =  1/(6c)$, 
$\beta^{(2)} := 2 - \eta - \mu = 1/6$, 
$\beta^{(3)} := m N(\tau, \eta)$,
$\beta^{(4)} := 4 \bar{M}$, 
and 
$\beta^{(5)} := 2$,
where $\beta^{(3)}, \beta^{(4)} < \infty$ hold from (A3) and $\bar{M} := \max_{i\in V} M_i$.
Accordingly, \eqref{key1} and \eqref{key2} ensure that Proposition \ref{prop:thm:2} holds.
\end{proof}

From $- \sum_{j=1,2} \beta^{(j)} \gamma_k^{(j)} + \beta^{(3)} \gamma_k^{(3)} \leq \beta^{(3)} \gamma_k^{(3)} = \beta^{(3)} \alpha_k^2$
and $- \sum_{j=1,2,5} \beta^{(j)} \gamma_k^{(j)} + \sum_{j=3,4} \beta^{(j)} \gamma_k^{(j)} \leq \sum_{j=3,4} \beta^{(j)} \gamma_k^{(j)}
= (\beta^{(3)} \alpha_k + \beta^{(4)} \sum_{i=1}^m \|v_i(k) - \bar{v}(k)  \|) \alpha_k$ $(k\geq 0)$,
Proposition \ref{prop:thm:2} and Theorem \ref{thm:2} (see also \eqref{almost_vi} for the almost sure boundedness of $(v_i(k))_{k\geq 0}$ $(i\in V)$) indicate that $(x_i(k))_{k\geq 0}$ $(i\in V)$ in Algorithm \ref{algo:2} converges almost surely to a solution to Problem \ref{prob:1} under the following convergence rates:
almost surely, for all $k\geq 0$,
\begin{align}\label{rate}
\begin{split}
&\mathrm{E} \left[ \sum_{i=1}^m \mathrm{d} \left( x_i\left(k+1\right), X  \right)^2 \Bigg| \mathcal{F}_k  \right]
\leq \sum_{i=1}^m \mathrm{d} \left( x_i\left(k\right), X  \right)^2 
+ O\left(\alpha_k^2\right),\\
&\mathrm{E} \left[ \sum_{i=1}^m \left\| x_i\left(k+1\right) - x^\star  \right\|^2 \Bigg| \mathcal{F}_k  \right]
\leq  \sum_{i=1}^m \left\| x_i\left(k\right) - x^\star  \right\|^2 
+ O \left(\alpha_k\right).
\end{split}
\end{align} 

From Proposition \ref{prop:thm:2}, the convergence rate of Algorithm \ref{algo:2} depends on $\beta^{(j)}$ and $(\gamma_k^{(j)})_{k\geq 0}$ $(j=1,2,3,4,5)$; i.e., the number of users $m$ and the step size sequence $(\alpha_k)_{k\geq 0}$. When $m$ is fixed, it is desirable to set $(\alpha_k)_{k\geq 0}$ so that, for all $k\geq 0$, $\mathrm{E}[ \sum_{i=1}^m \mathrm{d} ( x_i(k+1), X)^2 | \mathcal{F}_k] < \sum_{i=1}^m \mathrm{d}( x_i(k), X)^2$ and $\mathrm{E} [\sum_{i=1}^m \| x_i(k+1) - x^\star \|^2 | \mathcal{F}_k] < \sum_{i=1}^m \| x_i(k) - x^\star \|^2$ are almost surely satisfied. Accordingly, Algorithm \ref{algo:2} has fast convergence if $(\alpha_k)_{k\geq 0}$ can be chosen so as to satisfy
\begin{align*}
- \sum_{j=1,2} \beta^{(j)} \gamma_k^{(j)} + \beta^{(3)} \gamma_k^{(3)} < 0 \text{ and }
- \sum_{j=1,2,5} \beta^{(j)} \gamma_k^{(j)} + \sum_{j=3,4} \beta^{(j)} \gamma_k^{(j)} < 0 \text{ } (k\geq 0).
\end{align*}
Hence, it would be desirable to set $(\alpha_k)_{k\geq 0}$ so as to satisfy
$\sum_{j=3,4} \beta^{(j)} \gamma_k^{(j)} = \beta^{(3)} \alpha_k^2 + \beta^{(4)} \alpha_k \sum_{i=1}^m \|v_i(k) - \bar{v}(k)  \| \approx 0$ as much as possible, e.g., to set $\alpha_k := \alpha/(k+1)$ with a small positive constant $\alpha$.
Section \ref{sec:5} gives numerical examples such that Algorithm \ref{algo:2} with $\alpha_k := 10^{-3}/(k+1)$ has faster convergence than
Algorithm \ref{algo:2} with $\alpha_k := 1/(k+1)$.

\section{Distributed random projected subgradient algorithm}\label{sec:3}
This section presents the following subgradient algorithm with random projections for solving Problem \ref{prob:1}.

\begin{algo}\label{algo:1}
\text{ }

\begin{enumerate}
\item[Step 0.]
User $i$ ($i\in V$) sets $x_i(0) \in \mathbb{R}^d$ arbitrarily. 
\item[Step 1.]
User $i$ ($i\in V$) receives $x_j(k)$ from its neighboring users $j\in N_i(k)$ 
and computes the weighted average $v_i(k)$ defined as in \eqref{viprox}
and the subgradient $g_i(k) \in \partial f_i ( v_i(k))$. 
User $i$ updates its estimate $x_i(k+1)$ by 
\begin{align}\label{xi}
x_i\left( k+1\right) := P_{X_i^{\Omega_i\left(k\right)}} \left(v_i\left(k\right) - \alpha_k g_i\left(k\right)  \right).
\end{align}
The algorithm sets $k:= k+1$ and returns to Step 1.
\end{enumerate}
\end{algo}

In this section, Assumption (A3) is replaced with 
\begin{enumerate}
\item[(A3)']
For all $i\in V$, there exists $M_i \in \mathbb{R}$ such that $\sup \{ \| g_i \| \colon x\in X_i, g_i \in \partial f_i (x)\} \leq M_i$.
For all $i\in V$, there exists $C_i \in \mathbb{R}$ such that 
$\sup \{ \| g_i(k) \| \colon g_i(k) \in \partial f_i (v_i(k)), k \geq 0 \} \leq C_i$.
\end{enumerate}
It is obvious that Assumption (A3)' is stronger than Assumption (A3). Assumption (A3)' holds when $f_i$ $(i\in V)$ is Lipschitz continuous on $\mathbb{R}^d$ (Proposition \ref{prop:1}(iv)). The boundedness of $(g_i(k))_{k\geq 0}$ $(i\in V)$ is needed to show that Algorithm \ref{algo:1} satisfies $\sum_{k=0}^\infty \mathrm{d}(v_i(k),X)^2 < \infty$ almost surely for all $i\in V$, which is the essential part of the convergence analysis of Algorithm \ref{algo:1} (see Lemmas \ref{lem:3.1} and \ref{lem:3.2} for details).

Let us do a convergence analysis of Algorithm \ref{algo:1}.

\begin{thm}\label{thm:1}
Under Assumptions \ref{assum:1}--\ref{assum:5},
the sequence $(x_i(k))_{k\geq 0}$ ($i\in V$) generated by Algorithm \ref{algo:1} 
converges almost surely to a random point $x^\star \in X^\star$.
\end{thm}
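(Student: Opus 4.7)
The plan is to mirror the proof of Theorem \ref{thm:2}, replacing the proximal-operator step with the subgradient step and exploiting the uniform bound $\|g_i(k)\|\le C_i$ from (A3)$'$ in place of the firm nonexpansivity of $\mathrm{prox}_{\alpha_k f_i}$. So I will first establish an analogue of Lemma \ref{lem:4.1}, then derive summability statements analogous to Lemmas \ref{lem:4.2} and \ref{lem:4.4}, and finally push everything through Lemma \ref{lem:4.5} and the portmanteau argument to conclude almost sure convergence.

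For the per-iteration inequality, fix $x\in X$, $i\in V$, $k\ge 0$, and $z\in X_i$. Using $x=P_{X_i^{\Omega_i(k)}}(x)$ and the firm nonexpansivity of $P_{X_i^{\Omega_i(k)}}$, I get
\begin{align*}
\|x_i(k+1)-x\|^2 &\le \|v_i(k)-\alpha_k g_i(k)-x\|^2-\|v_i(k)-\alpha_k g_i(k)-x_i(k+1)\|^2\\
&= \|v_i(k)-x\|^2-2\alpha_k\langle g_i(k),v_i(k)-x\rangle+\alpha_k^2\|g_i(k)\|^2\\
&\quad -\|v_i(k)-x_i(k+1)\|^2+2\alpha_k\langle g_i(k),v_i(k)-x_i(k+1)\rangle-\alpha_k^2\|g_i(k)\|^2.
\end{align*}
Using the subgradient inequality $\langle g_i(k),v_i(k)-x\rangle\ge f_i(v_i(k))-f_i(x)$, inserting $f_i(z)$ as a splitting point (as in \eqref{ineq:4.2}), and applying $2|a||b|\le\tau a^2+(1/\tau)b^2$ exactly as in \eqref{ineq:4.3} to the cross terms $2\alpha_k\langle g_i(z),z-v_i(k)\rangle$ and $2\alpha_k\langle g_i(k),v_i(k)-x_i(k+1)\rangle$, I arrive at an inequality of the same shape as Lemma \ref{lem:4.1}, namely
\begin{align*}
\|x_i(k+1)-x\|^2
&\le \|v_i(k)-x\|^2-2\alpha_k(f_i(z)-f_i(x))+N'(\tau)\alpha_k^2\\
&\quad +\tau\|v_i(k)-z\|^2-\left(1-\frac{1}{\mu}\right)\|x_i(k+1)-v_i(k)\|^2,
\end{align*}
where $N'(\tau)$ is a finite constant depending on $\tau$, $\mu$, $\max_i M_i$, and $\max_i C_i$ (here (A3)$'$ is essential). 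Note the proximal-specific term $(\eta+\mu-2)\|v_i(k)-\mathrm{prox}_{\alpha_k f_i}(v_i(k))\|^2$ simply disappears.

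With this in hand, the analogues of Lemmas \ref{lem:4.2}--\ref{lem:4.5} go through almost verbatim. Choosing $x=z=P_X(v_i(k))$, taking conditional expectation on $\mathcal{F}_k$, applying Assumption \ref{assum:3}, and fixing $\tau,\mu$ so that the net coefficient of $\mathrm{d}(v_i(k),X)^2$ is strictly negative, the supermartingale convergence theorem (Proposition \ref{prop:2}) together with (C2) yields $\sum_{k=0}^\infty\mathrm{d}(v_i(k),X)^2<\infty$ almost surely for every $i\in V$. Bounding $\|e_i(k)\|=\|x_i(k+1)-v_i(k)\|$ by the triangle inequality through $z_i(k):=P_X(v_i(k))$ and $\alpha_k C_i$ (instead of the prox residual), and using $(a+b+c)^2\le 3(a^2+b^2+c^2)$, I obtain
\[
\|e_i(k)\|^2\le 3\alpha_k^2 C_i^2+12\,\mathrm{d}(v_i(k),X)^2,
\]
so $\sum_k\|e_i(k)\|^2<\infty$ a.s., hence $\sum_k\alpha_k\|e_i(k)\|<\infty$ by $ab\le\tfrac12(a^2+b^2)$; then Lemma \ref{lem:4.3} gives $\sum_k\alpha_k\|x_i(k)-x_j(k)\|<\infty$ a.s.\ and therefore $\sum_k\alpha_k\|v_i(k)-\bar v(k)\|<\infty$ a.s., exactly as in Lemma \ref{lem:4.4}.

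Finally, to mimic Lemma \ref{lem:4.5}, I set $x=x^\star\in X^\star$, sum over $i$, and use the same bound \eqref{fi} on $f_i(z_i(k))-f_i(x^\star)$ in terms of $\|v_i(k)-\bar v(k)\|$. After conditioning on $\mathcal{F}_k$ and invoking Assumption \ref{assum:3} and Proposition \ref{prop:2}, I obtain both the almost sure convergence of $(\|x_i(k)-x^\star\|)_{k\ge 0}$ for every $x^\star\in X^\star$ and $\sum_k\alpha_k(f(\bar z(k))-f^\star)<\infty$ a.s., which, combined with (C1), forces $\liminf_k f(\bar z(k))=f^\star$ a.s. Exactly as at the end of the proof of Theorem \ref{thm:2}, the almost sure convergence of $(z_i(k))_{k\ge 0}$ (via $\mathrm{d}(v_i(k),X)\to 0$ a.s.) and the continuity of $f$ provide an a.s.\ limit $x^\ast\in\mathbb{R}^d$ for $(\bar z(k))_{k\ge 0}$; the portmanteau lemma (Proposition \ref{prop:3}) applied to the closed set $X$ gives $x^\ast\in X$ a.s., whence $x^\ast\in X^\star$. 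Transferring this limit back to $\bar v(k)$, then to each $v_i(k)$, and finally to $x_i(k)=v_i(k-1)+e_i(k-1)$ via $\|e_i(k)\|\to 0$, yields the claimed almost sure convergence. The only real obstacle is getting the right form of the one-step inequality without the firm nonexpansivity of a prox, which is resolved by the extra term $\alpha_k^2\|g_i(k)\|^2\le\alpha_k^2 C_i^2$ from (A3)$'$ being absorbed into $N'(\tau)\alpha_k^2$; from that point on the proof is structurally identical to Section \ref{subsec:4.1}.
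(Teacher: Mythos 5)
Your proposal is correct and follows essentially the same route as the paper's own proof: the one-step inequality you derive is exactly the paper's Lemma \ref{lem:3.1} (with $\eta$ reparametrized as $1/\mu$ and the $\alpha_k^2\|g_i(k)\|^2$ terms absorbed via (A3)$'$), and your subsequent steps reproduce Lemmas \ref{lem:3.2}--\ref{lem:3.5} and the endgame of Theorem \ref{thm:2} with the portmanteau lemma. The only differences are immaterial constants (e.g.\ your bound $\|e_i(k)\|^2\le 12\,\mathrm{d}(v_i(k),X)^2+3C_i^2\alpha_k^2$ versus the paper's $4\,\mathrm{d}(v_i(k),X)^2+2C_i^2\alpha_k^2$).
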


Let us compare the distributed random projected gradient algorithm \cite[(2a) and (2b)]{lee2013} with Algorithm \ref{algo:1}.
The algorithm \cite[(2a) and (2b)]{lee2013} is the pioneering distributed optimization algorithm that is based on local communications 
of users' estimates in a network and a gradient descent with random projections.
It can be applied to Problem \ref{prob:1} 
when $f_i$ $(i\in V)$ is convex and differentiable with the Lipschitz gradient $\nabla f_i$ \cite[Assumption 1 c)]{lee2013}.
The algorithm \cite[(2a) and (2b)]{lee2013} is 
\begin{align}\label{lee}
x_i\left( k+1\right) := P_{X_i^{\Omega_i\left(k\right)}} \left(v_i\left(k\right) - \alpha_k \nabla f_i\left(v_i \left(k\right) \right)  \right),
\end{align}
where $v_i(k)$ is defined as in \eqref{viprox}. 
Proposition 1 in \cite{lee2013} indicates that, under the assumptions in Theorem \ref{thm:2}, 
the sequence $(x_i(k))_{k\geq 0}$ $(i\in V)$ generated by algorithm \eqref{lee}
converges almost surely to $x^\star \in X^\star$.
In contrast to algorithm \eqref{lee}, Algorithm \ref{algo:1} can be applied to nonsmooth convex optimization (see Assumption \ref{assum:1}(A2))
by using the subgradients $g_i(k) \in \partial f_i(v_i(k))$ $(i\in V, k\geq 0)$
and enables all users to arrive at the same solution to Problem \ref{prob:1} under the assumptions in Theorem \ref{thm:1}
that are stronger than the ones in Theorem \ref{thm:2}.
In Algorithm \ref{algo:2}, each user sets $x_i(k+1)$ by using its proximity operator (see \eqref{xiprox} for definition of $x_i(k)$ $(k\geq 0)$ in Algorithm \ref{algo:2}) and can solve Problem \ref{prob:1} under the assumptions in Theorem \ref{thm:2} (see Theorem \ref{thm:2}).

\subsection{Proof of Theorem \ref{thm:1}}\label{subsec:3.1}
Let us first show the following lemma.

\begin{lem}\label{lem:3.1}
Suppose that Assumption \ref{assum:1} holds. 
Then, for all $x\in X$, for all $i\in V$, for all $z\in X_i$, for all $k\geq 0$, and for all $\tau, \eta > 0$,
\begin{align*}
\left\| x_i \left(k+1\right) -x  \right\|^2 
&\leq 
\left\| v_i \left(k\right) -x  \right\|^2 -2 \alpha_k \left(f_i\left(z\right) -f_i\left(x\right) \right) 
+ M\left(\tau, \eta\right)\alpha_k^2\\
&\quad + \tau \left\| v_i\left(k\right) - z \right\|^2 + \left( \eta -1 \right) \left\| v_i\left(k\right) - x_i\left(k+1\right)\right\|^2,
\end{align*}
where $M(\tau,\eta)$ is a positive number depending on $\tau$ and $\eta$.
\end{lem}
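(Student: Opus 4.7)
The plan is to mirror the proof of Lemma~\ref{lem:4.1}, replacing the proximity step by the projected subgradient step $x_i(k+1) = P_{X_i^{\Omega_i(k)}}(v_i(k) - \alpha_k g_i(k))$. Fix $x \in X$, $i \in V$, $z \in X_i$, and $k \geq 0$; since $x \in X \subset X_i^{\Omega_i(k)}$, we have $P_{X_i^{\Omega_i(k)}}(x) = x$. Applying the firm nonexpansivity of $P_{X_i^{\Omega_i(k)}}$ at the input $v_i(k) - \alpha_k g_i(k)$ and at $x$ yields
\begin{align*}
\left\|x_i(k+1) - x\right\|^2 \leq \left\|v_i(k) - \alpha_k g_i(k) - x\right\|^2 - \left\|v_i(k) - \alpha_k g_i(k) - x_i(k+1)\right\|^2.
\end{align*}

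Next I would expand the first norm as $\|v_i(k)-x\|^2 - 2\alpha_k\langle v_i(k)-x, g_i(k)\rangle + \alpha_k^2\|g_i(k)\|^2$ and invoke the subgradient inequality at $v_i(k)$, namely $\langle v_i(k)-x, g_i(k)\rangle \geq f_i(v_i(k)) - f_i(x)$ (which holds since $g_i(k) \in \partial f_i(v_i(k))$), to upper-bound the cross term by $2\alpha_k(f_i(x) - f_i(v_i(k)))$. Splitting $f_i(x) - f_i(v_i(k)) = (f_i(x) - f_i(z)) - (f_i(v_i(k)) - f_i(z))$ and applying a second subgradient inequality at $z$ with some $g_i(z) \in \partial f_i(z)$ (which exists by Proposition~\ref{prop:1}(i)), one obtains $-(f_i(v_i(k)) - f_i(z)) \leq \langle z - v_i(k), g_i(z)\rangle$. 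The Cauchy--Schwarz inequality together with $2|a||b| \leq \tau a^2 + (1/\tau)b^2$ then yields $2\alpha_k \langle z - v_i(k), g_i(z)\rangle \leq \tau\|v_i(k) - z\|^2 + (1/\tau)\alpha_k^2\|g_i(z)\|^2$.

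For the subtracted norm, the expansion $\|v_i(k) - x_i(k+1)\|^2 - 2\alpha_k \langle v_i(k) - x_i(k+1), g_i(k)\rangle + \alpha_k^2\|g_i(k)\|^2$ combined with Cauchy--Schwarz and the same Young-type inequality with a fresh parameter $\eta > 0$ applied to the cross term converts the subtracted norm into the upper bound $(\eta - 1)\|v_i(k) - x_i(k+1)\|^2 + (1/\eta - 1)\alpha_k^2\|g_i(k)\|^2$. Collecting all $\alpha_k^2$ contributions then gives $\alpha_k^2\bigl[(1/\tau)\|g_i(z)\|^2 + (1/\eta)\|g_i(k)\|^2\bigr]$; bounding $\|g_i(z)\|^2 \leq M_i^2$ via (A3) (since $z \in X_i$) and $\|g_i(k)\|^2 \leq C_i^2$ via (A3)$'$ furnishes the desired finite constant $M(\tau, \eta) := (1/\tau)\max_{i\in V} M_i^2 + (1/\eta)\max_{i\in V} C_i^2$.

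The main obstacle will be careful sign bookkeeping rather than anything conceptual: after the subtraction, the cross term $2\alpha_k\langle v_i(k) - x_i(k+1), g_i(k)\rangle$ must be controlled from above (not below), which is why Young's inequality with parameter $\eta$ produces $(\eta-1)$ on $\|v_i(k) - x_i(k+1)\|^2$ and $(1/\eta - 1)$ on $\alpha_k^2\|g_i(k)\|^2$. One must also verify that the two $\alpha_k^2\|g_i(k)\|^2$ contributions from the two separate expansions combine cleanly, leaving the coefficient $1/\eta$ rather than something involving the unwanted $-1$. The stronger bound (A3)$'$ is essential here, since $v_i(k)$ is not guaranteed to lie in $X_i$, and (A3) alone would not suffice to control $\|g_i(k)\|$.
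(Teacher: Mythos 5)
Your proposal is correct and follows essentially the same route as the paper's proof: firm nonexpansivity of the projection at $v_i(k)-\alpha_k g_i(k)$ and $x$, the subgradient inequalities at $v_i(k)$ and at $z$, Young's inequality with parameters $\tau$ and $\eta$ for the two cross terms, and the bound $M(\tau,\eta)<\infty$ via (A3)$'$. The only cosmetic difference is that you carry the two $\alpha_k^2\|g_i(k)\|^2$ terms separately and combine them at the end, whereas the paper cancels them immediately after expanding; the resulting coefficient $1/\eta$ is the same.
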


\begin{proof}
Choose $x\in X$ and $i\in V$ arbitrarily and fix an arbitrary $k\geq 0$.
From the firm nonexpansivity of $P_{X_i^{\Omega_i(k)}}$ and $x = P_{X_i^{\Omega_i(k)}}(x)$,
\begin{align*}
\left\| x_i \left(k+1\right) -x  \right\|^2 
&= 
\left\| P_{X_i^{\Omega_i\left(k\right)}} \left(v_i\left(k\right) - \alpha_k g_i\left(k\right)  \right) 
  - P_{X_i^{\Omega_i\left(k\right)}}\left(x\right) \right\|^2\\
&\leq \left\| \left(v_i\left(k\right) -x \right) - \alpha_k g_i\left(k\right) \right\|^2 
  - \left\| \left(v_i\left(k\right) - x_i \left(k+1\right) \right) - \alpha_k g_i\left(k\right)   \right\|^2, 
\end{align*}
which, together with $\|x+y\|^2 = \|x\|^2 + 2\langle x,y\rangle + \|y\|^2$ $(x,y\in \mathbb{R}^d)$, means that 
\begin{align}
\left\| x_i \left(k+1\right) -x  \right\|^2
&\leq \left\|v_i\left(k\right) -x \right\|^2 - 2 \alpha_k \langle v_i\left(k\right) -x, g_i\left(k\right) \rangle 
 - \left\| v_i\left(k\right) - x_i \left(k+1\right)\right\|^2\nonumber\\
&\quad + 2 \alpha_k \langle v_i\left(k\right) -x_i \left(k+1\right), g_i\left(k\right) \rangle.\label{ineq:1} 
\end{align}
Choose $z\in X_i$ arbitrarily and set $g_i(z) \in \partial f_i(z)$. 
Then, the definition of $\partial f_i$ ensures that
\begin{align*}
2 \alpha_k \langle v_i\left(k\right) -x, g_i\left(k\right) \rangle
&\geq 
2 \alpha_k \left( f_i \left(v_i\left(k\right) \right) - f_i \left(x\right) \right)\\
&= 2 \alpha_k \left( f_i \left(z \right) - f_i \left(x\right) \right) 
    + 2 \alpha_k \left( f_i \left(v_i\left(k\right) \right) - f_i \left(z\right) \right)\\
&\geq 2 \alpha_k \left( f_i \left(z \right) - f_i \left(x\right) \right)
    + 2 \alpha_k \left\langle v_i\left(k\right) - z, g_i\left(z\right)  \right\rangle,            
\end{align*}
which, together with the Cauchy-Schwarz inequality and $2|a||b| \leq \tau a^2 + (1/\tau)b^2$ $(a,b\in \mathbb{R},\tau > 0)$
\cite[Inequality (8)]{nedic2011}, implies that, for all $\tau > 0$,
\begin{align}\label{ineq:2}
\begin{split}
2 \alpha_k \langle v_i\left(k\right) -x, g_i\left(k\right) \rangle
&\geq 2 \alpha_k \left( f_i \left(z \right) - f_i \left(x\right) \right)
       - \tau \left\|v_i\left(k\right) - z \right\|^2 - \frac{\alpha_k^2 \left\| g_i\left(z\right) \right\|^2}{\tau}.
\end{split}              
\end{align}
The Cauchy-Schwarz inequality and $2|a||b| \leq \tau a^2 + (1/\tau)b^2$ $(a,b\in \mathbb{R},\tau > 0)$
also mean that, for all $\eta > 0$,
\begin{align}\label{ineq:3}
2 \alpha_k \langle v_i\left(k\right) -x_i \left(k+1\right), g_i\left(k\right) \rangle
\leq \eta \left\| v_i\left(k\right) -x_i \left(k+1\right)\right\|^2 + \frac{\alpha_k^2 \left\|g_i\left(k\right)\right\|^2}{\eta}.
\end{align}
Inequalities \eqref{ineq:1}, \eqref{ineq:2}, and \eqref{ineq:3} imply that, for all $\tau,\eta > 0$,
\begin{align*}
\left\| x_i \left(k+1\right) -x  \right\|^2
&\leq 
\left\|v_i\left(k\right) -x \right\|^2
-2 \alpha_k \left( f_i \left(z \right) - f_i \left(x\right) \right)
       + \tau \left\|v_i\left(k\right) - z \right\|^2\\
&\quad +\left(\eta-1\right) \left\| v_i\left(k\right) - x_i \left(k+1\right)\right\|^2 
   + \left( \frac{\left\| g_i\left(z\right) \right\|^2}{\tau} + \frac{\left\|g_i\left(k\right)\right\|^2}{\eta}\right)\alpha_k^2.
\end{align*}
Since (A3)' ensures that $M (\tau,\eta) := \max_{i\in V} (\sup \{ \| g_i(z)\|^2/\tau + \|g_i(k)\|^2/\eta \colon z\in X_i, k\geq 0 \}) \leq \max_{i\in V}(M_i^2/\tau + C_i^2/\eta) < \infty$, the above inequality completes the proof of Lemma \ref{lem:3.1}.
\end{proof}

Lemma \ref{lem:3.1} leads to the following.

\begin{lem}\label{lem:3.2}
Suppose that Assumptions \ref{assum:1}, \ref{assum:3}, and \ref{assum:5} hold. 
Then, $\sum_{k=0}^\infty \mathrm{d}(v_i(k), X)^2 < \infty$ almost surely for all $i\in V$.
\end{lem}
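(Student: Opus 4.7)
The plan is to follow the blueprint of the proof of Lemma \ref{lem:4.2} but with Lemma \ref{lem:3.1} in place of Lemma \ref{lem:4.1}. Because the subgradient update produces no proximity-operator term, the single-step inequality involves only one extra parameter (essentially $\eta$) rather than two, so the algebra should be slightly simpler than in the proximal case.

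First I would specialize Lemma \ref{lem:3.1} by setting $x = z := P_X(v_i(k))$ for fixed $i \in V$ and $k \ge 0$. The term $-2\alpha_k(f_i(z) - f_i(x))$ vanishes, and $\|v_i(k) - z\|^2$ becomes $\mathrm{d}(v_i(k), X)^2$. Combining this with the standard inequalities $\mathrm{d}(x_i(k+1), X) \le \|x_i(k+1) - P_X(v_i(k))\|$ and $\mathrm{d}(v_i(k), X_i^{\Omega_i(k)}) \le \|v_i(k) - x_i(k+1)\|$, the second of which holds because $x_i(k+1) \in X_i^{\Omega_i(k)}$ by construction \eqref{xi}, yields for any $\tau > 0$ and $\eta \in (0,1)$
$$\mathrm{d}(x_i(k+1), X)^2 \le (1+\tau)\,\mathrm{d}(v_i(k), X)^2 - (1-\eta)\,\mathrm{d}(v_i(k), X_i^{\Omega_i(k)})^2 + M(\tau,\eta)\alpha_k^2.$$

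Next I would take conditional expectation given $\mathcal{F}_k$ and invoke Assumption \ref{assum:3} to replace $\mathrm{E}[\mathrm{d}(v_i(k), X_i^{\Omega_i(k)})^2 \mid \mathcal{F}_k]$ by a lower bound of $(1/c)\,\mathrm{d}(v_i(k), X)^2$, which absorbs the $\mathrm{d}(v_i(k),X)^2$-factor into a single coefficient:
$$\mathrm{E}[\mathrm{d}(x_i(k+1), X)^2 \mid \mathcal{F}_k] \le \mathrm{d}(v_i(k), X)^2 + \left(\tau - \frac{1-\eta}{c}\right)\mathrm{d}(v_i(k), X)^2 + M(\tau,\eta)\alpha_k^2.$$
A concrete choice such as $\tau := 1/(4c)$ and $\eta := 1/2$ makes the bracketed coefficient equal to $-1/(4c) < 0$. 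Summing over $i \in V$ and then applying the convexity of $\mathrm{d}(\cdot, X)^2$ together with Assumption \ref{assum:5}(iii) (the double stochasticity of $W(k)$) to obtain $\sum_{i=1}^m \mathrm{d}(v_i(k), X)^2 \le \sum_{j=1}^m \mathrm{d}(x_j(k), X)^2$ produces an inequality of exactly the form required by the supermartingale convergence theorem.

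Finally I would apply Proposition \ref{prop:2} with $Y_k := \sum_{i=1}^m \mathrm{d}(x_i(k), X)^2$, $Z_k$ a positive multiple of $\sum_{i=1}^m \mathrm{d}(v_i(k), X)^2$, and $W_k := m\,M(1/(4c), 1/2)\alpha_k^2$; the constant in $W_k$ is finite by Assumption (A3)$'$, and $\sum_k W_k < \infty$ by condition (C2). The conclusion $\sum_{k=0}^\infty \mathrm{d}(v_i(k), X)^2 < \infty$ almost surely for every $i \in V$ then follows immediately. The main obstacle is purely algebraic: picking $\tau$ and $\eta$ so that both the coefficient multiplying $\mathrm{d}(v_i(k), X)^2$ becomes strictly negative after using Assumption \ref{assum:3} and the coefficient of $\|v_i(k) - x_i(k+1)\|^2$ in the original bound is nonpositive, which forces $\eta < 1$ and $\tau < (1-\eta)/c$ simultaneously.
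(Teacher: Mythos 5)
Your proposal is correct and follows essentially the same route as the paper's proof: specialize Lemma \ref{lem:3.1} with $x=z:=P_X(v_i(k))$, bound $\|v_i(k)-x_i(k+1)\|^2$ below by $\mathrm{d}(v_i(k),X_i^{\Omega_i(k)})^2$, take conditional expectation and apply Assumption \ref{assum:3}, use convexity of $\mathrm{d}(\cdot,X)^2$ with the double stochasticity of $W(k)$, and finish with the supermartingale convergence theorem and (C2). The only difference is the choice of constants ($\tau=1/(4c)$, $\eta=1/2$ instead of the paper's $\tau=1/(2c)$, $\eta=1/4$), which yields the same negative coefficient $-1/(4c)$ and is immaterial.
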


\begin{proof}
Putting $x = z := P_X (v_i(k))$ $(i\in V, k\geq 0)$ in Lemma \ref{lem:3.1} implies that, 
for all $i\in V$, for all $k\geq 0$, and for all $\tau,\eta > 0$,
\begin{align*}
\left\| x_i \left(k+1\right) - P_X \left(v_i\left(k\right)\right)  \right\|^2 
&\leq 
\left\| v_i \left(k\right) - P_X \left(v_i\left(k\right)\right)  \right\|^2 + \tau \left\| v_i\left(k\right) - P_X \left(v_i\left(k\right)\right) \right\|^2\\
&\quad + \left( \eta -1 \right) \left\| v_i\left(k\right) - x_i\left(k+1\right)\right\|^2 + M\left(\tau, \eta\right)\alpha_k^2.
\end{align*}
The definition of $P_X$ means that $\mathrm{d}(v_i(k),X) = \| v_i(k) - P_X (v_i(k)) \|$ and 
$\mathrm{d}(x_i(k+1), X) \leq \| x_i (k+1) - P_X (v_i(k))\|$ $(i\in V, k\geq 0)$,
Hence, for all $i\in V$, for all $k\geq 0$, and for all $\tau,\eta > 0$,
\begin{align*}
\mathrm{d}\left(x_i\left(k+1\right), X\right)^2 
&\leq 
\mathrm{d}\left(v_i\left(k\right), X\right)^2 + \tau \mathrm{d}\left(v_i\left(k\right), X\right)^2
   + \left( \eta -1 \right) \left\| v_i\left(k\right) - x_i\left(k+1\right)\right\|^2\\
&\quad + M\left(\tau, \eta\right)\alpha_k^2.    
\end{align*}
Moreover, since \eqref{xi} means that $x_i(k+1) \in X_i^{\Omega_i(k)}$ $(i\in V,k\geq 0)$, 
that $\mathrm{d}(v_i(k), X_i^{\Omega_i(k)}) \leq \| v_i(k) - x_i(k+1)\|$ $(i\in V,k\geq 0)$ is found.
Accordingly, for all $i\in V$, for all $k\geq 0$, for all $\tau > 0$, and for all $\eta \in (0,1)$,
\begin{align*}
\mathrm{d}\left(x_i\left(k+1\right), X\right)^2 
&\leq 
\mathrm{d}\left(v_i\left(k\right), X\right)^2 + \tau \mathrm{d}\left(v_i\left(k\right), X\right)^2
   + \left( \eta -1 \right) \mathrm{d}\left(v_i\left(k\right), X_i^{\Omega_i(k)} \right)^2 \\
&\quad + M\left(\tau, \eta\right)\alpha_k^2.    
\end{align*}
Taking the expectation in this inequality conditioned on $\mathcal{F}_k$ defined in \eqref{F} leads to the finding that, 
for all $i\in V$, for all $k\geq 0$, for all $\tau > 0$, and for all $\eta \in (0,1)$, almost surely
\begin{align*}
\mathrm{E} \left[ \mathrm{d}\left(x_i\left(k+1\right), X\right)^2 \Big| \mathcal{F}_k \right]
&\leq 
\mathrm{d}\left(v_i\left(k\right), X\right)^2 + \tau \mathrm{d}\left(v_i\left(k\right), X\right)^2
   + M\left(\tau, \eta\right)\alpha_k^2\\
&\quad + \left( \eta -1 \right) \mathrm{E} \left[ \mathrm{d}\left(v_i\left(k\right), X_i^{\Omega_i(k)} \right)^2 \Big| \mathcal{F}_k \right], 
\end{align*}
which, together with Assumption \ref{assum:3}, implies that, almost surely
for all $i\in V$, for all $k\geq 0$, for all $\tau > 0$, and for all $\eta \in (0,1)$, 
\begin{align*}
\mathrm{E} \left[ \mathrm{d}\left(x_i\left(k+1\right), X\right)^2 \Big| \mathcal{F}_k \right]
&\leq \mathrm{d}\left(v_i\left(k\right), X\right)^2 + \left( \tau  + \frac{\eta -1}{c} \right)\mathrm{d}\left(v_i\left(k\right), X \right)^2
  + M\left(\tau, \eta\right)\alpha_k^2.
\end{align*}
Here, let us take $\tau := 1/(2c)$ and $\eta := 1/4$. 
From $\tau  + (\eta -1)/c = -1/(4c)$ and the convexity of $\mathrm{d}(\cdot,X)^2$, 
almost surely for all $i\in V$ and for all $k\geq 0$,
\begin{align*}
\mathrm{E} \left[ \mathrm{d}\left(x_i\left(k+1\right), X\right)^2 \Big| \mathcal{F}_k \right]
&\leq 
\sum_{j=1}^m \left[W(k)\right]_{ij} \mathrm{d}\left(x_j\left(k\right), X\right)^2 - \frac{1}{4c} \mathrm{d}\left(v_i\left(k\right), X \right)^2\\
&\quad + M\left(\frac{1}{2c}, \frac{1}{4} \right)\alpha_k^2.
\end{align*}
Hence, Assumption \ref{assum:5} ensures that, almost surely for all $k\geq 0$, 
\begin{align}\label{key3}
\begin{split}
\mathrm{E} \left[ \sum_{i=1}^m \mathrm{d}\left(x_i\left(k+1\right), X\right)^2 \Bigg| \mathcal{F}_k \right]
&\leq 
\sum_{j=1}^m  \mathrm{d}\left(x_j\left(k\right), X\right)^2 
 - \frac{1}{4c} \sum_{i=1}^m \mathrm{d}\left(v_i\left(k\right), X \right)^2\\
&\quad + mM\left(\frac{1}{2c}, \frac{1}{4} \right)\alpha_k^2.
\end{split}
\end{align}
Proposition \ref{prop:2} and (C2) lead to $\sum_{k=0}^\infty \sum_{i=1}^m \mathrm{d}(v_i(k), X)^2 < \infty$ almost surely; i.e.,
$\sum_{k=0}^\infty \mathrm{d}(v_i(k), X)^2 < \infty$ almost surely for all $i\in V$.
This completes the proof.
\end{proof}

A discussion similar to the one for proving Lemma \ref{lem:4.4} leads to the following.

\begin{lem}\label{lem:3.4}
Suppose that Assumptions \ref{assum:1}, \ref{assum:4}, and \ref{assum:5} hold, and 
define $\bar{v}(k) := (1/m) \sum_{l=1}^m v_l(k)$ and $e_i(k) := x_i(k+1) - v_i(k)$ for all $k\geq 0$ and for all $i\in V$.
Then, $\sum_{k=0}^\infty \| e_i(k)\|^2 < \infty$ and $\sum_{k=0}^\infty \alpha_k \| v_i(k) - \bar{v}(k)\| < \infty$
almost surely for all $i\in V$.
\end{lem}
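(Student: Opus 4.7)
The plan is to follow the template of Lemma \ref{lem:4.4}, with only the pointwise bound on $\|e_i(k)\|$ modified to reflect the subgradient update \eqref{xi} in place of the proximal update \eqref{xiprox}. Once that one step is in hand, everything else transfers verbatim because the averaging machinery and Lemma \ref{lem:4.3} are agnostic to which projection-based scheme produced the iterates.

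First I would set $z_i(k) := P_X(v_i(k))$ and, using $z_i(k) \in X \subset X_i^{\Omega_i(k)}$ together with the nonexpansivity of $P_{X_i^{\Omega_i(k)}}$, bound
\begin{align*}
\|e_i(k)\|
&\leq \|x_i(k+1) - z_i(k)\| + \|z_i(k) - v_i(k)\| \\
&= \bigl\| P_{X_i^{\Omega_i(k)}}(v_i(k) - \alpha_k g_i(k)) - P_{X_i^{\Omega_i(k)}}(z_i(k)) \bigr\| + \mathrm{d}(v_i(k), X) \\
&\leq \|v_i(k) - z_i(k)\| + \alpha_k \|g_i(k)\| + \mathrm{d}(v_i(k), X) = 2\,\mathrm{d}(v_i(k), X) + \alpha_k \|g_i(k)\|.
\end{align*}
Squaring and using $(a+b)^2 \leq 2(a^2+b^2)$ gives $\|e_i(k)\|^2 \leq 8\,\mathrm{d}(v_i(k), X)^2 + 2 \alpha_k^2 \|g_i(k)\|^2$. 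This is the analogue of the bound derived for the proximal algorithm; the role of the $\|v_i(k) - \mathrm{prox}_{\alpha_k f_i}(v_i(k))\|^2$ term there is now played by $\alpha_k^2 \|g_i(k)\|^2$.

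Next, I would invoke Lemma \ref{lem:3.2} to get $\sum_{k=0}^\infty \mathrm{d}(v_i(k), X)^2 < \infty$ almost surely, and use the uniform bound $\|g_i(k)\| \leq C_i$ from Assumption (A3)' together with (C2) to get $\sum_{k=0}^\infty \alpha_k^2 \|g_i(k)\|^2 \leq C_i^2 \sum_{k=0}^\infty \alpha_k^2 < \infty$. Combining gives $\sum_{k=0}^\infty \|e_i(k)\|^2 < \infty$ almost surely for each $i\in V$. Then $\alpha_k \|e_i(k)\| \leq (1/2)(\alpha_k^2 + \|e_i(k)\|^2)$ yields $\sum_{k=0}^\infty \alpha_k \|e_i(k)\| < \infty$ almost surely, so Lemma \ref{lem:4.3} applies and gives $\sum_{k=0}^\infty \alpha_k \|x_i(k) - x_j(k)\| < \infty$ almost surely for all $i,j\in V$.

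Finally, exactly as in Lemma \ref{lem:4.4}, the doubly stochastic weight assumption \ref{assum:5}(iii) gives $\bar v(k) = (1/m)\sum_{j=1}^m x_j(k)$, and inserting this into the definition of $v_i(k)$ via $w_{ij}(k) \leq 1$ and the convexity of $\|\cdot\|$ yields the uniform bound
\begin{align*}
\|v_i(k) - \bar v(k)\| \leq \frac{1}{m}\sum_{j=1}^m \sum_{l=1}^m \|x_j(k) - x_l(k)\|,
\end{align*}
which combined with the previous finite-sum property concludes $\sum_{k=0}^\infty \alpha_k \|v_i(k) - \bar v(k)\| < \infty$ almost surely. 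The only real place that requires care is the initial bound on $\|e_i(k)\|$: one must be sure to absorb the subgradient step $\alpha_k g_i(k)$ via the triangle inequality rather than firm nonexpansivity, since in the subgradient setting the inner argument $v_i(k) - \alpha_k g_i(k)$ lacks the attracting structure that $\mathrm{prox}_{\alpha_k f_i}(v_i(k))$ enjoys. Assumption (A3)' is precisely what makes this detour cost only a summable $\alpha_k^2 C_i^2$ term.
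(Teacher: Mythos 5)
Your proposal is correct and follows essentially the same route as the paper: the same triangle-inequality bound $\|e_i(k)\| \leq 2\,\mathrm{d}(v_i(k),X) + \alpha_k\|g_i(k)\|$ using nonexpansivity of $P_{X_i^{\Omega_i(k)}}$ and (A3)', then Lemma \ref{lem:3.2} with (C2) for $\sum_k\|e_i(k)\|^2<\infty$, and the same transfer through Lemma \ref{lem:4.3} and the doubly stochastic weights to bound $\|v_i(k)-\bar v(k)\|$ as in Lemma \ref{lem:4.4}. The only difference is an immaterial constant in the squared bound, so nothing further is needed.
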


\begin{proof}
Define $z_i(k) := P_X (v_i(k))$ $(i\in V,k\geq 0)$.
From the triangle inequality, $z_i(k) \in X \subset X_i^{\Omega_i(k)}$,
and the nonexpansivity of $P_{X_i^{\Omega_i(k)}}$ $(i\in V,k\geq 0)$,
\begin{align*}
\left\| e_i\left(k\right) \right\| 
&\leq 
\left\| x_i\left(k+1\right) - z_i\left(k\right)\right\| + \left\| z_i\left(k\right) - v_i\left(k\right)\right\|\\
&= \left\| P_{X_i^{\Omega_i\left(k\right)}} \left(v_i\left(k\right) - \alpha_k g_i\left(k\right)  \right) 
    - P_{X_i^{\Omega_i\left(k\right)}} \left( z_i\left(k\right) \right) \right\|
    + \left\| z_i\left(k\right) - v_i\left(k\right)\right\|\\
&\leq  \left\| \left(v_i\left(k\right) - z_i\left(k\right) \right) - \alpha_k g_i\left(k\right) \right\| 
    + \left\| z_i\left(k\right) - v_i\left(k\right)\right\|.  
\end{align*}
Hence, the triangle inequality and (A3)' ensure that, for all $i\in V$ and for all $k\geq 0$,
$\| e_i (k) \|
\leq 2 \| z_i (k) - v_i(k) \| + C_i \alpha_k$,
which, together with $(a+b)^2 \leq 2(a^2 + b^2)$ $(a,b\in\mathbb{R})$ and the definition of $z_i(k)$, 
implies that, for all $i\in V$ and for all $k\geq 0$,
\begin{align*}
\left\| e_i\left(k\right) \right\|^2
\leq 4 \left\| z_i\left(k\right) - v_i\left(k\right)\right\|^2 + 2 C_i^2 \alpha_k^2 
= 4 \mathrm{d}\left( v_i\left(k\right), X  \right)^2 + 2 C_i^2 \alpha_k^2.
\end{align*}
Accordingly, Lemma \ref{lem:3.2} and (C2) lead to $\sum_{k=0}^\infty \| e_i(k)\|^2 < \infty$
almost surely for all $i\in V$.
The definition of $v_i(k)$ $(i\in V, k\geq 0)$ in Algorithm \ref{algo:1} is the same as in \eqref{viprox}.
Therefore, a discussion similar to the one for proving Lemma \ref{lem:4.4} leads to $\sum_{k=0}^\infty \alpha_k \| v_i(k) - \bar{v}(k)\| < \infty$
almost surely for all $i\in V$. 
This completes the proof.
\end{proof}

\begin{lem}\label{lem:3.5}
Suppose that the assumptions in Theorem \ref{thm:1} are satisfied and define $z_i(k) := P_X(v_i(k))$ for all $i\in V$ and for all $k\geq 0$ and 
$\bar{z}(k) := (1/m) \sum_{i=1}^m z_i(k)$ for all $k\geq 0$.
Then, the sequence $(\| x_i(k) - x^\star \|)_{k\geq 0}$ converges almost surely for all $i\in V$ and for all $x^\star \in X^\star$
and $\liminf_{k\to\infty} f(\bar{z}(k)) = f^\star$ almost surely.
\end{lem}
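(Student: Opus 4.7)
The plan is to mirror the proof of Lemma \ref{lem:4.5} step-for-step, using Lemma \ref{lem:3.1} in place of Lemma \ref{lem:4.1}. First I would apply Lemma \ref{lem:3.1} with $x = x^\star \in X^\star$ and $z = z_i(k) := P_X(v_i(k)) \in X_i$, then sum over $i \in V$ and use Assumption \ref{assum:5}(iii) (double stochasticity of $W(k)$) together with the convexity of $\|\cdot\|^2$ to obtain
\begin{align*}
\sum_{i=1}^m \|v_i(k) - x^\star\|^2 \leq \sum_{i=1}^m \|x_i(k) - x^\star\|^2.
\end{align*}

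Next, the inner product term $f_i(z_i(k)) - f_i(x^\star)$ must be replaced by $f_i(\bar{z}(k)) - f_i(x^\star)$ plus an error involving $\|v_i(k) - \bar{v}(k)\|$. This is exactly the estimate \eqref{fi} in the proof of Lemma \ref{lem:4.5}, whose derivation depends only on $\bar{z}(k) \in X \subset X_i$, Assumption (A3) (the bound $\bar{M} := \max_{i\in V} M_i$ on subgradients at $\bar{z}(k)$), the subgradient inequality, nonexpansivity of $P_X$, and the triangle inequality. So this carries over unchanged. Combining these ingredients and using $\mathrm{d}(v_i(k), X_i^{\Omega_i(k)}) \leq \|v_i(k) - x_i(k+1)\|$ (since $x_i(k+1) \in X_i^{\Omega_i(k)}$), I would take the conditional expectation given $\mathcal{F}_k$ and invoke Assumption \ref{assum:3} to pass from $\mathrm{E}[\mathrm{d}(v_i(k), X_i^{\Omega_i(k)})^2 \mid \mathcal{F}_k]$ to $(1/c) \mathrm{d}(v_i(k), X)^2$.

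The constants are then chosen as in the proof of Lemma \ref{lem:3.2}, namely $\tau := 1/(2c)$ and $\eta := 1/4$, which make the coefficient of $\mathrm{d}(v_i(k), X)^2$ strictly negative. The resulting inequality has the supermartingale form
\begin{align*}
\mathrm{E} \left[ \sum_{i=1}^m \|x_i(k+1) - x^\star\|^2 \,\Big|\, \mathcal{F}_k \right]
\leq \sum_{i=1}^m \|x_i(k) - x^\star\|^2 - 2\alpha_k(f(\bar{z}(k)) - f^\star) + W_k,
\end{align*}
where $W_k = m M(1/(2c),1/4) \alpha_k^2 + 4\bar{M} \alpha_k \sum_{i=1}^m \|v_i(k) - \bar{v}(k)\|$ satisfies $\sum_k W_k < \infty$ almost surely by (C2) and Lemma \ref{lem:3.4}. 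Since $f(\bar{z}(k)) - f^\star \geq 0$ (because $\bar{z}(k) \in X$ by convexity of $X$), Proposition \ref{prop:2} yields the almost sure convergence of $(\sum_i \|x_i(k) - x^\star\|^2)_{k \geq 0}$ (hence of each $(\|x_i(k) - x^\star\|)_{k \geq 0}$) together with $\sum_{k=0}^\infty \alpha_k (f(\bar{z}(k)) - f^\star) < \infty$ almost surely.

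Finally, $\liminf_{k \to \infty} f(\bar{z}(k)) = f^\star$ almost surely follows by contradiction using (C1): if $\liminf_{k\to\infty} (f(\bar{z}(k)) - f^\star) > 0$ with positive probability, there would exist $k_1, \gamma > 0$ with $f(\bar{z}(k)) - f^\star \geq \gamma$ for $k \geq k_1$, forcing $\sum_k \alpha_k (f(\bar{z}(k)) - f^\star) \geq \gamma \sum_{k \geq k_1} \alpha_k = \infty$, contradicting the summability just obtained. I do not anticipate a serious obstacle: the subgradient version is strictly simpler than the proximal version since Lemma \ref{lem:3.1} involves only two parameters $(\tau,\eta)$ rather than three, and every other estimate --- the bound on $\|z_i(k) - \bar{z}(k)\|$, the application of Assumption \ref{assum:3}, the supermartingale step, and the $\liminf$ argument --- transfers verbatim from the proof of Lemma \ref{lem:4.5}.
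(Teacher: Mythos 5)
Your proposal is correct and follows essentially the same route as the paper: sum Lemma \ref{lem:3.1} with $x=x^\star$, $z=z_i(k)$, reuse the estimate \eqref{fi}, apply Assumption \ref{assum:3} after conditioning on $\mathcal{F}_k$ with the same constants $\tau=1/(2c)$, $\eta=1/4$, then invoke the supermartingale convergence theorem together with (C2) and Lemma \ref{lem:3.4}, and finish the $\liminf$ claim by the same (C1)-based contradiction as in Lemma \ref{lem:4.5}. No gaps.
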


\begin{proof}
Choose $x^\star \in X^\star$ arbitrarily.
The convexity of $\|\cdot\|^2$ and Assumption \ref{assum:5} imply that 
$\sum_{i=1}^m \| v_i(k) - x^\star\|^2 \leq \sum_{i=1}^m \| x_i(k) - x^\star\|^2$ $(k\geq 0)$.
Summing the inequality in Lemma \ref{lem:3.1} over all $i$ guarantees that, for all $k\geq 0$ and for all $\tau, \eta > 0$,
\begin{align}
\sum_{i=1}^m \left\| x_i \left(k+1\right) -x^\star  \right\|^2 
&\leq 
\sum_{i=1}^m \left\| x_i \left(k\right) -x^\star  \right\|^2 -2 \alpha_k \sum_{i=1}^m \left(f_i\left(z_i\left(k\right) \right) -f_i\left(x^\star \right) \right)\nonumber\\
&\quad + \tau \sum_{i=1}^m \left\| v_i\left(k\right) - z_i\left(k\right) \right\|^2 
       + \left( \eta -1 \right) \sum_{i=1}^m \left\| v_i\left(k\right) - x_i\left(k+1\right)\right\|^2\nonumber\\
&\quad + m M\left(\tau, \eta\right)\alpha_k^2.\label{ineq:3.5.1}
\end{align}
It can be observed that \eqref{fi} holds for Algorithm \ref{algo:1} because the definitions of $\bar{z}(k)$ and $\bar{v}(k)$ $(k\geq 0)$ in the proof of Theorem \ref{thm:2} are the same as the definitions of $\bar{z}(k)$ and $\bar{v}(k)$ $(k\geq 0)$ in Lemma \ref{lem:3.5}. Therefore, the definitions of $f$ and $f^\star$ imply that, for all $k\geq 0$ and for all $\tau,\eta > 0$,
\begin{align*}
\sum_{i=1}^m \left\| x_i \left(k+1\right) -x^\star  \right\|^2 
&\leq 
\sum_{i=1}^m \left\| x_i \left(k\right) -x^\star  \right\|^2 
+ 2 \bar{M} \alpha_k \sum_{i=1}^m \left\| v_i\left(k\right) - \bar{v}\left(k\right) \right\|\\
&\quad + 2 \bar{M} \alpha_k \sum_{l=1}^m \left\| v_l\left(k\right) - \bar{v}\left(k\right) \right\|
-2 \alpha_k \left(f \left(\bar{z}\left(k\right) \right) -f^\star \right)\\
&\quad + \tau \sum_{i=1}^m \left\| v_i\left(k\right) - z_i\left(k\right) \right\|^2 
       + \left( \eta -1 \right) \sum_{i=1}^m \left\| v_i\left(k\right) - x_i\left(k+1\right)\right\|^2\\
&\quad + m M\left(\tau, \eta\right)\alpha_k^2.
\end{align*}
From $\mathrm{d}(v_i(k),X) = \| v_i(k) - z_i(k) \|$ and 
$\mathrm{d}(v_i(k), X_i^{\Omega_i(k)}) \leq \| v_i(k) - x_i(k+1)\|$ $(i\in V,k\geq 0)$,
for all $k\geq 0$, for all $\tau > 0$, and for all $\eta \in (0,1)$,
\begin{align*}
\sum_{i=1}^m \left\| x_i \left(k+1\right) -x^\star  \right\|^2
&\leq 
\sum_{i=1}^m \left\| x_i \left(k\right) -x^\star  \right\|^2 
+ 4 \bar{M} \alpha_k \sum_{i=1}^m \left\| v_i\left(k\right) - \bar{v}\left(k\right) \right\|\\
&\quad 
-2 \alpha_k \left(f \left(\bar{z}\left(k\right) \right) -f^\star \right) 
+ \tau \sum_{i=1}^m \mathrm{d} \left(v_i\left(k\right), X \right)^2\\ 
&\quad + \left( \eta -1 \right) \sum_{i=1}^m \mathrm{d} \left( v_i\left(k\right), X_i^{\Omega_i\left(k\right)}\right)^2
  + m M\left(\tau, \eta\right)\alpha_k^2. 
\end{align*}
Hence, Assumption \ref{assum:3} guarantees that, almost surely
for all $k\geq 0$, for all $\tau > 0$, and for all $\eta \in (0,1)$,
\begin{align}
\mathrm{E} \left[\sum_{i=1}^m \left\| x_i \left(k+1\right) -x^\star  \right\|^2 \Bigg| \mathcal{F}_k \right]
&\leq 
\sum_{i=1}^m \left\| x_i \left(k\right) -x^\star  \right\|^2 
+ 4 \bar{M} \alpha_k \sum_{i=1}^m \left\| v_i\left(k\right) - \bar{v}\left(k\right) \right\|\nonumber\\
&\quad 
-2 \alpha_k \left(f \left(\bar{z}\left(k\right) \right) -f^\star \right) 
+ \tau \sum_{i=1}^m \mathrm{d} \left(v_i\left(k\right), X \right)^2\nonumber\\ 
&\quad 
+ \frac{\eta -1}{c} \sum_{i=1}^m \mathrm{d} \left( v_i\left(k\right), X \right)^2
+ m M\left(\tau, \eta\right)\alpha_k^2,\label{key4} 
\end{align}
which, together with $\tau := 1/(2c)$, $\eta := 1/4$, and $\tau + (\eta -1)/c = -1/(4c)$, implies that, almost surely for all $k\geq 0$,
\begin{align*}
\mathrm{E} \left[\sum_{i=1}^m \left\| x_i \left(k+1\right) -x^\star  \right\|^2 \Bigg| \mathcal{F}_k \right]
&\leq 
\sum_{i=1}^m \left\| x_i \left(k\right) -x^\star  \right\|^2
-2 \alpha_k \left(f \left(\bar{z}\left(k\right) \right) -f^\star \right)\\ 
&\quad + 4 \bar{M} \alpha_k \sum_{i=1}^m \left\| v_i\left(k\right) - \bar{v}\left(k\right) \right\|
+ m M\left(\frac{1}{2c}, \frac{1}{4}\right)\alpha_k^2,
\end{align*}
where $M (1/(2c), 1/4) < \infty$ holds from (A3)'.
Therefore, Proposition \ref{prop:2}, (C2), and Lemma \ref{lem:3.4} ensure that 
$(\sum_{i=1}^m \| x_i(k) -x^\star \|^2)_{k\geq 0}$ converges almost surely; i.e., 
$(\| x_i(k) -x^\star \|)_{k\geq 0}$ converges almost surely for all $i\in V$.
Moreover, since $\bar{z}(k) \in X$ implies $f(\bar{z}(k)) - f^\star \geq 0$ $(k\geq 0)$,
there is also another finding: almost surely
\begin{align}\label{f}
\sum_{k=0}^\infty \alpha_k \left(f \left(\bar{z}\left(k\right) \right) -f^\star \right) < \infty.
\end{align}
Hence, a discussion similar to the one for proving Lemma \ref{lem:4.5} leads to
$\liminf_{k\to\infty} f(\bar{z}(k)) = f^\star$ almost surely.
This completes the proof.
\end{proof}

We can prove Theorem \ref{thm:1} by referring to the proof of Theorem \ref{thm:2}.

\begin{proof}
By referring to the proof of Theorem \ref{thm:2}, 
the almost sure convergence of $(x_i(k))_{k\geq 0}$ $(i\in V)$ and Lemma \ref{lem:3.2} lead to the conclusion that
$(v_i(k))_{k\geq 0}$, $(z_i(k))_{k\geq 0}$ $(i\in V)$, $(\bar{v}(k))_{k\geq 0}$, and $(\bar{z}(k))_{k\geq 0}$ converge almost surely.
Moreover, Lemma \ref{lem:3.5} and the continuity of $f$ ensure that 
$(\bar{v}(k))_{k\geq 0}$ and $(\bar{z}(k))_{k\geq 0}$ converge almost surely to $x_* \in X^\star$.
By referring to the proof of Theorem \ref{thm:2},
Lemma \ref{lem:3.4}, (C1), and the triangle inequality 
guarantee that $\lim_{k\to\infty} \| v_i(k) - x_*\| = 0$ almost surely for all $i\in V$.
Since Lemma \ref{lem:3.4} ensures that, for all $i\in V$, $\lim_{k\to\infty} \|e_i(k)\|^2 = \lim_{k\to\infty} \| x_i(k+1) - v_i(k) \|^2 = 0$ almost surely,
$(x_i(k))_{k\geq 0}$ $(i\in V)$ converges almost surely to $x_* \in X^\star$.
This completes the proof.
\end{proof}

\subsection{Convergence rate analysis for Algorithm \ref{algo:1}}\label{subsec:3.2}
The following proposition is proven by referring to the discussion in subsection \ref{subsec:3.1}.

\begin{prop}\label{prop:thm:1}
Suppose that the assumptions in Theorem \ref{thm:1} hold, $x^\star \in X^\star$ is a solution to Problem \ref{prob:1}, 
and $(x_i(k))_{k\geq 0}$ ($i\in V$) is the sequence generated by Algorithm \ref{algo:1}. 
Then, there exist $\beta^{(j)} > 0$ ($j=1,2,3,4$) such that, almost surely for all $k\geq 0$,
\begin{align*}
&\mathrm{E} \left[ \sum_{i=1}^m \mathrm{d} \left( x_i\left(k+1\right), X  \right)^2 \Bigg| \mathcal{F}_k  \right]
\leq  \sum_{i=1}^m \mathrm{d} \left( x_i\left(k\right), X  \right)^2 
      - \beta^{(1)} \gamma_k^{(1)} 
      + \beta^{(2)} \gamma_k^{(2)},\\
&\mathrm{E} \left[ \sum_{i=1}^m \left\| x_i\left(k+1\right) - x^\star  \right\|^2 \Bigg| \mathcal{F}_k  \right]
\leq  \sum_{i=1}^m \left\| x_i\left(k\right) - x^\star  \right\|^2 
      - \sum_{j=1,4} \beta^{(j)} \gamma_k^{(j)} 
      + \sum_{j=2,3} \beta^{(j)} \gamma_k^{(j)},
\end{align*}
where $\gamma_k^{(1)} := \sum_{i=1}^m \mathrm{d}(v_i(k),X)^2$,
$\gamma_k^{(2)} := \alpha_k^2$,
$\gamma_k^{(3)} := \alpha_k \sum_{i=1}^m \|v_i(k) - \bar{v}(k)  \|$, and
$\gamma_k^{(4)} := \alpha_k (f(\bar{z}(k)) -f^\star)$ ($k\geq 0$) satisfy $\sum_{k=0}^\infty \gamma_k^{(j)} < \infty$
($j=1,2,3,4$).
\end{prop}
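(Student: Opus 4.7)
The plan is to assemble Proposition \ref{prop:thm:1} directly from the two conditional-expectation inequalities \eqref{key3} and \eqref{key4} already established in the proofs of Lemma \ref{lem:3.2} and Lemma \ref{lem:3.5}, together with the summability statements for the error terms that were obtained as by-products along the way. No new estimates are needed; the proposition is essentially a repackaging of the recursions already derived.

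First I would fix the constants. Inequality \eqref{key3} is exactly the first claim once one sets $\beta^{(1)} := 1/(4c)$ and $\beta^{(2)} := m M(1/(2c), 1/4)$, both of which are positive and finite by Assumption \ref{assum:3} and Assumption (A3)' (the latter guarantees $M(1/(2c),1/4) < \infty$ as noted in the proof of Lemma \ref{lem:3.1}). For the iteration-error inequality, I would take the version of \eqref{key4} obtained by plugging in $\tau := 1/(2c)$ and $\eta := 1/4$, which was the specialization actually used in the proof of Lemma \ref{lem:3.5}; then $\tau + (\eta-1)/c = -1/(4c)$ produces the $-\beta^{(1)}\gamma_k^{(1)}$ term with the same $\beta^{(1)}$, while the remaining terms are $-2\alpha_k (f(\bar z(k)) - f^\star) + 4\bar M \alpha_k \sum_i \|v_i(k)-\bar v(k)\| + mM(1/(2c),1/4)\alpha_k^2$, matching the required form with $\beta^{(4)} := 2$, $\beta^{(3)} := 4\bar M$, and the same $\beta^{(2)}$. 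Here $\bar M := \max_{i\in V} M_i < \infty$ by (A3)'.

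Second I would verify each summability claim in sequence. Lemma \ref{lem:3.2} gives $\sum_{k=0}^\infty \gamma_k^{(1)} = \sum_{k=0}^\infty \sum_{i=1}^m \mathrm{d}(v_i(k),X)^2 < \infty$ almost surely. The summability $\sum_{k=0}^\infty \gamma_k^{(2)} = \sum_{k=0}^\infty \alpha_k^2 < \infty$ is precisely condition (C2) in Assumption \ref{assum:5}. Lemma \ref{lem:3.4} gives $\sum_{k=0}^\infty \gamma_k^{(3)} = \sum_{k=0}^\infty \alpha_k \sum_{i=1}^m \|v_i(k) - \bar v(k)\| < \infty$ almost surely. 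Finally, $\sum_{k=0}^\infty \gamma_k^{(4)} = \sum_{k=0}^\infty \alpha_k (f(\bar z(k)) - f^\star) < \infty$ almost surely is exactly \eqref{f}, which was produced inside the proof of Lemma \ref{lem:3.5} by applying the supermartingale convergence theorem (Proposition \ref{prop:2}) to the iteration-error recursion.

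There is no real obstacle: the heavy lifting (the two recursions and the a.s.\ summability of $\gamma_k^{(4)}$) has already been carried out in subsection \ref{subsec:3.1}, and the proposition is a bookkeeping statement collecting those facts with explicit labels for the rate-controlling quantities. The only mild point to be careful about is that the $\beta^{(1)}$ and $\beta^{(2)}$ appearing in the feasibility-error bound and in the iteration-error bound must be taken as the \emph{same} constants (both arising from the choice $\tau = 1/(2c)$, $\eta = 1/4$), so I would make that identification explicit before concluding.
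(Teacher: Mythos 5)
Your proposal is correct and follows essentially the same route as the paper's own proof: it specializes \eqref{key3} and \eqref{key4} at $\tau = 1/(2c)$, $\eta = 1/4$ to get the same constants $\beta^{(1)} = 1/(4c)$, $\beta^{(2)} = mM(1/(2c),1/4)$, $\beta^{(3)} = 4\bar{M}$, $\beta^{(4)} = 2$, and cites Lemma \ref{lem:3.2}, (C2), Lemma \ref{lem:3.4}, and \eqref{f} for the summability of the $\gamma_k^{(j)}$. Nothing is missing.
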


\begin{proof}
From Lemma \ref{lem:3.2} and (C2),  
$\gamma_k^{(1)} := \sum_{i=1}^m \mathrm{d}(v_i(k),X)^2$ and $\gamma_k^{(2)} := \alpha_k^2$ $(k\geq 0)$ satisfy $\sum_{k=0}^\infty \gamma_k^{(1)} < \infty$ almost surely and $\sum_{k=0}^\infty \gamma_k^{(2)} < \infty$.
Lemma \ref{lem:3.4} and \eqref{f} ensure that  
$\gamma_k^{(3)} := \alpha_k \sum_{i=1}^m \|v_i(k) - \bar{v}(k)  \|$ and
$\gamma_k^{(4)} := \alpha_k (f(\bar{z}(k)) -f^\star)$ $(k\geq 0)$ 
also satisfy 
$\sum_{k=0}^\infty \gamma_k^{(j)} < \infty$ almost surely for $j=3,4$.
Put $\tau := 1/(2c)$ and $\eta := 1/4$, $\beta^{(1)} := - (\tau + (\eta-1)/c) =  1/(4c)$, 
$\beta^{(2)} := m M(\tau, \eta)$,
$\beta^{(3)} := 4 \bar{M}$, 
and 
$\beta^{(4)} := 2$,
where $\beta^{(2)}, \beta^{(3)} < \infty$ hold from (A3)' and $\bar{M} := \max_{i\in V} M_i$.
Accordingly, \eqref{key3} and \eqref{key4} ensure that Proposition \ref{prop:thm:1} holds.
\end{proof}

A discussion similar to the one for obtaining \eqref{rate}, 
Proposition \ref{prop:thm:1}, and Theorem \ref{thm:1} indicate that $(x_i(k))_{k\geq 0}$ $(i\in V)$ in Algorithm \ref{algo:1} converges almost surely to a solution to Problem \ref{prob:1} under the following convergence rates:
almost surely, for all $k\geq 0$,
\begin{align}\label{rate1}
\begin{split}
&\mathrm{E} \left[ \sum_{i=1}^m \mathrm{d} \left( x_i\left(k+1\right), X  \right)^2 \Bigg| \mathcal{F}_k  \right]
\leq \sum_{i=1}^m \mathrm{d} \left( x_i\left(k\right), X  \right)^2 
+ O\left(\alpha_k^2\right),\\
&\mathrm{E} \left[ \sum_{i=1}^m \left\| x_i\left(k+1\right) - x^\star  \right\|^2 \Bigg| \mathcal{F}_k  \right]
\leq  \sum_{i=1}^m \left\| x_i\left(k\right) - x^\star  \right\|^2 
+ O \left(\alpha_k\right).
\end{split}
\end{align}
It can be observed from \eqref{rate} and \eqref{rate1} that Algorithms \ref{algo:2} and \ref{algo:1} have almost the same convergence rate.
Section \ref{sec:5} gives numerical examples such that Algorithms \ref{algo:2} and \ref{algo:1} have almost the same convergence rate
when they have the same step size sequences.

Next, let us consider the case where $f_i$ $(i\in V)$ is convex and differentiable and $\nabla f_i$ $(i\in V)$ satisfies the Lipschitz continuity condition \cite[Assumption 1 c)]{lee2013}. Then, Algorithm \ref{algo:1} coincides with the first distributed random projection algorithm \cite[(2a) and (2b)]{lee2013} (see \eqref{lee}) defined as follows for all $k\geq 0$ and for all $i\in V$:
\begin{align}\label{lee1}
x_i\left( k+1\right) := P_{X_i^{\Omega_i\left(k\right)}} \left(v_i\left(k\right) - \alpha_k \nabla f_i\left(v_i \left(k\right) \right)  \right),
\end{align}
where $v_i(k)$ $(i\in V,k\geq 0)$ is as in \eqref{viprox} and $(\alpha_k)_{k\geq 0}$ satisfies (C1) and (C2).
The proof of Lemma 5 and (18) in \cite{lee2013} indicate that 
algorithm \eqref{lee1}, under (A3), almost surely satisfies the following convergence rates: for a large enough $k$,
\begin{align}\label{rate2}
\begin{split}
&\mathrm{E} \left[ \sum_{i=1}^m \mathrm{d} \left( x_i\left(k+1\right), X  \right)^2 \Bigg| \mathcal{F}_k  \right]
\leq \left( 1 + O \left(\alpha_k^2 \right) \right) \sum_{i=1}^m \mathrm{d} \left( x_i\left(k\right), X  \right)^2 
+ O\left(\alpha_k^2\right),\\
&\mathrm{E} \left[ \sum_{i=1}^m \left\| x_i\left(k+1\right) - x^\star  \right\|^2 \Bigg| \mathcal{F}_k  \right]
\leq  \left( 1 + O \left( \alpha_k^2 \right) \right) \sum_{i=1}^m \left\| x_i\left(k\right) - x^\star  \right\|^2 
+ O \left(\alpha_k\right).
\end{split}
\end{align}
Proposition \ref{prop:thm:1} implies that, if stronger assumption (A3)' is satisfied, 
algorithm \eqref{lee1} satisfies \eqref{rate1} that are better properties for convergence rate than \eqref{rate2}.

\section{Experimental results}\label{sec:5}
Let us apply Algorithms \ref{algo:2} and \ref{algo:1} to Problem \ref{prob:1} with $f_i \colon \mathbb{R}^d \to \mathbb{R}$,
and $X_i \subset \mathbb{R}^d$ $(i\in V := \{ 1,2,\ldots,m \})$ defined by 
\begin{align*}
f_i \left( x  \right) := \sum_{j=1}^d a_{ij} \left| x_j - b_{ij} \right| \text{ and } 
X_i := \bigcap_{j=1}^d \left\{ x\in \mathbb{R}^d \colon \left\| x- c_{ij} \right\| \leq r_{ij} \right\},
\end{align*}
where $a_i := (a_{ij})_{j=1}^d, b_i := (b_{ij})_{j=1}^d, r_i := (r_{ij})_{j=1}^d \in \mathbb{R}_+^d$
$(i\in V)$, and $c_{ij} \in \mathbb{R}^d$ $(i\in V, j=1,2,\ldots,d)$,
i.e., the problem of minimizing the sum of the weighted $L^1$-norms 
$f(x) = \sum_{i=1}^m f_i(x)$ 
over the intersection of closed balls  
$X = \bigcap_{i=1}^m X_i$.
The metric projection onto $X_i^j := \{ x\in \mathbb{R}^d \colon \| x- c_{ij} \| \leq r_{ij} \}$ $(i\in V, j =1,2,\ldots,d)$
can be computed within a finite number of arithmetic operations \cite[Chapter 28]{b-c}.
The function $f_i$ $(i\in V)$ satisfies the Lipschitz continuity condition.
Hence, Assumptions \ref{assum:1} and \ref{assum:3} hold.
The set $X_i^{\Omega_i(k)}$ $(i\in V, k\geq 0)$ used in the experiment was chosen randomly from the sets $X_i^j$ so as to satisfy Assumption \ref{assum:2}.
The subgradient $\partial f_i$ and the proximity operator $\mathrm{prox}_{\alpha f_i}$ $(i\in V, \alpha > 0)$ can be calculated explicitly 
\cite[Lemma 10, (30), (35)]{comb2007}.

\begin{figure}[H]
 \centering
  \includegraphics[width=5.5cm]{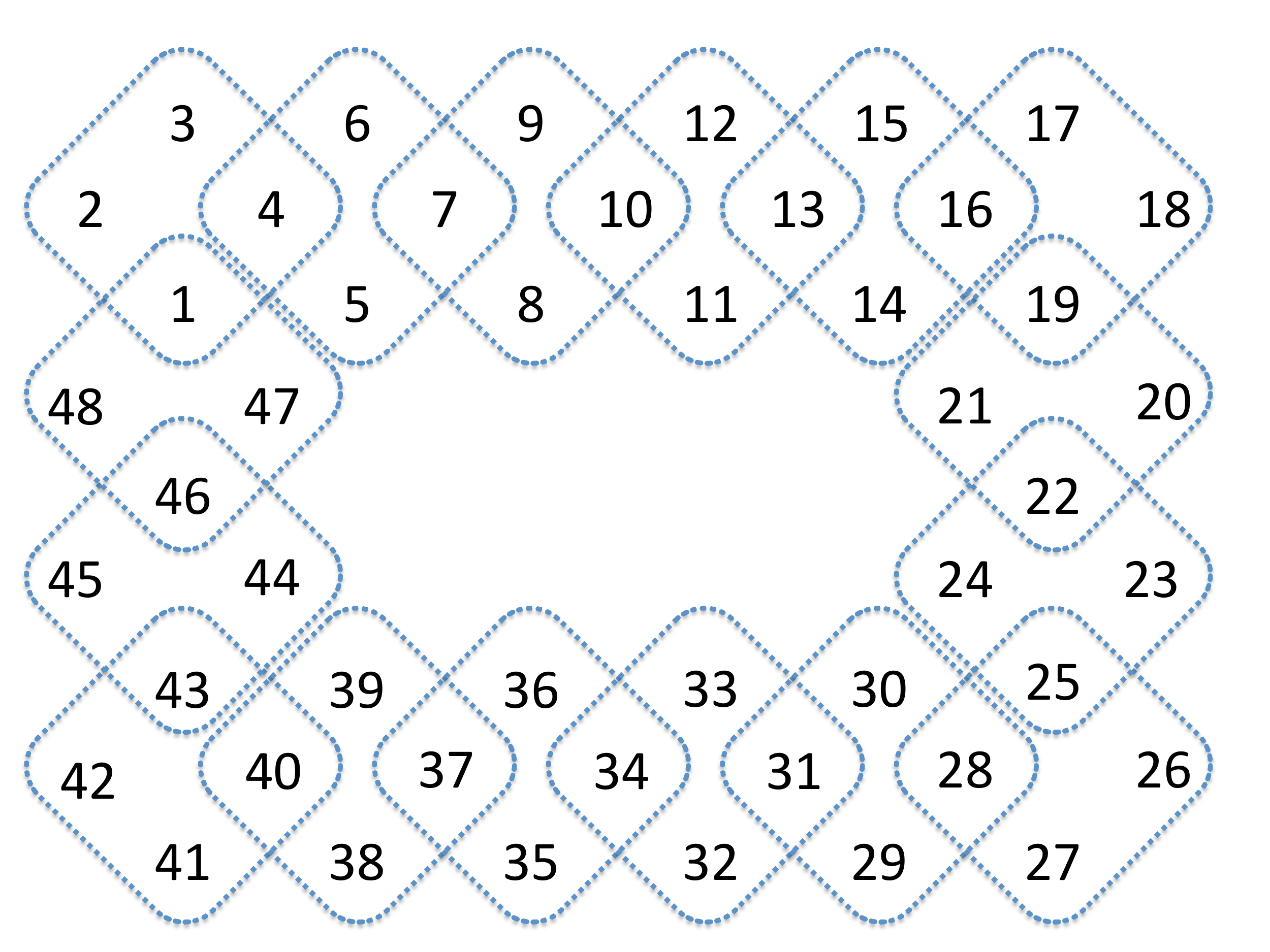}
  \caption{Network model used in experiment (Users within a dotted line can communicate with each other.)}\label{fig:1}
\end{figure}

In the experiment, we used a network with 48 users (i.e., $m:=48$) and 16 subnetworks, as illustrated in Figure \ref{fig:1}. It was assumed that users within a dotted line (all users in each subnetwork) can communicate with each other. For example, user 2 can communicate with users 1, 3, and 4 (i.e., $N_{2}(k) = \{1,2,3,4 \}$), while user 1 can communicate with not only users 2, 3, and 4 but also users 46, 47, and 48 (i.e., $N_1(k) = \{1,2,3,4,46,47,48 \}$). The weighted parameters $w_{ij}(k)$ $(i\in V, j\in N_i(k))$ were set to satisfy Assumption \ref{assum:5} (e.g., user 2 has $w_{2,j}$ $(j\in N_2(k))$ such that $w_{2,j}(k) = w_{j,2}(k) = 3/8$ $(j=2,3)$ and $w_{2,j}(k) = w_{j,2}(k) = 1/8$ $(j=1,4)$, and user 1 has $w_{1,j}$ $(j\in N_1(k))$ such that $w_{1,1}(k) = 2/8$ and $w_{1,j}(k) = w_{j,1} = 1/8$ $(j=2,3,4,46,47,48)$). The point $v_i(k)$ $(i\in V)$ defined in \eqref{viprox} (e.g., $v_2(k) = (3/8)(x_2(k) + x_3(k)) + (1/8)(x_1(k) + x_4(k))$) was computed by passing along $x_j(k)$ $(j\in N_i(k))$ in a prearranged cyclic order.

The computer used in the experiment had two Intel Xeon E5-2640 v3 (2.60 GHz) CPUs. One had 8 physical cores and 16 threads; i.e., the total number of physical cores was 16, and the total number of threads was 32. The computer had 64 GB DDR4 memory and the Ubuntu 14.04.1 (Linux kernel: 3.16.0-30-generic, 64 bit) operating system. The experimental programs were run in Python 3.4.0; Numpy 1.8.2 was used to compute the linear algebra operations. We set $d:= 100$ and $m:=48$ and used 
$a_{i}:= (a_{ij})_{j=1}^d \in (0,1]^d, b_{i}:= (b_{ij})_{j=1}^d \in [0,1)^d, r_{i}:= (r_{ij})_{j=1}^d \in [3,4)^d$ $(i\in V)$,
and
$c_{ij} \in [-\sqrt{(3/4)d}, \sqrt{(3/4)d})^d$ $(i\in V, j=1,2,\ldots,d)$ to satisfy $X \neq \emptyset$ generated randomly by \texttt{numpy.random}. We performed 100 samplings, each starting from different random initial points $x_i(0)$ $(i\in V)$ in the range of $[-2,2]^d$, and averaged the results.

From the discussion in subsections \ref{subsec:4.2} and \ref{subsec:3.2}, it can be expected that 
Algorithms \ref{algo:2} and \ref{algo:1} with small step size sequences have fast convergence.
To see how the choice of step size sequence affects the convergence rate of Algorithms \ref{algo:2} and \ref{algo:1},
we compared Algorithms \ref{algo:2} and \ref{algo:1} when 
$\alpha_k = 1/(k+1)$
with  Algorithms \ref{algo:2} and \ref{algo:1} when
$\alpha_k = 10^{-3}/(k+1)$.

Let us define two performance measures for each $i\in V$ and for all $k\geq 0$, 
\begin{align}\label{FD}
D_i \left(k\right) :=  \left\| x_i \left(k\right) - \prod_{j=1}^d P_{X_i^j} \left(x_i\left(k\right)\right) \right\|
\text{ and }
F_i \left(k\right) :=  f_i \left(x_i \left(k \right) \right),
\end{align}
and observe the behaviors of $D_i(k)$ and $F_i(k)$ for Algorithms \ref{algo:2} and \ref{algo:1} with $\alpha_k = 1/(k+1), 10^{-3}/(k+1)$. If $(D_i(k))_{k\geq 0}$ $(i\in V)$ converges to 0, $(x_i(k))_{k\geq 0}$ converges to a fixed point of $\prod_{j=1}^d P_{X_i^j}$, i.e., to a point in $X_i = \bigcap_{j=1}^d X_i^j$ \cite[Corollary 4.37]{b-c}.
We investigated the behaviors of $(D_i(k))_{k=0}^{1000}$ and $(F_i(k))_{k=0}^{1000}$ $(i=1,2,\ldots,48)$ and observed that $(x_i(k))_{k=0}^{1000}$ $(i=1,2,\ldots,48)$ generated by Algorithms \ref{algo:2} and \ref{algo:1} with $\alpha_k = 10^{-3}/(k+1)$ converge faster than $(x_i(k))_{k=0}^{1000}$ generated by Algorithms \ref{algo:2} and \ref{algo:1} with $\alpha_k = 1/(k+1)$ and that they have almost the same convergence rate when they use the same step size sequences.
We also observed that the sequences of all users generated by both Algorithms \ref{algo:2} and \ref{algo:1} converge to the same point. The details are omitted due to the lack of space.

Let us divide all users into 16 groups,
\begin{align*}
G_{1} := \left\{2,3,4 \right\}, \text{ } G_{2} := \left\{ 5,6,7 \right\}, \ldots,\text{ } G_{16} := \left\{47, 48, 1\right\},
\end{align*} 
and compare the values of the objective functions and constraint evaluations in each of the groups defined by
\begin{align*}
F_{G_{j}} := \sum_{i\in G_{j}} F_i\left(10^3\right) \text{ and } D_{G_{j}} := \sum_{i\in G_{j}} D_i\left(10^3\right) 
\text{ } \left(j=1,2,\ldots,16 \right),
\end{align*}
where $F_i(k)$ and $D_i(k)$ $(i\in V, k \geq 0)$ are defined as in \eqref{FD}.

Table \ref{table:1} shows the values of $F_{G_{j}}$ and $D_{G_{j}}$ $(j=1,2,\ldots,16)$ for Algorithm \ref{algo:2} when $\alpha_k =1/(k+1), 10^{-3}/(k+1)$ and that all $D_{G_{j}}$ generated by Algorithm \ref{algo:2} with $\alpha_k = 10^{-3}/(k+1)$ were smaller than all $D_{G_{j}}$ generated with $\alpha_k = 1/(k+1)$. 
In particular, 
$D_{G_{4}}$, $D_{G_{7}}$, $D_{G_{9}}$, $D_{G_{14}}$, and $D_{G_{16}}$ were dramatically lower with Algorithm \ref{algo:2} with $\alpha_k = 10^{-3}/(k+1)$. It can be seen from Table \ref{table:2} that Algorithm \ref{algo:1} with $\alpha_k = 10^{-3}/(k+1)$ performed better than with $\alpha_k = 1/(k+1)$. This is because $D_{G_{5}}$, $D_{G_{7}}$, $D_{G_{9}}$, $D_{G_{14}}$, and $D_{G_{16}}$ were approximately zero when $\alpha_k = 10^{-3}/(k+1)$ was used. Tables \ref{table:1} and \ref{table:2} also show that all $F_{G_{j}}$ generated by Algorithm \ref{algo:2} were almost the same as all $F_{G_{j}}$ generated by Algorithm \ref{algo:1}.

The analyses in subsections \ref{subsec:4.2} and \ref{subsec:3.2} and the results of the experiment indicate that the rate of convergence of Algorithm \ref{algo:2} is almost the same as that of Algorithm \ref{algo:1} when they have the same step size sequence and that the algorithms are stable and have fast convergence when they have small step size sequences.
 
\section{Conclusion and future work}\label{sec:6}
The problem of minimizing the sum of all users' nonsmooth convex objective functions over the intersection of all users' closed convex constraint sets was discussed, and two distributed algorithms were presented for solving the problem. One algorithm uses each user's proximity operator and metric projection onto a set randomly selected from components of its constraint set while the other is obtained by replacing the proximity operator of the first algorithm with a subgradient. Convergence analysis showed that, under certain assumptions, the sequences of all users generated by each of the two algorithms converge almost surely to the same solution to the problem. It also indicated that the rates of convergence depend on the step size sequences and that it is desired to use small step size sequences so that the algorithms have fast convergence. The results of numerical evaluation using a concrete nonsmooth convex optimization problem support this analysis and demonstrate the effectiveness of the two algorithms.

The proposed algorithms can work when each user randomly sets one metric projection selected from many projections. Since nonexpansive mappings are generalization of metric projections and thus have wider application, developing distributed random algorithms that work when one user randomly chooses one nonexpansive mapping at a time is a promising undertaking.

\section*{Acknowledgements}
I thank Kazuhiro Hishinuma for his input on the numerical examples.

\begin{table}[H]
  \centering
  \caption{Values of $F_{G_{j}} := \sum_{i\in G_{j}} F_i(10^3)$ and $D_{G_{j}} := \sum_{i\in G_{j}} D_i(10^3)$ 
  $(j=1,2,\ldots,16)$ for Algorithm \ref{algo:2} when $\alpha_k =1/(k+1), 10^{-3}/(k+1)$}
  \label{table:1}
  \subcaptionbox{$\alpha_k =1/(k+1)$}{\begin{tabular}{c|cc}
\hline
Group & $F_{G_{j}}$ & $D_{G_{j}}$ \\
\hline
01 & 163.428093 & 0.325050 \\
02 & 158.725168 & 0.301325 \\
03 & 154.834519 & 0.340681 \\
04 & 157.867759 & 0.325576 \\
05 & 169.889020 & 0.342258 \\
06 & 160.626418 & 0.304963 \\
07 & 161.166806 & 0.328647 \\
08 & 148.522159 & 0.342929 \\
09 & 160.980531 & 0.270875 \\
10 & 160.546361 & 0.356931 \\
11 & 162.681703 & 0.326232 \\
12 & 158.329823 & 0.361666 \\
13 & 166.747981 & 0.329523 \\
14 & 157.684287 & 0.253613 \\
15 & 154.678365 & 0.326188 \\
16 & 150.104832 & 0.292431 \\
\hline
\end{tabular}}
  \subcaptionbox{$\alpha_k =10^{-3}/(k+1)$}{\begin{tabular}{c|cc}
\hline
Group & $F_{G_{j}}$ & $D_{G_{j}}$ \\
\hline
01 & 166.805046 & 0.000154 \\
02 & 161.267547 & 0.000816 \\
03 & 157.564483 & 0.000780 \\
04 & 160.620520 & 0.000016 \\
05 & 172.271029 & 0.000024 \\
06 & 162.896308 & 0.006480 \\
07 & 163.472768 & 0.000000 \\
08 & 151.511558 & 0.000712 \\
09 & 163.481266 & 0.000000 \\
10 & 163.301971 & 0.000316 \\
11 & 164.912426 & 0.006555 \\
12 & 160.847494 & 0.022754 \\
13 & 169.300357 & 0.010475 \\
14 & 160.186324 & 0.000000 \\
15 & 156.894949 & 0.000785 \\
16 & 152.450221 & 0.000000 \\
\hline
\end{tabular}}
\end{table}

\begin{table}[H]
  \centering
  \caption{Values of $F_{G_{j}} := \sum_{i\in G_{j}} F_i(10^3)$ and $D_{G_{j}} := \sum_{i\in G_{j}} D_i(10^3)$ 
  $(j=1,2,\ldots,16)$ for Algorithm \ref{algo:1} when $\alpha_k =1/(k+1), 10^{-3}/(k+1)$}
  \label{table:2}
  \subcaptionbox{$\alpha_k:=1/(k+1)$}{\begin{tabular}{c|cc}
\hline
Group & $F_{G_{j}}$ & $D_{G_{j}}$ \\
\hline
01 & 163.488124 & 0.319108 \\
02 & 158.709368 & 0.299227 \\
03 & 154.986973 & 0.332485 \\
04 & 157.928141 & 0.324662 \\
05 & 169.901684 & 0.321779 \\
06 & 160.636388 & 0.308214 \\
07 & 161.228299 & 0.314448 \\
08 & 148.436659 & 0.362904 \\
09 & 160.756450 & 0.266202 \\
10 & 160.449748 & 0.368771 \\
11 & 162.608469 & 0.340017 \\
12 & 158.361547 & 0.347438 \\
13 & 166.913645 & 0.321366 \\
14 & 157.718005 & 0.240146 \\
15 & 154.436379 & 0.348402 \\
16 & 150.187445 & 0.286736 \\
\hline
\end{tabular}}
  \subcaptionbox{$\alpha_k:=10^{-3}/(k+1)$}{\begin{tabular}{c|cc}
\hline
Group & $F_{G_{j}}$ & $D_{G_{j}}$ \\
\hline
01 & 166.798378 & 0.000058 \\
02 & 161.254440 & 0.001169 \\
03 & 157.543728 & 0.001018 \\
04 & 160.614962 & 0.000035 \\
05 & 172.259768 & 0.000000 \\
06 & 162.899171 & 0.006912 \\
07 & 163.458852 & 0.000000 \\
08 & 151.501730 & 0.000813 \\
09 & 163.477941 & 0.000000 \\
10 & 163.295545 & 0.000309 \\
11 & 164.885338 & 0.007128 \\
12 & 160.815178 & 0.025368 \\
13 & 169.319284 & 0.009938 \\
14 & 160.189071 & 0.000000 \\
15 & 156.886623 & 0.001033 \\
16 & 152.430848 & 0.000000 \\
\hline
\end{tabular}}
\end{table}

\end{document}